\documentclass[a4paper,12pt]{amsart}
\usepackage{amsmath}
\usepackage{amssymb}
\usepackage{amscd}
\usepackage{mathrsfs}
\usepackage[dvipdfmx]{graphicx}
\everymath{\displaystyle}
\setlength{\topmargin}{0in}
\setlength{\oddsidemargin}{0in}
\setlength{\evensidemargin}{0in}
\setlength{\textwidth}{6.3in}
\setlength{\textheight}{9.15in}
\setlength{\footskip}{0.25in}
\setlength{\parskip}{3mm}

\usepackage[usenames]{color}
\usepackage[mathlines,pagewise,displaymath]{lineno}

%
\theoremstyle{plain} 
\newtheorem{theorem}{\indent\sc Theorem}[section]
\newtheorem{lemma}[theorem]{\indent\sc Lemma}
\newtheorem{corollary}[theorem]{\indent\sc Corollary}
\newtheorem{proposition}[theorem]{\indent\sc Proposition}

\theoremstyle{definition} 
\newtheorem{definition}[theorem]{\indent\sc Definition}
\newtheorem{remark}[theorem]{\indent\sc Remark}
\newtheorem{example}[theorem]{\indent\sc Example}

\newtheorem{problem}[theorem]{\indent\sc Problem}

\begin{document}

\pagestyle{plain}
\thispagestyle{plain}

\title[Local criteria for non embeddability of Levi-flat manifolds]
{Local criteria for non embeddability of Levi-flat manifolds}

\author[Takayuki KOIKE and Noboru OGAWA]{Takayuki KOIKE$^{1}$ and Noboru OGAWA$^{2}$}
\address{ 
$^{1}$ Department of Mathematics \\
Graduate School of Science \\
Kyoto University \\
Kyoto 606-8502 \\
Japan 
}
\email{tkoike@math.kyoto-u.ac.jp}
\address{
$^{2}$ Department of Mathematics \\
Tokai University  \\
4-1-1 Kitakaname, Hiratsuka-shi,\\
Kanagawa 259-1292 \\
Japan
}
\email{nogawa@tsc.u-tokai.ac.jp}

\begin{abstract}
We give local criteria for smooth non-embeddability of Levi-flat manifolds. 
For this purpose, we pose an analogue of Ueda theory 
on the neighborhood structure of hypersurfaces 
in complex manifolds with topologically trivial normal bundles. 
\end{abstract}

\maketitle

\section{Introduction}

Our interest is a Levi-flat \textit{$C^{\infty}$-smooth} embedding problem: 
Which Levi-flat manifold appear in a complex manifold 
as a Levi-flat $C^{\infty}$-smooth real hypersurface? 
In this paper, we give a local criterion for non-embeddability. 
To approach this problem, we develop an obstruction theory of 
the existence of the ``global" holomorphic defining functions of complex hypersurfaces, 
which is a generalization of (a part of) \textit{the Ueda Theory}. 

Let $M$ be a smooth manifold of dimension $2n+1$ and 
$\mathcal{F}$
a smooth foliation in $M$ of real codimension $1$ with a transversely smooth, 
leafwise complex structure $J_{\mathcal{F}}$. 
The triple $(M,\mathcal{F}, J_{\mathcal{F}})$ is called a Levi-flat manifold. 
From the foliation point of views, 
there are two significant studies of Levi-flat manifolds:
one is the existence and the classification of (abstruct) Levi-flat manifolds. 
The other is the Levi-flat embedding problem. 
In the former direction, 
Meerssman-Verjovsky \cite{MV} dealt with the moduli problem of Levi-flat manifolds under suitable conditions. 
In this paper, we deal with the latter one. 
A real hypersurface foliated by complex immersed hypersurfaces in a complex manifold 
is called a Levi-flat hypersurface.
Basic examples come from real codimension 1 invariant sets of holomorphic foliations. 
Levi-flat hypersurfaces were studied 
in both fields of several complex analysis and holomorphic foliations. 
One of the origin is in the works for the Levi problem due to Grauert 
(See \cite{Oh1}, \cite{Oh2} and references therein). 
On the other hand, 
the study has been developing in connection with the exceptional minimal set conjecture 
(\cite{CLS},\cite{BLM} and \cite{C}). 
The Levi-flat non-embeddablity problem in $\mathbb{C}P^{n}$ is still open for $n=2$.  
Note that this was affirmatively solved for $n \geq 3$, 
that is, it admits no $C^{\omega}$ Levi-flat embeddings (See \cite{LN}, and for the generalization, 
see \cite{Siu}, \cite{Br}, \cite{Oh3}, \cite{Der}, \cite{AB} and references therein).

In such a background, the embedding problem was studied from 
the perspective of 
the topology and dynamics of Levi-flat foliations. 
There are simple topological obstructions. 
For example, if a compact Levi-flat $3$-manifold is realized in a K\"{a}hler surface, then the foliation is taut. 
In particular, Reeb components can not be done (But it is realized in a Hopf surface \cite{Ne}). 
Barrett \cite{B} showed that a Reeb foliation on $S^3$ 
can not be realized in \textit{any} complex surfaces as a Levi-flat $C^{\infty}$-smooth hypersurface. 
Moreover if a Levi-flat $3$-manifold has a torus leaf 
along which the holonomy group is generated by a $C^{\infty}$-flat contracting holonomy, 
then it admits no Levi-flat $C^{\infty}$-embeddings. 
Barrett and Inaba \cite{BI} gave a topological restriction of 
Levi-flat $C^{\infty}$-hypersurfaces in complex surfaces. 
They also constructed a $C^{\infty}$ (but not $C^{\omega}$) 
Levi-flat immersion into $\mathbb{C}^2$. 
 Della Sala \cite{D} showed a higher dimensional analogue of Barrett's theorem. 
Such a non-embeddablity result depends on the theorem of Ueda \cite{U}, 
which we will review in \S \ref{section:review_ueda_theory}. 
Hence, they needed to require the compactness of a leaf. 
Recently, the first author posed an analogue of Ueda's theory \cite{K}. 
In the present paper, 
we refine the argument and apply 
this results 
to the Levi-flat embedding problem. 
The main result is the following: 

\begin{theorem}\label{thm:main}
Let $(M, \mathcal{F},J_{\mathcal{F}})$ be a Levi-flat $5$-manifold, 
$L$ a leaf of $\mathcal{F}$ and 
$C$ an elliptic curve as a complex submanifold of $L$. 
Assume that 
there exists a neighborhood $\mathcal{U}$ of $C$ in $M$ such that 
$(i)$ the holonomy group $\mathcal{H}(\mathcal{U}\cap L)$ is isomorphic to $\mathbb{Z}$, generated by 
a $C^{\infty}$-flat contracting holonomy along an element of $\pi_{1}(C)$, 
$(ii)$ there exists a $C^\infty$-retraction $p\colon \mathcal{U}\to \mathcal{U}\cap L$ 
whose restriction on each leaf is locally biholomorphic and surjective, 
and $(iii)$ there exists a holomorphic function $f$ defined on $\mathcal{U}\cap L$ 
such that $\{f=0\}=C$. 
Then $(M, \mathcal{F},J_{\mathcal{F}})$ does not admit 
a Levi-flat $C^\infty$-embedding into any complex $3$-manifold. 
\end{theorem}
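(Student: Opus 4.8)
The plan is to argue by contradiction: assume that $(M,\mathcal{F},J_{\mathcal{F}})$ admits a Levi-flat $C^{\infty}$-embedding $\iota$ into a complex $3$-manifold $X$, and derive a contradiction by showing that the $C^{\infty}$-flat contracting holonomy in $(i)$ is incompatible with the holomorphic neighborhood structure forced on $C$ by the embedding. After shrinking $\mathcal{U}$, the leaf $L$ becomes an embedded complex hypersurface (a complex surface) of $X$ containing the elliptic curve $C$, and the leaves of $\mathcal{F}$ near $L$ become the nearby complex hypersurfaces of the Levi foliation. First I would organize the transverse data: the contracting, orientation-preserving holonomy in $(i)$ together with the retraction $p$ in $(ii)$ trivializes the real transverse line bundle of $\mathcal{F}$ along $C$, so that the normal bundle $N_{L/X}$ is topologically trivial on a neighborhood of $C$; meanwhile the holomorphic function $f$ in $(iii)$, vanishing to first order exactly on $C$, trivializes $N_{C/L}$ holomorphically. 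This places us precisely in the setting of the analogue of Ueda theory developed in the previous sections, applied to the pair $C\subset L\subset X$.

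Next I would set up the Ueda-type obstruction classes $u_{n}$ along $C$. Using a holomorphic defining function $w$ of $L$ transverse to the leaves (which exists locally because $\mathcal{F}$ is the Levi foliation of the embedded $M$) together with $f$ and the elliptic-curve coordinate on $C$, I would expand the transition data between local holomorphic defining functions of $L$ and read off the classes $u_{n}$ in the appropriate cohomology $H^{1}(C,N_{L/X}^{-n}|_{C})$ (tensored with powers of $N_{C/L}$ as dictated by the theory of \S\ref{section:review_ueda_theory}). The content of $(ii)$ is that $p$ transports the leafwise complex structure of $L$ to the nearby leaves biholomorphically, so the $n$-th order deviation of the foliation from the flat product model is measured, on the one hand, by $u_{n}$, and on the other hand by the $n$-th order behaviour of the holonomy germ along $\gamma\in\pi_{1}(C)$.

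The main step, and the principal obstacle, is to show that all the classes $u_{n}$ vanish, i.e.\ that $C$ is of \emph{infinite type}. Here the crucial observation is that the obstruction classes $u_{n}$ are holomorphic objects on the compact curve $C$, whereas the discrepancy feeding into them is governed by the holonomy, which by $(i)$ differs from its linearization by a $C^{\infty}$-flat term. A holomorphic section (or $\bar\partial$-cohomology class) that is flat is identically zero; thus the flatness of the contraction forces $u_{n}=0$ for every $n$. Carrying this out rigorously is the heart of the matter: one must use $(ii)$ to express each $u_{n}$ through genuinely leafwise-holomorphic quantities whose transverse dependence is inherited from the holonomy, and then invoke compactness of $C$ together with the triviality of $N_{C/L}$ from $(iii)$ to conclude that the flat transverse behaviour annihilates each cohomology class. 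I expect the delicate points to be the bookkeeping of how powers of the transverse coordinate pair with $H^{1}(C,N_{L/X}^{-n}|_{C})$, and the control of convergence so that the formal vanishing is honest.

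Finally, once $C$ is shown to be of infinite type with topologically trivial $N_{L/X}$ and trivial $N_{C/L}$, I would invoke the structure theorem of the Ueda-type theory reviewed in \S\ref{section:review_ueda_theory}: the neighborhood of $C$ in $X$ is then holomorphically the flat model, and in particular $\mathcal{F}$ admits a holomorphic first integral near $C$, namely a holomorphic defining function $w$ of $L$ whose level sets are the leaves. But a holomorphic first integral makes the leaves well defined as global hypersurfaces near $C$, so the holonomy around any loop, and in particular along $\gamma$, is trivial. This contradicts the assumption in $(i)$ that the holonomy is a nontrivial contraction. Hence no Levi-flat $C^{\infty}$-embedding into a complex $3$-manifold exists.
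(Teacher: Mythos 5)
Your overall architecture (reduce to the neighborhood of $C$ in $X$, use the $C^{\infty}$-flatness of the holonomy to kill the Ueda-type obstructions, then apply the codimension-$2$ Ueda theorem to produce a global holomorphic defining function $h$ of $L$ near $C$) matches the paper up to and including Theorem \ref{thm:ueda_nbhd}. But your final step contains a genuine gap. The $U(1)$-flatness of $\mathcal{O}_X(L)$ near $C$ produces a holomorphic function $h$ on a neighborhood $W$ of $C$ in $X$ with divisor $L\cap W$; it does \emph{not} produce a holomorphic first integral of $\mathcal{F}$. The level sets $\{h=c\}$ for $c\neq 0$ are complex hypersurfaces of $X$ that have no reason to lie in $M$, and $h$ restricted to a nearby leaf $L'$ of $\mathcal{F}$ is in general a \emph{nonconstant} holomorphic function. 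So the inference ``flat model $\Rightarrow$ leaves are level sets of $h$ $\Rightarrow$ trivial holonomy'' does not go through. The actual contradiction (following Barrett) is extracted precisely from this nonconstancy: one normalizes $h$ so that $d(\mathrm{Re}\,h|_M)\neq 0$ along $L$ (Lemma \ref{lem:function_h}), picks a leaf $L'$ accumulating on $L$ and sets $A:=p^{-1}(E)\cap L'$, which by the contracting holonomy is an annulus or punctured disc accumulating on the elliptic curve $E$; then $\mathrm{Re}\,h|_A$ is a positive harmonic function tending to $0$ at the accumulating end, and the maximum principle (after compactifying the punctured disc, or via the Laurent expansion in the annulus case) forces $h|_A\equiv 0$, contradicting $d(\mathrm{Re}\,h|_M)\neq 0$. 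This is where compactness of $E$ and the contraction are actually used in the endgame, and it cannot be replaced by a triviality-of-holonomy argument.

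Two smaller points. First, Theorem \ref{thm:ueda_nbhd} requires not only ${\rm type}(C,S,X)=\infty$ but also that $f$ be of \emph{extension type infinity} (vanishing of the classes $v_{n,m}$ attached to extensions $\{z_j\}$ of $\{y_j\}$); your proposal only tracks the classes $u_n$, so the input to the key theorem is incomplete, and the well-definedness of all these vanishing conditions must itself be established via the jet extension property (Proposition \ref{prop:jet_ext_welldef}), which is where hypothesis $(ii)$ enters. Second, hypothesis $(iii)$ only gives $\{f=0\}=C$, not ${\rm div}(f)=C$, so $f$ need not vanish to first order; the paper handles this either by passing to $f^{1/\lambda}$ in the proof of Theorem \ref{thm:ueda_nbhd} or by viewing $f$ as an elliptic fibration and replacing $C$ with a general fiber.
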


We emphasize that, in Theorem \ref{thm:main}, the compactness assumption is required only for $C$ 
and thus we can apply our result to foliations without compact leaves. 
Thus we obtain a {\it local} criterion for embeddings in the sense of an arbitrary small neighborhood of $C$. 
For example, we can deduce from 
Theorem \ref{thm:main}
that the Levi-flat manifold $(S^3, \mathcal{F}_{\rm Reeb},J_{\mathcal{F}})\times \mathbb{C}$ 
 (or $\mathbb{D}$) does not admit a Levi-flat $C^\infty$-embedding into any complex 3-manifold (Example \ref{eg:reeb_times_C}). 
See \S \ref{section:eg} for further examples. 
As combining Theorem \ref{thm:main} and \cite[Example 1]{BI}, we obtain the corollary below. 
This seems to be significant in Levi-flat studies about 
not only embedding problems but also moduli problems as mentioned at the beginning of this introduction. 

\begin{corollary}\label{cor:main}
There exist Levi-flat 5-manifolds $(M_{i},\mathcal{F}_{i},J_{\mathcal{F}_{i}})$ for $i=1,2$ 
which satisfy the followings: 
(i) $(M_{1},\mathcal{F}_{1},J_{\mathcal{F}_{1}})$ does not admit 
a Levi-flat $C^\infty$-embedding into any complex 3-manifold, 
(ii) $(M_{2},\mathcal{F}_{2},J_{\mathcal{F}_{2}})$ admits a Levi-flat $C^\infty$-embedding into $\mathbb{C}^3$, and 
(iii) $(M_{1},\mathcal{F}_{1})$ and $(M_{2},\mathcal{F}_{2})$ are diffeomorphic as $C^{\infty}$-foliated manifolds. 
\end{corollary}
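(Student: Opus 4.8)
The plan is to prove Corollary~\ref{cor:main} by exhibiting a single
diffeomorphism type of foliated $5$-manifold carrying two distinct
leafwise complex structures, one embeddable and one not. The natural
candidate for the underlying foliated manifold is
$(S^3,\mathcal{F}_{\mathrm{Reeb}})\times T^2$, where $T^2$ is a real
two-torus; indeed, the starting point is \cite[Example~1]{BI}, which
constructs a Levi-flat $C^\infty$-embedding of some Levi-flat structure
on $(S^3,\mathcal{F}_{\mathrm{Reeb}})\times T^2$ into $\mathbb{C}^3$
(after recognizing $T^2$ as an elliptic curve). That supplies
$(M_2,\mathcal{F}_2,J_{\mathcal{F}_2})$ together with property~(ii)
essentially for free.

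First I would fix the smooth foliated manifold
$(M,\mathcal{F})=(S^3,\mathcal{F}_{\mathrm{Reeb}})\times T^2$ and describe
its two compact leaves: each is a copy of $T^3=T^2\times T^2$ coming from
the product of the toral leaf of the Reeb foliation with the $T^2$
factor. The idea is to equip this common foliated manifold with a
\emph{second} leafwise complex structure $J_{\mathcal{F}_1}$ so that the
hypotheses of Theorem~\ref{thm:main} are met. Concretely, on the compact
leaf $L\cong T^3$ I would choose a complex structure making $L$ an
elliptic surface (or abelian surface) that contains an elliptic curve
$C$ — the $T^2$-factor — as a complex submanifold admitting a global
defining function $f$ with $\{f=0\}=C$, thereby arranging
hypotheses~(iii). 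The transverse data is governed by the holonomy of the
Reeb component along its torus leaf; I would install the leafwise complex
structure so that the holonomy of $\mathcal{U}\cap L$ is generated by a
$C^\infty$-flat contracting map along a chosen element of $\pi_1(C)$,
giving hypothesis~(i), and so that the obvious product projection yields
the fiberwise-biholomorphic retraction $p$ of hypothesis~(ii). Then
Theorem~\ref{thm:main} applies and produces
$(M_1,\mathcal{F}_1,J_{\mathcal{F}_1})$ with property~(i).

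For property~(iii) I would simply note that $M_1$ and $M_2$ share the
\emph{same} underlying smooth foliated manifold
$(S^3,\mathcal{F}_{\mathrm{Reeb}})\times T^2$ by construction, so the
identity is the required $C^\infty$-foliated diffeomorphism; only the
leafwise complex structures $J_{\mathcal{F}_1}$ and $J_{\mathcal{F}_2}$
differ. This is the whole point of the corollary: embeddability is not a
property of the foliated manifold alone but genuinely depends on the
transverse-holomorphic data.

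I expect the main obstacle to be the simultaneous realization of all
three hypotheses of Theorem~\ref{thm:main} by a single honest leafwise
complex structure on the Reeb-times-torus manifold — in particular,
reconciling the $C^\infty$-flat contracting holonomy coming from the Reeb
component (hypothesis~(i)) with the existence of the global holomorphic
defining function $f$ on $\mathcal{U}\cap L$ (hypothesis~(iii)) and the
leafwise-biholomorphic retraction $p$ (hypothesis~(ii)). Verifying that
the Reeb holonomy can be taken $C^\infty$-flat and contracting while
keeping $J_{\mathcal{F}_1}$ transversely smooth will likely require an
explicit model near the torus leaf, so I would carry out the construction
in local coordinates adapted to the Reeb component and then check that
the chosen $C$ and $f$ are compatible with the transverse structure; the
embeddable companion $(M_2,\mathcal{F}_2,J_{\mathcal{F}_2})$ is comparatively
routine once \cite[Example~1]{BI} is invoked.
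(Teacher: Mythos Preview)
Your plan has a genuine gap on the embeddable side. You propose the common underlying foliated manifold to be $(S^3,\mathcal{F}_{\mathrm{Reeb}})\times T^2$, but this choice makes property~(ii) impossible for \emph{any} leafwise complex structure. The Reeb foliation has a compact torus leaf, so your product foliation has a compact leaf $T^2\times T^2$ (you wrote $T^3$, but it is the real $4$-torus). If $(M_2,\mathcal{F}_2,J_{\mathcal{F}_2})$ were Levi-flat embedded in $\mathbb{C}^3$, that compact leaf would sit in $\mathbb{C}^3$ as a closed complex surface; since $\mathbb{C}^3$ is Stein it carries no positive-dimensional compact complex subvarieties, a contradiction. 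So no Levi-flat structure on $(S^3,\mathcal{F}_{\mathrm{Reeb}})\times T^2$ embeds in $\mathbb{C}^3$, and the candidate for $M_2$ cannot come from this foliated manifold. Relatedly, \cite[Example~1]{BI} is not what you describe: it is a $3$-dimensional Levi-flat hypersurface in $\mathbb{C}^2$ whose leaves are annuli and planes (the leaf carrying the $C^\infty$-flat contracting holonomy is a punctured disc, not a torus), and after a small modification it is an embedding rather than only an immersion.

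The paper avoids this obstruction by working with a \emph{non-compact} foliated manifold. For $M_1$ it takes $L=C\times\mathbb{C}$ with $C$ an elliptic curve and suspends a representation $\rho\colon\pi_1(L)\cong\mathbb{Z}^2\to\mathrm{Diff}^\infty_+(\mathbb{R})$ sending one generator to a $C^\infty$-flat contracting germ and the other to the identity; the resulting foliated $\mathbb{R}$-bundle is diffeomorphic to $T^2\times\mathbb{R}^2\times\mathbb{R}$, and Theorem~\ref{thm:main} applies with $C\subset L$ and the obvious projection and coordinate function. For $M_2$ it takes the Barrett--Inaba $3$-dimensional example $(N,\mathcal{G},J_{\mathcal{G}})\hookrightarrow\mathbb{C}^2$ and forms $(N,\mathcal{G},J_{\mathcal{G}})\times(\mathbb{C}^*,J_{\mathrm{st}})\hookrightarrow\mathbb{C}^3$. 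A suitable choice of $\rho$ then makes $(M_1,\mathcal{F}_1)$ and $(M_2,\mathcal{F}_2)$ diffeomorphic as $C^\infty$-foliated manifolds. The key point you missed is that the embeddable example must have no compact leaves, so the common diffeomorphism type cannot be built on $S^3$.
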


Theorem \ref{thm:main} can be obtained from the following: 

\begin{theorem}\label{thm:main_higher_dim}
Let $(M, \mathcal{F},J_{\mathcal{F}})$ be a Levi-flat $(2n+1)$-manifold, 
$L$ a leaf of $\mathcal{F}$, 
$C$ a compact hypersurface of $L$ containing 
an elliptic curve $E$ as a submanifold. 
Assume that 
there exists a neighborhood $\mathcal{U}$ of $C$ in $M$ such that 
$(i)$ the holonomy group $\mathcal{H}(\mathcal{U}\cap L)$ is isomorphic to $\mathbb{Z}$, generated by 
a $C^{\infty}$-flat contracting holonomy along an element of $\pi_{1}(E)$, 
$(ii)$ there exists a $C^\infty$-retraction $p\colon \mathcal{U}\to \mathcal{U}\cap L$ 
whose restriction on each leaf is locally biholomorphic and surjective, 
and $(iii)$ there exists 
a holomorphic function $f$ defined on $\mathcal{U}\cap L$ such that ${\rm div}(f)=C$.
Then $(M, \mathcal{F},J_{\mathcal{F}})$ does not admit 
a Levi-flat $C^\infty$-embedding into any complex $(n+1)$-manifold. 
\end{theorem}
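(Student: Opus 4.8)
The plan is to argue by contradiction, supposing that $(M,\mathcal{F},J_{\mathcal{F}})$ does admit a Levi-flat $C^{\infty}$-embedding into a complex $(n+1)$-manifold $X$. Then a neighborhood of the image of $C$ becomes a real hypersurface of $X$ foliated by complex leaves, and the leaf $L$ through $C$ sits inside $X$ as (an open piece of) a complex $n$-dimensional submanifold; the compact hypersurface $C\subset L$ thereby becomes a compact complex hypersurface of $X$. Because the holonomy along $\pi_1(E)$ is a $C^{\infty}$-flat contracting map, the leaves accumulating on $L$ from one side give us a $C^{\infty}$-family of complex hypersurfaces degenerating to $C$, and via the retraction $p$ in hypothesis $(ii)$ I would organize these into a system of local defining functions. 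I expect this to produce a neighborhood of $C$ in $X$ together with a formal/transverse expansion of a defining function whose leading behavior is controlled by the flat holonomy.

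The heart of the matter is to set up the Ueda-type obstruction: using the defining function $f$ on $\mathcal{U}\cap L$ with $\mathrm{div}(f)=C$ from hypothesis $(iii)$, together with the locally biholomorphic retraction, I would build a family of local holomorphic defining functions $\{w_j\}$ for the hypersurface $C$ inside $X$, one on each member of a finite cover $\{U_j\}$ of a neighborhood of $C$. On overlaps these satisfy $w_i = t_{ij}w_j + O(w_j^2)$ for holomorphic transition data $t_{ij}$ defining the (topologically trivial) normal bundle $N_{C/X}$, and the obstruction to matching them up to higher order lives in $H^1(C, N_{C/X}^{\otimes k})$ for successive $k$. This is exactly the setting of the analogue of Ueda theory reviewed in \S\ref{section:review_ueda_theory}; the key computation is to identify the first nonvanishing Ueda-type obstruction class and to show it is forced to be nonzero precisely because the holonomy is $C^{\infty}$-flat contracting. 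The flatness guarantees that the transverse coordinate decays faster than any power along $E$, which is incompatible with the existence of a nontrivial holomorphic (or even finitely-determined) linearization of the neighborhood structure, and this incompatibility is what yields the contradiction.

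The main reduction I would use is to restrict attention to the elliptic curve $E\subset C$: since the flat contracting holonomy is taken along an element of $\pi_1(E)$, the relevant obstruction on the full hypersurface $C$ pulls back to, or is detected on, the neighborhood of $E$, where $N_{E/X}$ is a degree-zero (hence topologically trivial) line bundle over an elliptic curve. On such a line bundle the cohomology groups $H^1(E, N_{E/X}^{\otimes k})$ behave dichotomously according to whether $N_{E/X}^{\otimes k}$ is trivial or not (torsion versus non-torsion, or the Diophantine nature of the defining character), and the Ueda classification distinguishes these cases. I would show that the embedding would force a particular finite type (Ueda type in the terminology of \S\ref{section:review_ueda_theory}), and then derive that a $C^{\infty}$-flat holonomy cannot coexist with any such finite Ueda type—intuitively, a genuine holomorphic neighborhood has at most polynomial/geometric transverse decay dictated by its Ueda type, whereas flatness demands faster-than-any-power decay.

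The hard part will be the transition from the real, merely $C^{\infty}$ data of the foliation and its holonomy to the holomorphic obstruction classes: one must show that the abstract leafwise complex structure $J_{\mathcal{F}}$ together with the embedding actually produces honest holomorphic defining functions whose overlap comparison lands in the complex-analytic cohomology of $C$ (or $E$), rather than only in some smooth or formal category. Controlling the error terms in the expansion $w_i = t_{ij}w_j + \cdots$ so that the Ueda obstruction is well-defined and genuinely obstructs—and simultaneously extracting from the $C^{\infty}$-flatness a quantitative statement that contradicts the decay allowed by any finite Ueda type—is where the analogue of Ueda theory from \cite{K} and \S\ref{section:review_ueda_theory} must be invoked with care. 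I expect the estimate comparing the flat contraction rate against the geometric decay governed by the first nonzero obstruction class to be the decisive and most delicate step.
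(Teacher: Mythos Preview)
Your proposal has the logical arrow of the Ueda-type argument pointing in the wrong direction, and it misses the mechanism that actually produces the contradiction.

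In the paper's proof, the $C^{\infty}$-flat holonomy along $L$ is used (via Proposition~\ref{prop:BAppend} and the jet extension property, Proposition~\ref{prop:jet_ext_welldef}) to show that \emph{all} the obstruction classes $u_{n,m}(C,L,X)$ and $v_{n,m}(C,L,X)$ vanish, i.e.\ the triple $(C,L,X)$ is of \emph{infinite} type and $f$ is of extension type infinity (Lemma~\ref{lem:key}). This is the opposite of what you write: you claim one should ``identify the first nonvanishing Ueda-type obstruction class and show it is forced to be nonzero precisely because the holonomy is $C^{\infty}$-flat contracting,'' and that ``the embedding would force a particular finite type.'' Neither statement is correct---flatness kills the obstructions, it does not create one, and there is no mechanism by which a mere embedding would force finite type.

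Once infinite type is established, the paper invokes Theorem~\ref{thm:ueda_nbhd} (the codimension-$2$ analogue of Ueda's Theorem~3) to conclude that $\mathcal{O}_X(L)$ is $U(1)$-flat on a neighborhood of $C$, yielding a \emph{global holomorphic defining function} $h$ of $L$ with $d(\mathrm{Re}\,h|_M)\neq 0$ along $L$ (Lemma~\ref{lem:function_h}). Note that the object defined is the leaf $L$, not $C$; your proposal speaks of defining functions for the codimension-$2$ submanifold $C\subset X$ and of $N_{C/X}$, which is not the relevant bundle. The contradiction then arises not from any ``decay rate'' comparison but from elementary harmonic function theory: on a nearby leaf $L'$, the set $A=p^{-1}(E)\cap L'$ is an annulus accumulating on $E$ (thanks to the contracting holonomy and the retraction $p$), and $\mathrm{Re}\,h|_A$ is a positive harmonic function tending to $0$ at one end. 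The maximum principle (or a Laurent expansion argument if $A$ is a genuine annulus) then forces $h|_A\equiv 0$, contradicting $d(\mathrm{Re}\,h|_M)\neq 0$. This Barrett-style endgame is entirely absent from your outline and is the step that actually uses the \emph{contracting} nature of the holonomy and the elliptic curve $E$.
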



The outline of the proof of 
Theorem \ref{thm:main_higher_dim} is based on that of \cite[Theorem 1]{B}. 
Assume that there exists a Levi-flat $C^\infty$-embedding of $(M, \mathcal{F})$ 
into a complex manifold $X$. 
We will show the existence of a suitable holomorphic function $h$ 
defined on an open neighborhood $W$ of $C$ in $X$ 
and lead to the contradiction by applying the maximum principle to the harmonic function ${\rm Re}\,h$. 
In the proof of \cite[Theorem 1]{B}, the existence of $h$ 
is shown by 
Ueda's theory on a neighborhood structure of 
a complex hypersurface $S$ in $X$ with the $U(1)$-flat normal bundle (i.e. $N_{S/X}\in H^1(S, U(1))$). 
Ueda classified the pair $(S, X)$ by using so-called {\it Ueda type} ``${\rm type}(S, X)$" and gave a sufficient condition for the line bundle $\mathcal{O}_X(S)$ to be $U(1)$-flat on an open neighborhood of $S$ (\cite[Theorem 3]{U}, see also Theorem \ref{thm:ueda_3}). 
However, in our case, one cannot apply Ueda's theorem because $L$ need not to be compact. 
In order to show the existence of $h$, we will use a codimension-2 analogue of Ueda's theory. 
Let $X$ be a complex manifold, $S$ a complex hypersurface of $X$, 
and $C$ a compact complex hypersurface of $S$ 
such that $N_{S/X}$ is $U(1)$-flat on an open neighborhood of $C$ in $S$. 
In \cite{K}, the first author posed the codimension-2 analogue of 
the notion of Ueda type (``${\rm type}(C, S, X)$", \cite[\S 3.1]{K}, 
see also \S \ref{section:review_ueda_theory})
and gave a sufficient condition for the line bundle $\mathcal{O}_X(S)$ 
to be $U(1)$-flat on a neighborhood of $C$ (\cite[Theorem 1]{K}). 
However, recently we found some mistakes in the proof of \cite[Theorem 1]{K}. 
In the present paper, we pose the notion ``the extension type", which also reflects a neighborhood structure of $C$ (see Definition \ref{def:infinexttype}), and show the following theorem as a corrected version of \cite[Theorem 1]{K}: 

\begin{theorem}\label{thm:ueda_nbhd}
Let $X$ be a complex manifold, $S$ a complex non-singular hypersurface of $X$, 
and $C$ a compact complex non-singular hypersurface of $S$. 
Assume that (i) $N_{S/X}$ is $U(1)$-flat and torsion on an open neighborhood $V$ of $C$ in $S$,  
(ii) there exists a holomorphic function $f$ 
defined on $V$ such that ${\rm div}(f)=C$, and  
(iii) ${\rm type}(C, S, X)=\infty$ and $f$ is of extension type infinity. 
Then  $\mathcal{O}_X(S)$ is $U(1)$-flat on an open neighborhood $W$ of $C$ in $X$. 
Moreover, 
there exists a complex hypersurface $Y$ of $W$ which intersects $S$ transversally along $C$. 
\end{theorem}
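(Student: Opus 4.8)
The plan is to adapt Ueda's iterative construction (\cite{U}, Theorem~\ref{thm:ueda_3}) to the codimension-two situation, replacing the compactness of the ambient hypersurface $S$ by that of $C$. First I would fix a finite open cover $\{U_j\}$ of a neighborhood of $C$ in $X$ together with local defining functions $w_j$ of $S$, so that $w_i=t_{ij}w_j$ on overlaps; by hypothesis~(i) the $t_{ij}$ can be arranged to restrict to $U(1)$-valued locally constant functions on $C$. Using ${\rm type}(C,S,X)=\infty$, I would then modify the $w_j$ order by order in $w_j$ so as to render the $t_{ij}$ constant to higher and higher order along $C$: at the $k$-th stage the discrepancy defines a \v{C}ech cocycle whose class lies in $H^1(C, N_{S/X}^{-k}|_C)$, and the vanishing of all these classes---which is precisely what ${\rm type}(C,S,X)=\infty$ encodes---allows the induction to proceed. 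This produces a flat system of defining functions, formal along the normal direction to $S$ over $C$.

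The first analytic difficulty is convergence: these formal modifications must assemble into genuinely holomorphic defining functions on an honest neighborhood $W$ of $C$. Here I would exploit that $N_{S/X}$ is torsion and that $C$ is compact. Torsion-ness restricts the relevant cohomology $H^1(C, N_{S/X}^{-k}|_C)$ (nontrivial only when $N_{S/X}^{-k}|_C$ is itself trivial), while compactness of $C$ furnishes uniform estimates on the solutions chosen at each order; together they drive the standard majorant argument of Ueda. This yields the first assertion, namely that $\mathcal{O}_X(S)$ is $U(1)$-flat on $W$: on $W$, $S$ admits local defining functions with genuinely constant $U(1)$-valued transition functions.

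With flat defining functions in hand, I would construct the transversal hypersurface $Y$ by extending $f$ into the $X$-direction. Writing a candidate extension as $h=\sum_{k\ge 0} f^{(k)}w_j^{\,k}$ in each chart with $f^{(0)}=f$, compatibility across overlaps forces $\{f^{(k)}_j\}_j$ to be a section of $N_{S/X}^{-k}$ near $C$; the obstruction to producing the $f^{(k)}$ compatibly to all orders is exactly what the \emph{extension type} of $f$ (Definition~\ref{def:infinexttype}) measures, and hypothesis~(iii), that $f$ is of extension type infinity, guarantees a formal solution. Convergence is handled as before, via the torsion of $N_{S/X}$ and the compactness of $C$, giving a holomorphic $h$ on a (possibly shrunk, but common) neighborhood $W$ of $C$ with $h|_S=f$. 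Setting $Y=\{h=0\}$, one has $Y\cap S=\{f=0\}=C$; and since ${\rm div}(f)=C$ is reduced and smooth, $df$ is nonvanishing transverse to $C$ inside $S$, so $dh$ restricts nontrivially to $TS$ along $C$ and is therefore linearly independent of $dw_j$ there. Hence $Y$ meets $S$ transversally along $C$, completing the proof.

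The step I expect to be the main obstacle is the convergence, and in particular organizing the two iterations---flattening the neighborhood of $S$ and extending $f$---on a common neighborhood of $C$ despite the non-compactness of $S$. On a noncompact $V\subset S$ there is no control of $H^1(V,\cdot)$, so the whole argument must be arranged so that every obstruction class and every estimate is pushed onto the compact $C$; keeping the formal series convergent while only its restriction to $C$ is being controlled, and checking that the extension-type invariant of $f$ interacts correctly with the flattening of the first part, is where the genuine difficulty lies.
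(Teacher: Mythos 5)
Your overall strategy---order-by-order correction of defining functions, obstructions pushed onto the compact curve, Ueda-style majorant for convergence---is the right family of ideas, but the proposal is organized sequentially (first flatten $\mathcal{O}_X(S)$, then extend $f$), and that decomposition does not go through. The obstruction at the $k$-th stage of your first iteration is the coefficient $f^{(k+1)}_{jk}(x_j,z_j)$ of $w_j^{k+1}$ in $t_{jk}w_k-w_j$, which is a function on the \emph{non-compact} surface $V_{jk}\subset S$; the resulting cocycle lives in $H^1(V,N_{S/X}^{-k})$, not in $H^1(C,N_{S/X}^{-k}|_C)$ as you write, and there is no Kodaira--Spencer estimate there. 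The hypothesis ${\rm type}(C,S,X)=\infty$ only kills the doubly-indexed classes $u_{n,m}\in H^1(C,N_{S/X}|_C^{-n}\otimes N_{C/S}^{-m})$ obtained after a \emph{further} expansion in a transverse coordinate $z_j$ extending the defining function $y_j$ of $C$ in $S$; to exploit them one must correct $w_j$ by terms $G_j^{(\nu,\mu)}(x_j)\,v_j^{\nu}\zeta_j^{\mu}$ whose coefficients depend only on coordinates of $C$, and this forces you to control the system $\{\zeta_j\}$ extending $\{y_j\}$ at the same time. The errors coming from the non-flatness of $\{\zeta_j\}$ feed back into the $w_j$-iteration and vice versa, which is precisely the interaction you flag in your last paragraph without resolving it. The same defect appears in your second stage: the coefficients $f^{(k)}_j$ of your candidate $h=\sum f^{(k)}w_j^k$ are functions on the non-compact $V_j$, so the matching obstruction again escapes the compact curve; the paper's extension classes $v_{n,m}$ are instead defined on $C$ with values in $N_{S/X}|_C^{-n}\otimes N_{C/S}^{-m+1}$ (note that $N_{C/S}$, trivialized by $f$ via $s_{jk}y_k=y_j$, plays an essential role that is absent from your proposal).

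The paper's mechanism for closing this gap is to solve the two Schr\"oder-type functional equations \emph{simultaneously} as a coupled $2\times 2$ system
\[
\left(\begin{array}{c}w_j\\ z_j\end{array}\right)
=\left(\begin{array}{c}v_j\\ \zeta_j\end{array}\right)
+\sum_{\nu\ge 2}\sum_{\mu\ge 0}\left(\begin{array}{c}G_j^{(\nu,\mu)}(x_j)\\ Q_j^{(\nu,\mu)}(x_j)\end{array}\right)v_j^{\nu}\zeta_j^{\mu},
\]
where every coefficient is a function on $C$ alone; the $\delta$-equations for $G_j^{(\nu,\mu)}$ and $Q_j^{(\nu,\mu)}$ are solved on the compact $C$ using the vanishing of $u_{n,m}$ and $v_{n,m}$ respectively (this is where both halves of hypothesis (iii) enter, interleaved in a single lexicographic induction on $(\nu,\mu)$), and a single majorant $A(X,Y)$ satisfying a cubic functional equation bounds both families of coefficients at once. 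The solutions have the compact common zero $C=V\cap Y$, which is what replaces the compactness of $S$ in Ueda's original argument. There is also a preliminary lemma you would need (existence of an extension of $\{y_j\}$ of type $(2,0)$) just to set up the coupled expansion. So the gap is not merely technical bookkeeping: without coupling the two iterations your induction cannot even be formulated with obstructions on $C$, and the convergence argument you defer to ``as before'' has no compact base to run on.
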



The organization of the paper is as follows. 
In \S 2, we review the basics of Levi-flat manifolds (\S 2.1), 
Ueda's theory (\S 2.2) and their correspondence (\S 2.3).
In \S 3, we will discuss the well-defindness of types (\S 3.2) 
and show Theorem \ref{thm:ueda_nbhd} (\S 3.4), which is a key proposition in this paper.
In \S 3.1, 
we pose two new concepts: \textit{the extension class} and \textit{the jet extension property}.  
The former one is an obstruction class for the extension of defining functions on hypersurfaces. 
The latter one is a sufficient condition for the well-defindness of types.
In \S 3.3, under suitable Levi-flat situations, we show the jet extension property. 
In \S 4, we prove main results: Theorem \ref{thm:main} and Theorem \ref{thm:main_higher_dim} by using Theorem \ref{thm:ueda_nbhd}. 
Finally, in \S 5, we give some examples and show Corollary \ref{cor:main}.
 
Throughout this paper, we assume that all manifolds and foliations are non-singular and $C^{\infty}$-smooth.

\section{Preliminaries}
We review Levi-flat manifolds and Ueda's theory. 
For basics of foliation theory, we refer \cite{CC}, \cite{HM}. 
For Ueda's theory, see \cite{U}, \cite{N} and \cite{K}. 


\subsection{Levi-flat manifolds}\label{section:terms}
Let $M$ be a smooth manifold of dimension $2n+1$ 
with a smooth foliation $\mathcal{F}$ of real codimension $1$. 
A foliated manifold $(M,\mathcal{F})$ is said 
to be equipped with a {\it leafwise complex structure} $J_{\mathcal{F}}$ if 
there exists a system of foliation charts $\{(\mathcal{U}_j,\varphi_j)\}$ 
which consists of an open covering $\{\mathcal{U}_j\}$ of $M$ and homeomorphisms 
$\varphi_{j}\colon \mathcal{U}_{j} \to \varphi_{j}(\mathcal{U}_{j}) = \Omega \times (0,1) 
\subset \mathbb{C}^{n}\times \mathbb{R}=\{(z_{j},u_{j})\}$, 
where $\Omega$ is an open set in $\mathbb{C}^{n}$. 
The transition maps form
\[
\varphi_{j}\circ \varphi^{-1}_{k}(z_{k},u_{k})=(f_{jk}(z_{k},u_{k}),g_{jk}(u_{k}))   
\]
where $f_{jk}$'s are holomorphic in $z_{k}$ and $f_{jk}$'s and $g_{jk}$'s are smooth in $u_{k}$. 
The triple $(M,\mathcal{F},J_{\mathcal{F}})$ is called a {\it Levi-flat manifold}.
We say $\varphi_{j}^{-1}(\Omega \times \{u_{j}^0\})$ \textit{a plaque} 
and $\varphi_{j}^{-1}(\{z_{j}^0\} \times (0,1))$ \textit {a transversal} of $\mathcal{F}$. 
An equivalence class of plaques is called \textit{a leaf} of $\mathcal{F}$, and 
in this situation, each leaf is a complex manifold. 
Hence $M$ is foliated by complex manifolds 
whose complex structures vary smoothly on the transverse direction. 
Basic examples come from real codimension 1 invariant sets of holomorphic foliations. 
A real hypersurface $M$ in a complex manifold $X$ is 
said to be \textit{Levi-flat} if $M$ is foliated by complex hypersufaces in $X$. 
An embedding from a Levi-flat manifold $(M,\mathcal{F},J_{\mathcal{F}})$ into 
a complex manifold $X$ 
is called a {\it Levi-flat embedding} if its image is a Levi-flat real hypersurface in $X$. 

Take a closed curve $\gamma$ in a leaf $L$ with a base point $p$ 
and a transversal $\mathcal{T}_{p}$ through $p$. 
As moving $\mathcal{T}_{p}$ in foliation charts around $\gamma$, 
the return map determines a local diffeomorphism $\varphi$ on $\mathcal{T}_{p}$. 
Its germ $\varphi_{0}$ is called the \textit{holonomy} of $\mathcal{F}$ along $\gamma$, 
which depends only on the homotopy class of $\gamma$.
The holonomy $\varphi_{0}$ is said to be \textit{contracting} if $|\varphi(u)|<|u|$ near $0$, 
$u\not= 0$, where $u$ is a coordinate on $\mathcal{T}_{p}$.
The holonomy $\varphi_{0}$ is said to be \textit{$C^{r}$-flat} $(1\leq r \leq \infty)$ if 
$\varphi$ is tangent to the identity of order $r$ at $0$. 
Let $G=\textrm{Diff}^{\infty}(\mathbb{R};0)^{\delta}$ be 
the group of germs at $0$ of local $C^{\infty}$ diffeomorphisms 
of $\mathbb{R}$ fixing $0$ with the discrete topology. 
The above correspondence gives 
a homomorphism 
\[
\rho\colon\pi_{1}(L,p)\to G,
\]
called the \textit{holonomy homomorphism} of $\mathcal{F}$ along $L$. 
The image of $\rho$ is called the holonomy group of $\mathcal{F}$ along $L$, denoted by $\mathcal{H}(L)$.

\begin{example}[The Hopf construction of a Reeb component]\label{eg:reeb}
We introduce a construction of \textit{a Reeb component} which will be a basic example 
to which the main theorem apply. 
Set  
\[
\tilde{R}=(\mathbb{C}^{n}\times [0,\infty)) \backslash \{(0',0)\}
\]
with coordinates $(z, u)$ and $0'$ is the origin in $\mathbb{C}^n$.

Choose a constant $\lambda \in \mathbb{C}$ with $|\lambda |>1$ and 
a diffeomorphism $\varphi \in \textrm{Diff}^{\infty}([0,\infty))$ satisfies $\varphi(u)=u+f(u)$
where $f:[0,\infty)\to [0,\infty)$ with $f(0)=0$ is a strictly increasing smooth function and 
$f^{(k)}(0)=0$ for any $k\geq 0$. 
Consider the $\mathbb{Z}$-action on $\tilde{R}$ generated by 
$$\gamma (z, u) = (\lambda z, \varphi(u)).$$ 
The quotient manifold $R=\tilde{R}/\langle \gamma \rangle^{\mathbb{Z}}$ 
is a Levi-flat manifold whose foliation is induced by  
$\{\mathbb{C}^n\times \{u\}\}_{u\in [0,\infty)}$. 
The underlying foliated manifold is called \textit{a Reeb component}.
The Reeb component is diffeomorphic to $D^{2n}\times S^1$ and 
its boundary leaf $L$ is a Hopf manifold. 
By the construction, $\mathcal{F}$ has $C^{\infty}$-flat contracting holonomy along $L$.
When $n=1$ and $\lambda=\exp(2\pi)$, 
the boundary leaf $L$ is biholomorphic to the torus $\mathbb{C}/(\mathbb{Z}+\mathbb{Z}\sqrt{-1})$.  
Take a copy of the Reeb component and attach along the boundary leaves by 
a biholomorphism induced by a map $z\mapsto \sqrt{-1}z$. 
Then we have a Levi-flat manifold $(S^3,\mathcal{F}_{\rm Reeb}, J_{\mathcal{F}})$, 
whose foliation is called \textit{a Reeb foliation} on $S^3$.  
\end{example}


\subsection{Review of Ueda theory}\label{section:review_ueda_theory}

In this subsection, let us recall briefly Ueda's theory and its variant. 

\subsubsection{The original Ueda theory}
Ueda's theory \cite{U} is a neighborhood structure theory of a complex hypersurface 
with the topologically trivial normal bundle. 
Let $S$ be a compact complex hypersurface of a complex manifold $X$ 
with the {\it $U(1)$-flat} normal bundle: i.e. $N_{S/X}\in H^1(S, U(1))$. 
Then the line bundle $\mathcal{O}_X(S)$ defined by the divisor $S$ is topologically trivial on a tubular neighborhood of $S$ in $X$. 
For such a pair $(S, X)$, Ueda defined an obstruction class $u_n(S, X)\in H^1(S, N_{S/X}^{-n})$ for each $n\geq 1$ (see below for the definition). 
By using these obstruction classes, he gave a sufficient condition for $\mathcal{O}_X(S)$ to be $U(1)$-flat around $S$. 

\begin{theorem}[a part of {\cite[Theorem 3]{U}}]\label{thm:ueda_3}
Let $X$ be a complex manifold and $S$ a smooth compact hypersurface of $X$. 
Assume that $N_{S/X}$ is a torsion element of the Picard group and that the pair $(S, X)$ is of infinite type (i.e. $u_n(S, X)=0$ for each integer $n\geq 1$). 
Then there exists a neighborhood $V$ of $S$ in $X$ such that the line bundle $\mathcal{O}_V(S)$ is $U(1)$-flat. 
In particular, there exists a holomorphic function $f$ on a neighborhood of $S$ such that $f$ vanishes only along $S$ with multiplicity $m$, where $m$ is the order of $N_{S/X}$. 
\end{theorem}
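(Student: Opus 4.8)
The plan is to prove Theorem \ref{thm:ueda_3} following Ueda's original strategy: construct, by an inductive procedure on the order of vanishing, a system of local holomorphic defining functions of $S$ whose transition behavior is governed precisely by the obstruction classes $u_n(S,X)$, and then show that the infinite-type hypothesis lets us push the construction to all orders and sum into a genuine $U(1)$-flat structure.

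Let me lay out the key steps. First I would fix a covering $\{V_j\}$ of a neighborhood of $S$ by coordinate charts in which $S=\{w_j=0\}$ for a local holomorphic function $w_j$, so that the transition functions $w_j = t_{jk}w_k + O(w_k^2)$ define the normal bundle via the cocycle $\{t_{jk}\}$. Because $N_{S/X}$ is $U(1)$-flat, I may choose the representative so that $\{t_{jk}\}\in H^1(S,U(1))$, i.e. the $t_{jk}$ are constants of modulus one. The heart of the argument is an \emph{order-by-order normalization}: assuming the defining functions have been adjusted so that the transition relations agree up to order $n$, the discrepancy at order $n+1$ is a Čech $1$-cocycle valued in $N_{S/X}^{-n}$ whose cohomology class is exactly $u_n(S,X)$. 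Its vanishing (guaranteed by the infinite-type hypothesis) means one can modify the local defining functions by an order-$(n+1)$ correction to make the relations agree up to order $n+1$ as well. Iterating produces formal power series defining functions that are $U(1)$-flat to all orders.

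The next step is to pass from the formal/all-orders data to an actual convergent $U(1)$-flat structure on a neighborhood. Here I would invoke the torsion hypothesis: if $N_{S/X}$ has order $m$ in the Picard group, then $N_{S/X}^{\otimes m}$ is holomorphically trivial, and the all-orders flatness, combined with compactness of $S$, upgrades the formal construction to a genuine holomorphic function $f$ on a neighborhood of $S$ vanishing exactly to order $m$ along $S$; equivalently the $m$-th powers of the normalized local defining functions patch into a global holomorphic function. This simultaneously yields that $\mathcal{O}_V(S)$ is $U(1)$-flat on the resulting neighborhood $V$ and furnishes the claimed defining function.

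The main obstacle I anticipate is exactly the convergence/existence step: the inductive normalization is purely formal and only shows the obstruction classes vanish to each finite order, so extracting an honest holomorphic (convergent) defining function on an open neighborhood, rather than a merely formal one, is where the real analysis lies. This is precisely where compactness of $S$ and the torsion of $N_{S/X}$ are essential—compactness to control the geometry uniformly and to apply finiteness/maximum-principle type arguments, and torsion to ensure the relevant power $N_{S/X}^{-m}$ is trivial so that the patched $m$-th powers define a single-valued holomorphic function. Verifying that the formal series so constructed actually converge on a fixed neighborhood is the delicate heart of Ueda's theorem, and I would expect to need his estimates on the sizes of the successive corrections to carry this through.
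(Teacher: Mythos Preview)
The paper does not give its own proof of Theorem~\ref{thm:ueda_3}; it is quoted as a result of Ueda \cite[Theorem 3]{U}. Your outline is correct and is essentially Ueda's strategy, so there is no gap at the level of the overall architecture.

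That said, since you flagged the convergence step as the main obstacle, it is worth comparing your description with how the paper (and Ueda) actually organize it, as visible in the proof of the codimension-2 analogue Theorem~\ref{thm:ueda_nbhd} in \S\ref{pfthm:ueda_nbhd}. Rather than iteratively modifying the defining functions and then worrying about whether the limit converges, one writes down from the start a single Schr\"oder-type functional equation $w_j=v_j+\sum_{\nu\geq 2}G_j^{(\nu)}(x_j)\,v_j^\nu$ for the sought $U(1)$-flat defining functions $\{v_j\}$, and determines the coefficients $G_j^{(\nu)}$ recursively as solutions of $\delta$-equations whose right-hand sides are the obstruction cocycles. The torsion hypothesis enters not through taking $m$-th powers at the end, but at the estimate stage: by \cite[Lemma 3]{U} (the Kodaira--Spencer lemma), each $\delta$-equation on $N_{S/X}^{-\nu}$ can be solved with a norm loss bounded by a constant $K_\nu=K(N_{S/X}^{-\nu})$, and torsion guarantees that only finitely many distinct line bundles occur, so $K:=\max_\nu K_\nu<\infty$. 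With this uniform $K$ one constructs an explicit dominant power series $A$ satisfying an algebraic (in the paper, cubic) equation such as (\ref{eq:funceq_A}); the positive radius of convergence of $A$ then forces the functional equation to have a holomorphic solution on a genuine neighborhood. Your picture of ``patching the $m$-th powers'' is a consequence of this rather than the mechanism of the proof.
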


\begin{remark}
In \cite[Theorem 3]{U}, Ueda gave a sufficient condition for the $U(1)$-flatness of the line bundle $\mathcal{O}_X(S)$ around $S$ not only when $N_{S/X}$ is torsion, but also when $N_{S/X}$ enjoys the ``Siegel type condition" ($N_{S/X}\in\mathcal{E}_1(S)$, see Remark \ref{rmk:case_E_1}). 
The $U(1)$-flatness of the line bundle $\mathcal{O}_X(S)$ around $S$ can be interpreted as the existence of a holomorphic foliation on a neighborhood of $S$ which has $S$ as a leaf with a $U(1)$-linear holonomy. 
In this sense, Ueda theory is related to 
the local existence problem of the holomorphic foliations (see also \cite{CLPT}).
\end{remark}

Let us explain the definition of the obstruction classes $u_{n}(S, X)$ along \S 2 in \cite{U}. 
Take a sufficiently fine open covering $\{V_j\}$ of $S$. 
From the assumption, 
$N_{S/X}$ is represented by $\{(V_{jk}, t_{jk})\}$ for some constants $t_{jk}\in U(1)$, 
where $V_{jk}:=V_j\cap V_k$ (see \cite[\S 1.1]{U}). 
Let $W$ be a tubular neighborhood of $S$ in $X$ 
and $\{W_j\}$ be sufficiently fine open covering of $W$. 
Assume that $V_j=W_j\cap S$ for each $j$, and $V_{jk}=\emptyset$ iff $W_{jk}=\emptyset$. 
Take a coordinates system $(z_j, w_j)$ on $W_j$ such that $z_j$ is a coordinate on $V_j$, 
$w_j$ is a defining function of $V_j$ and $(w_j/w_k)|_{V_{jk}}\equiv t_{jk}$ on $V_{jk}$ for each $j,k$. 
Let $n$ be a positive integer. 
We say that a system $\{(W_j, w_j)\}$ is {\it of type $n$} if ${\rm mult}_{V_{jk}}(t_{jk}w_k-w_j)\geq n+1$ holds (i.e. the $n$-jet of $t_{jk}w_k-w_j$ vanishes along $V_{jk}$) on each $W_{jk}$. If there exists a system $\{(W_j, w_j)\}$ of type $n$, 
the Taylor expansion of $t_{jk}w_k$ in the variable $w_j$ on $W_{jk}$ around $\{w_j=0\}$ can be written in the form 
\[
t_{jk}w_k=w_j+f_{jk}^{(n+1)}(z_j)\cdot w_j^{n+1}+O(w_j^{n+2})
\]
for some holomorphic function $f^{(n+1)}_{jk}$ defined on $V_{jk}$. 
It is known that $\{(V_{jk}, f^{(n+1)}_{jk})\}$ satisfies the cocycle condition (see \cite[p. 588]{U}). 

\begin{definition}
Suppose that there exists a system of type $n$. 
Then the cohomology class 
\[
u_n(S, X):=[\{(V_{jk}, f^{(n+1)}_{jk})\}]\in H^1(S, N_{S/X}^{-n})
\]
is called {\it the $n$-th Ueda class} of the pair $(S, X)$. 
\end{definition}

Unfortunately, the $n$-th Ueda class may depend on the choice of a system of type $n$. 
Thus, strictly speaking, this class should be written as ``$u_n(S, X; \{w_j\})$" in the general setting. 
However, under some configurations, the condition ``$u_n(S, X)=0$" does not depend on the choice of a system of type $n$. 
For example, it follows from Ueda's argument in \cite[p. 588]{U} that the $n$-th Ueda class is well-defined up to non-zero constant multiples if $S$ is compact, or if $N_{S/X}$ is holomorphically trivial and $H^0(S, \mathcal{O}_S)=\mathbb{C}$ holds. 
We will also show the well-definedness of the condition ``$u_n(S, X)=0$'' for a slightly generalized configuration which appears in the proof of our main results (under ``the jet extension property", see \S \ref{section:well-def}). 

At first, we consider the case where the condition ``$u_n(S, X)=0$'' does not depend on the choice of a system of type $n$. 
In this case, it is known that $u_n(S, X)=0$ if and only if there exists a system of type $n+1$. 
Thus, in this case, one 
of the following occurs. 
\begin{itemize}
\item There exists an integer $n\in\mathbb{Z}_{>0}$ such that $u_m(S, X)$ can be defined only when $m\leq n$, $u_m(S, X)=0$ for $m<n$, and $u_n(S, X)\not=0$. 
\item For every integer $n\in\mathbb{Z}_{>0}$, $u_n(S, X)$ can be defined and $u_n(S, X)=0$. 
\end{itemize}

We denote by ``${\rm type}\,(S, X)$'' the supremum of the set of integers $n$ such that $u_m(S, X)$ can be defined and $u_m(S, X)=0$ for each $m<n$. 
In this paper, we will treat the notion ``of infinite type'' in a slightly general setting (even if the condition ``$u_n(S, X)=0$'' depends on the choice of a system of type $n$) by adopting the following definition: 

\begin{definition}
We say that the pair $(S, X)$ is {\it of infinite type} if $u_n(S, X; \{w_j\})=0$ holds for each $n\geq 1$ and for {\it any} system $\{(W_j, w_j)\}$ of type $n$. 
\end{definition}

\subsubsection{The codimension-$2$ Ueda theory}\label{section:codim2ueda_review}
In \cite{K}, a codimension-$2$ analogue of this Ueda's theorem is posed. 
Let $X$ be a complex manifold, $S$ a complex hypersurface of $X$, 
and $C$ a compact complex hypersurface of $S$ 
with the normal bundles $(N_{S/X},\{t_{jk}\})$ and $(N_{C/S},\{s_{jk}\})$. 
Assume that $N_{S/X}$ is $U(1)$-flat on an open neighborhood $V$ of $C$ in $S$ 
and $N_{C/S}$ is also $U(1)$-flat: $t_{jk}, s_{jk}\in U(1)$. 
Let $W$ be a tubular neighborhood of $C$ in $X$ such that $W\cap S=V$. 
Take a sufficiently fine open covering 
$\{U_j\}$ of $C$, $\{V_j\}$ of $V$, and $\{W_j\}$ of $W$ 
such that $V_j=W_j\cap S$, $U_j=V_j\cap C$, and $V_{jk}=\emptyset$ iff $W_{jk}=\emptyset$. 
Extend a coordinates system $x_j$ of $U_j$ to $W_j$. 
Let $y_{j}$ be a defining function of $U_{j}$ in $V_{j}$ 
and $w_j$ a defining function of $V_j$ in $W_{j}$. 
Denote an extension of $y_{j}$ on $W_{j}$ by $z_{j}$, that is $z_{j}|_{V_{j}}=y_{j}$,
so that $(x_{j},z_{j},w_{j})$ is a coordinates system on $W_{j}$ (Figure 1). 
From the same argument as in \cite[\S 2]{U}, 
we may assume that $({w_j}/{w_k})|_{V_{jk}}\equiv t_{jk}$ and $({y_j}/{y_k})|_{U_{jk}}\equiv s_{jk}$. 

\begin{figure}[htbp]
 \begin{center}
  \includegraphics[width=100mm]{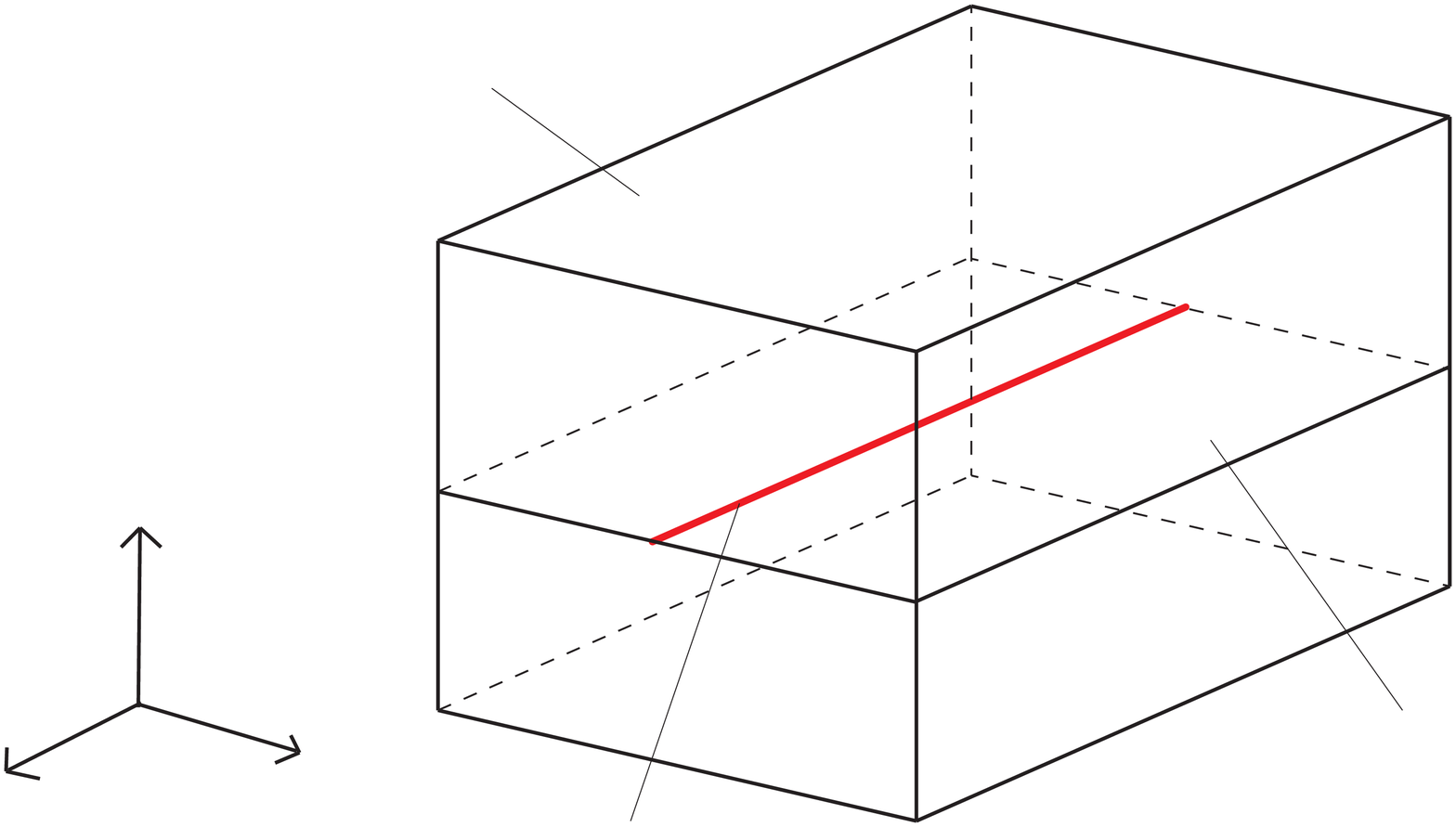}
 \end{center}
 \begin{picture}(0,0)
\put(-25,3){$U_{j}\subset C$}
\end{picture}
 \begin{picture}(0,0)
\put(130,25){$V_{j}\subset S$}
\end{picture}
 \begin{picture}(0,0)
\put(-70,170){$W_{j}\subset X$}
\end{picture}
 \begin{picture}(0,0)
\put(-150,15){$x_{j}$}
\end{picture}
 \begin{picture}(0,0)
\put(-95,18){$z_{j}$}
\end{picture}
 \begin{picture}(0,0)
\put(110,5){$(y_{j}:=z_{j}|_{V_{j}})$}
\end{picture}
 \begin{picture}(0,0)
\put(-140,80){$w_{j}$}
\end{picture}
 \caption{}
 \label{fig:one}
\end{figure}

Assume that our system $\{(W_j, w_j)\}$ is of type $(n, m)$ in the sense of \cite[Definition 4]{K}. 
i.e. the coefficient of $w_j^\nu z_j^\mu$ in the Taylor expansion of the function $t_{jk}w_k-w_j$ 
in the variables $w_j$ and $z_j$ around $U_{jk}$ is equal to zero if $(\nu, \mu)< (n+1, m)$ holds in the lexicographical order, namely 
$(a, b)\leq (a', b')\overset{\mathrm{def}}{\Leftrightarrow} a< a'\ \text{or}\ ``a=a'\ \text{and}\ b\leq b'"$. 
Then 
the function $t_{jk}w_k$ can be expanded in the variable $w_j$ as 
\begin{equation}\label{eq:def_u_n}
 t_{jk}w_k=w_j+f_{jk}^{(n+1)}(x_j, z_j)\cdot w_j^{n+1}+O(w_j^{n+2})
\end{equation}
and the expansion of $f_{jk}^{(n+1)}(x_j, z_j)$ in the variable $z_j$ is written by 
\begin{equation}\label{eq:def_u_nm}
 f_{jk}^{(n+1)}(x_j, z_j)=g_{jk}^{(n+1, m)}(x_j)\cdot z_j^m+O(z_j^{m+1}). 
\end{equation}
It can be shown that a system $\{(U_{jk}, g_{jk}^{(n+1, m)})\}$ enjoys the cocycle condition for the line bundle $N_{S/X}|_C^{-n}\otimes N_{C/X}^{-m}$ (\cite[Proposition 2]{K}). 
The $1$-cocycle $\{(U_{jk}, g_{jk}^{(n+1, m)})\}$ defines 
an element of $H^1(C, N_{S/X}|_C^{-n}\otimes N_{C/S}^{-m})$, 
denoted by $u_{n, m}(C, S, X)$ and called {\it the $(n, m)$-th Ueda class} of the triple $(C, S, X)$. 

In \S \ref{pfthm:ueda_nbhd},  
to show Theorem \ref{thm:ueda_nbhd}, 
we will refine the systems $\{w_{j}\}$ and $\{z_{j}\}$ \textrm{keeping fixed} $\{y_{j}\}$ and $\{x_{j}\}$.  
As we show in Lemma \ref{lemma_well-def_of_u_n_m_z}, 
the $(n, m)$-th Ueda class of $(C,S,X)$ depends only on the the choice of a system $\{w_{j}\}$ of type $(n, m)$. 
So the class should be written as  ``$u_{n, m}(C, S, X; \{w_j\})$". 
However, under some configurations, the condition  ``$u_{n, m}=0$" {\it does not} depend on 
such a system as in the case of $u_n$ (see \S \ref{section:well-def}).

\begin{definition}\label{def:infintype}
We say that the triple $(C, S, X)$ is {\it of infinite type} if $u_{n, m}(C, S, X; \{w_j\})=0$ holds for each $n\geq 1, m\geq 0$ and for {\it any} system $\{(W_j, w_j)\}$ of type $(n, m)$. 
\end{definition}

\begin{remark}\label{rmk:un_unm}
If the pair $(S, X)$ is of infinite type and the condition \linebreak``$u_{n, m}(C, S, X; \{w_j\})=0$" does not depend on the choice of the system $\{w_j\}$ of type $(n, m)$, then the triple $(C, S, X)$ is of infinite type.
\end{remark}


\subsection{Barrett-Fornaess' theorem and Appendix in \cite{B}}\label{section:Barrett-Fornaess}

Let $(M,\mathcal{F})$ be a Levi-flat hypersuface in a complex manifold $X$. 
In \cite{BF}, Barrett and Fornaess showed that 
if $M$ is of class $C^r$ ($r\geq 2$) then so is $\mathcal{F}$. 
To show this, they constructed local holomorphic coordinates 
approximating a foliation chart of $(M,\mathcal{F})$. 
In the appendix of \cite{B}, by using this, 
Barrett clarified a relation between 
the Ueda type of a leaf and the $C^r$-flatness of holonomy. 
This point of view appears  
in \S \ref{section:jetextproperty} and \S \ref{proof of theorem}. 

More precisely, from the condition of $C^{r}$-flatness, we obtain the following data 
(Appendix in \cite{B}): 
A system $\{(W_{j},w_{j})\}$ of holomorphic local defining functions for $L$ in $X$ and 
a system $\{(\mathcal{U}_{j},u_{j})\}$ of leafwise constant $C^{\infty}$ defining functions for $L$ in $M$ 
which satisfy $\mathcal{U}_{j}=W_{j}\cap M$ and 
the following conditions for each $j, k$ $\colon$$\vspace{2mm}$\\
(i) $w_{k}-w_{j} = O(w_{j}^{r+1})$ on $W_{jk}$,$\vspace{1mm}$\\ 
(ii) $u_{k}-u_{j}=O(u_{j}^{r+1})$ on $\mathcal{U}_{jk}$, $\vspace{1mm}$\\
(iii) (Im $w_{j})|_{\mathcal{U}_{j}}$ $= o(|w_{j}|^{r})$ on $\mathcal{U}_{j}$ and$\vspace{1mm}$\\
(iv) (Re $w_{j} )|_{\mathcal{U}_{j}}$ $=$ $u_{j}+o(u_{j}^r)$ on $\mathcal{U}_{j}$. $\vspace{2mm}$\\
The condition (ii) is the $C^{r}$-flatness of $\mathcal{F}$. 
The condition (iii) was shown in \cite{BF}. 
To get such a system, 
for a holomorphic local defining function of $L$, 
we inductively correct it so as to vanish the jet of its imaginary part along $M$.  
In the arguments in \S \ref{section:well-def} and \S \ref{section:jetextproperty}, 
we will follow a similar procedure.  
Note that, by the condition (i), 
the $C^1$-flatness implies that the normal bundle $N_{L/X}$ is holomorphically trivial, that is, $t_{jk}\equiv 1$.

\begin{proposition}[Barrett, Appendix in {\cite{B}}]\label{prop:BAppend}
Let $(M,\mathcal{F})$ be a Levi-flat hypersurface in a complex manifold $X$ 
and $L$ a (possibly non-compact) leaf of $\mathcal{F}$ embedded in $X$. 
If $\mathcal{F}$ has $C^{r}$-flat holonomy along $L$, 
then 
there exists a system $\{(W_{j},w_{j})\}$ 
of defining holomorphic functions for each plaque of $L$ 
which satisfy the following conditions: 
\[
w_{k}-w_{j} = O(w_{j}^{r+1})~\textrm{on}~ W_{jk}\quad\textrm{and}\quad
({\rm Im}~w_{j})|_{\mathcal{U}_{j}} = o(|w_{j}|^{r})~\textrm{on}~\mathcal{U}_{j}. 
\]
In particular, the normal bundle $N_{L/X}$ is holomorphically trivial. 
If $L$ is compact, then the pair $(L,X)$ is of infinite type in the sense of Ueda. 
\end{proposition}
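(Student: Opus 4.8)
The plan is to reduce the statement to the explicit data $(i)$--$(iv)$ of \S\ref{section:Barrett-Fornaess}, to construct such data by an inductive flattening procedure, and then to read off each assertion. The starting point is the Barrett--Fornaess regularity theorem \cite{BF}: since $M$ is Levi-flat of class $C^r$, the foliation $\mathcal{F}$ is $C^r$ and on each chart near $L$ one has holomorphic coordinates $(z_j,w_j)$ approximating the foliation chart, in which the plaque of $L$ is $\{w_j=0\}$ and $w_j$ is a holomorphic defining function. I would take these as a zeroth approximation, in which $(\mathrm{Im}\,w_j)|_{\mathcal{U}_j}$ already vanishes to low order along $L$.

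The heart of the proof is to flatten the imaginary part inductively. Suppose $(\mathrm{Im}\,w_j)|_{\mathcal{U}_j}=o(|w_j|^s)$ for some $1\le s<r$. On each leaf $w_j$ is holomorphic, so $\mathrm{Im}\,w_j$ is leafwise pluriharmonic; consequently the order-$(s+1)$ coefficient of $(\mathrm{Im}\,w_j)|_{\mathcal{U}_j}$ in the transversal parameter is, to leading order, a pluriharmonic function on $L$, hence locally the imaginary part of a holomorphic function $c_j(z_j)$. Replacing $w_j$ by $w_j+c_j(z_j)\,w_j^{s+1}$ preserves holomorphicity and cancels this coefficient, improving the estimate to $o(|w_j|^{s+1})$. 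The $C^r$-flatness of the holonomy (condition $(ii)$) is what keeps the transversal parameter matched with $\mathrm{Re}\,w_j$ at each stage, so that no lower-order term is reintroduced and the induction can be carried to order $r$; this yields $(iii)$, and the leafwise-constant $u_j$ can be normalized so that $(iv)$ holds.

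It remains to deduce the transition estimate $(i)$. On $W_{jk}$ both $w_j,w_k$ are holomorphic defining functions of $L$, so $w_k-w_j=h_{jk}w_j$ for a holomorphic $h_{jk}$, and it suffices to prove $h_{jk}=O(w_j^{r})$. Restricting to $M$, conditions $(iii)$ and $(iv)$ give $w_j|_{\mathcal{U}_j}=u_j+o(u_j^{r})$ while $(ii)$ gives $u_k-u_j=O(u_j^{r+1})$; combining, $(w_k-w_j)|_{\mathcal{U}_j}=O(u_j^{r+1})$, so $h_{jk}|_{\mathcal{U}_j}=O(u_j^{r})$. Expanding $h_{jk}=\sum_{\ell\ge0}h_{jk,\ell}(z_j)w_j^{\ell}$ and matching Taylor coefficients in the \emph{real} transversal variable $u_j$ at each fixed point of $L$ forces $h_{jk,\ell}\equiv0$ for $\ell<r$, whence $h_{jk}=O(w_j^{r})$ and $w_k-w_j=O(w_j^{r+1})$. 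The step I expect to be the main obstacle is exactly this passage from vanishing \emph{along the real hypersurface} $M$ to vanishing \emph{along the complex hypersurface} $L$: since $M$ is only real one cannot analytically continue, and one must use that near $L$ the hypersurface is a product $L\times(\text{real transversal})$, so that moving the single real parameter $u_j$ at fixed leaf point already detects all normal jets up to order $r-1$. Together with the pluriharmonicity input of the previous paragraph, this is the delicate part of the whole argument.

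Finally, the two ``in particular'' assertions follow formally. As noted after $(i)$, the estimate $w_k-w_j=O(w_j^{r+1})$ with $r\ge1$ forces $t_{jk}\equiv1$, so $N_{L/X}$ is holomorphically trivial. If the holonomy is $C^\infty$-flat (the case $r=\infty$) and $L$ is compact, the construction produces, for every $n$, a system $\{(W_j,w_j)\}$ with $w_k-w_j=O(w_j^{n+1})$, i.e. a system of type $n+1$ in the sense of \S\ref{section:review_ueda_theory}; since compactness of $L$ makes the condition $u_n(L,X)=0$ well-defined, the existence of such a system forces $u_n(L,X)=0$ for all $n$, so $(L,X)$ is of infinite type.
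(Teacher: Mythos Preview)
Your approach is essentially the one the paper sketches (the paper does not give a self-contained proof here, only the outline preceding the proposition: condition $(iii)$ is attributed to \cite{BF}, and the data $(i)$--$(iv)$ are produced by ``inductively correcting $w_j$ so that the jet of its imaginary part along $M$ vanishes''). Your inductive flattening step and your derivation of $(i)$ from $(ii)$--$(iv)$ are exactly this procedure, carried out in more detail, and the two ``in particular'' conclusions are read off just as the paper indicates.

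A couple of minor imprecisions, neither of which affects the argument: the coefficient you correct at each inductive step is pluriharmonic on the \emph{plaque} $V_j$, not on all of $L$ (this is enough, since $V_j$ may be taken simply connected); and in your derivation of $(i)$, the combination of $(ii)$--$(iv)$ gives $(w_k-w_j)|_{\mathcal{U}_{jk}}=o(u_j^{r})$ rather than $O(u_j^{r+1})$, because $(iii)$ and $(iv)$ are only little-$o$ estimates --- but $o(u_j^{r})$ still forces $h_{jk,\ell}=0$ for $\ell\le r-1$, hence $w_k-w_j=O(w_j^{r+1})$ as required.
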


\section{Codimension 2 Ueda theory and jet extension property}\label{section:open_analogue_ueda_theory}
Let the triple $(C,S,X)$ and its normal bundles be as in \S \ref{section:codim2ueda_review}. 
Take tubular neighborhoods $V$ and $W$, open coverings $\{U_j\},\{V_j\}$ and $\{W_j\}$, 
and functions $\{x_j\},\{y_j\},\{z_j\}$ and $\{w_j\}$ as in \S \ref{section:codim2ueda_review}. 
In addition, assume that $\{y_j\}$ satisfies $s_{jk}y_{k}=y_{j}$ on $V_{jk}$. 
In the following sections, we will refine the system $\{w_{j}\}$ and 
the extension $\{z_{j}\}$ of $\{y_{j}\}$ 
\textit{keeping fixed $\{x_{j}\}$ and $\{y_{j}\}$}. 
In what follows, we consider only a tubular neighborhood of the compact set $C$, that is,
regard $S=V$ and $X=W$. 

\subsection{The extension type}\label{section:ext_type}
We introduce the notion of the \textit{type} of extension $\{z_{j}\}$ of $\{y_{j}\}$.

\begin{definition}
Let $\{y_{j}\}$ and $\{z_{j}\}$ be as above. 
For $n\geq1, m\geq 0$,
the extension $\{z_{j}\}$ of $\{y_{j}\}$ is \textit{of type $(n,m)$} if 
the expansion in $w_j$ forms 
\[
s_{jk}z_k=z_j+p_{jk}^{(n)}(x_j, z_j)\cdot w_j^{n}+O(w_j^{n+1})
\]
and the expansion of $p_{jk}^{(n)}(x_j, z_j)$ in $z_j$ is written by
\[
p_{jk}^{(n)}(x_j, z_j)=q_{jk}^{(n, m)}(x_j)\cdot z_j^m+O(z_{j}^{m+1})
\]
for each $j$.
\end{definition}
The $1$-cochain $\{(U_{jk}, q_{jk}^{(n, m)})\}$ satisfies the cocycle condition 
valued on $N_{S/X}|^{-n}_{C}\otimes N_{C/S}^{-m+1}$. 
This $1$-cocycle defines a cohomology class in $H^{1}(C, N_{S/X}|^{-n}_{C}\otimes N_{C/S}^{-m+1})$, 
which is denoted by $v_{n, m}(C, S, X; \{z_j\})$. 

\begin{definition}\label{def:infinexttype}
We say that $\{y_{j}\}$ is \textit{of extension type infinity} if 
$v_{n, m}(C, S, X;\{z_j\})=0$ holds for each $n\geq 1, m\geq 0$ and 
for {\it any} type $(n, m)$ extension $\{z_{j}\}$ of $\{y_{j}\}$. 
\end{definition}

\subsection{Well-definedness of the types and jet extension property}\label{section:well-def}

As we mentioned, 
the obstruction classes $u_{n}$, $u_{n,m}$ and also $v_{n,m}$ may depend on the 
choice of systems. 
Let us give sufficient conditions for the well-definedness of 
$``u_{n}=0"$, $``u_{n, m}=0"$ and $``v_{n, m}=0"$ 
and proof them in the next three subsections. 
These proofs are in essentially the same manner as Lemma \ref{lem:jet_ext_welldef_unm} below.
In the following subsections, we often assume that $t_{jk}\equiv 1$. 
Note that, this will be automatically satisfied in our Levi-flat situation. See Proposition \ref{prop:BAppend}. 

Ueda showed the well-definedness of Ueda type for $(S,X)$ 
when $S$ is compact without boundary (\cite[p. 588]{U}). 
The key point of the proof is that the restriction map 
$H^0(X, \mathcal{O}_X)\to H^0(S, \mathcal{O}_S)=\mathbb{C}$ is surjective. 
Let us refine this argument (see also the proof of \cite[Lemma 1.2]{N}). 
Here we introduce a key concept in the present work. 

\begin{definition} Let $n$ be a positive interger.
We say 
the pair $(S,X)$ satisfies \textit{the $n$-jet extension property} if 
the restriction map 
$H^0(S, \mathcal{O}_X/I_S^{n+1})\to H^0(X, \mathcal{O}_X/I_S) =H^0(S, \mathcal{O}_S)$ is surjective, 
where $I_S\subset \mathcal{O}_X$ is the defining ideal sheaf of $S$. 
Also say that the pair $(S,X)$ satisfies \textit{the jet extension property} if 
it does the $n$-th jet extension property for any $n$. 
\end{definition}

\subsubsection{Well-definedness of the condition $``u_{n}=0"$}

\begin{lemma}\label{lem:jet_ext_welldef_unm}
The condition $u_n(S, X; \{w_j\})=0$ does not depend on the choice of the system $\{w_j\}$ of type $n$ if
the pair $(S,X)$ satisfies the $n$-jet extension property and $t_{jk}\equiv 1$.
\end{lemma}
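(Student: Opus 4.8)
The plan is to compare two systems $\{(W_j,w_j)\}$ and $\{(W_j,\tilde w_j)\}$ of type $n$, both normalized with trivial transition along $S$, i.e. $(w_j/w_k)|_{V_{jk}}=(\tilde w_j/\tilde w_k)|_{V_{jk}}=1$, and to show that their $n$-th Ueda cocycles differ, up to a $1$-coboundary, by multiplication by a global nonvanishing holomorphic function on $S$. Since $t_{jk}\equiv 1$ forces $N_{S/X}$ to be holomorphically trivial, so that $N_{S/X}^{-n}\cong\mathcal{O}_S$ and $u_n$ lives in $H^1(S,\mathcal{O}_S)$, such a multiplication is an isomorphism fixing the zero class; combined with the vanishing of coboundaries this will yield $u_n(S,X;\{w_j\})=0\Leftrightarrow u_n(S,X;\{\tilde w_j\})=0$. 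First I would set up the comparison: since $w_j$ and $\tilde w_j$ are both defining functions of $V_j=W_j\cap S$ vanishing to first order, on each $W_j$ I expand $\tilde w_j=\lambda_j w_j+\sum_{l\geq 2}a_j^{(l)}w_j^l$, where $\lambda_j=(\partial\tilde w_j/\partial w_j)|_{w_j=0}$ is nonvanishing along $V_j$ and the $a_j^{(l)}$ are holomorphic on $V_j$. Because both systems induce the trivial transition, the leading coefficients match on overlaps, $\lambda_j|_{V_{jk}}=\lambda_k|_{V_{jk}}$, so $\{\lambda_j|_{V_j}\}$ patches to a global $\lambda\in H^0(S,\mathcal{O}_S^\ast)$.

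The crux is to remove the effect of the lower-order corrections $a_j^{(l)}$ for $2\leq l\leq n$, which are a priori only local and which do contribute to the order-$(n+1)$ coefficient through the coordinate transition functions on $S$. I would do this by interpolating with a single \emph{global} change of defining function matching $\{w_j\}$ to $\{\tilde w_j\}$ modulo $I_S^{n+1}$. The obstruction to building such a change is precisely that the matching data, starting with $\lambda$, lives on $S$ but must be promoted to a globally consistent $n$-jet along $S$; this is where the $n$-jet extension property enters. The surjectivity of $H^0(S,\mathcal{O}_X/I_S^{n+1})\to H^0(S,\mathcal{O}_S)$ lets me realize the required order-$\leq n$ correction as the restriction of a global section of $\mathcal{O}_X/I_S^{n+1}$, producing a type-$n$ system $\{w_j^\flat\}$ with $w_j^\flat\equiv\tilde w_j\pmod{w_j^{n+1}}$ whose $n$-th Ueda cocycle equals that of $\{w_j\}$ multiplied by a power $\lambda^c$, up to a coboundary. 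This order-by-order global matching, together with the verification that the transition-derivative contributions assemble into a coboundary, is the main obstacle; without the jet extension property the matching data would remain merely local and the residual discrepancy would not in general be a coboundary. The remaining steps are the formal Taylor bookkeeping already used in the definition of $u_n$.

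Finally, since $w_j^\flat$ and $\tilde w_j$ agree modulo $w_j^{n+1}$, replacing $w_j^\flat$ by $\tilde w_j$ alters only the order-$(n+1)$ coefficient, by the local $0$-cochain $\{a_j^{(n+1)}\}$, and hence changes the Ueda cocycle merely by the coboundary $\delta\{a_j^{(n+1)}\}$. Combining the two comparisons gives
\[
u_n(S,X;\{\tilde w_j\})=\lambda^{c}\,u_n(S,X;\{w_j\})+\delta(\cdots)
\]
in $H^1(S,N_{S/X}^{-n})\cong H^1(S,\mathcal{O}_S)$ for an appropriate $c\in\mathbb{Z}$. As $\lambda$ is a nonvanishing global function, multiplication by $\lambda^{c}$ is an automorphism of $\mathcal{O}_S$ inducing an isomorphism on $H^1$ that fixes the zero class, and the coboundary term vanishes in cohomology; therefore the condition $u_n(S,X;\{w_j\})=0$ is independent of the chosen type-$n$ system, which is the assertion.
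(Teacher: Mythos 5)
Your proposal is correct and follows essentially the same route as the paper's proof: observe that the leading coefficient $\lambda=a^{(1)}$ (and, at each subsequent stage, the lowest-order discrepancy coefficient) glues to a global section of $\mathcal{O}_S$ because both systems are of type $n$ with $t_{jk}\equiv 1$, invoke the $n$-jet extension property to promote it to a compatible $n$-jet along $S$, correct the defining functions order by order, and conclude that the cocycle changes only by the factor $\lambda^{c}$ (the paper computes $c=n$) and by coboundaries, so the vanishing condition is preserved. The only differences are presentational: the paper organizes the "order-by-order global matching" you describe as an explicit induction on the lowest degree $\nu$ of $\widehat{w}_j-w_j$, splitting into the cases $\nu\geq n+1$ and $\nu<n+1$.
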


\begin{proof}
Let $\{(W_j, w_j)\}$ and $\{(W_j, \widehat{w}_j)\}$ be systems of type $n$. 
Set $V_{j}=W_{j}\cap S$ for each $j$. 
Consider the expansion 
\[
\widehat{w}_j=a^{(1)}_j\cdot w_j+O(w_{j}^2)
\]
by $w_j$. 
Let us refine the system $\{\widehat{w}_{j}\}$ keeping the Ueda class up to nonzero multiplications.
First note that, each $a^{(1)}_j$ 
defines a global function $a^{(1)}\in H^0(S, \mathcal{O}_S^*)$.
In fact, 
\[
a^{(1)}_j|_{V_{jk}}=\left.\frac{\widehat{w}_j}{w_j}\right|_{V_{jk}}=\left.\frac{t_{jk}\widehat{w}_k+O(\widehat{w}_j^2)}{t_{jk}w_k+O(w_j^2)}\right|_{V_{jk}}=\left.\frac{t_{jk}\widehat{w}_k}{t_{jk}w_k}\right|_{V_{jk}}=a^{(1)}_k|_{V_{jk}}.
\]
From the $n$-jet extension property, 
we can take nowhere vanishing holomorphic functions $\widetilde{a}^{(1)}_j\colon W_j\to \mathbb{C}$ 
with $\widetilde{a}^{(1)}_j|_{V_{j}}=a^{(1)}_j$ and 
\[
\widetilde{a}^{(1)}_k-\widetilde{a}^{(1)}_j=O(w_j^{n+1})
\]
on $W_{jk}$ for each $j,k$ 
(by shrinking $W_j$ if necessary). 

Set 
\[
\widehat{w}_j':=\widehat{w}_j/\widetilde{a}^{(1)}_j=w_{j}+O(w_{j}^2)
\]
and consider the new system $\{(W_j, \widehat{w}_j')\}$. 
The change of the coordinates is obtained by$\vspace{4mm}$
\\
\centerline{
$
\begin{array}{lll}
\widehat{w}_k'-\widehat{w}_j'&=&
\frac{1}{\widetilde{a}^{(1)}_j}\cdot 
\left(\frac{\widetilde{a}^{(1)}_j}{\widetilde{a}^{(1)}_k}\cdot\widehat{w}_k-\widehat{w}_j\right)\vspace{2mm}\\
&=&\frac{1}{\widetilde{a}^{(1)}_j}\cdot \left(\widehat{w}_k-\widehat{w}_j+O(\widehat{w}_j^{n+2})\right)\vspace{2mm}\\
&=&\frac{1}{\widetilde{a}^{(1)}_j(z_{j},\widehat{w_{j}})}\cdot 
\Big{(}\widehat{f}^{(n+1)}_{jk}(z_{j})\cdot \widehat{w}_{j}^{n+1}
+O(\widehat{w}_j^{n+2})\Big{)}\vspace{3mm}
\\
&=&\Big{(}\widetilde{a}^{(1)}_j(z_{j},\widehat{w_{j}})\Big{)}^n\cdot 
\Big{(}\widehat{f}^{(n+1)}_{jk}(z_{j})\cdot (\widehat{w}'_{j})^{n+1}
+O((\widehat{w}'_j)^{n+2})\Big{)}\vspace{3mm}
\\
&=&\Big{(}a^{(1)}_j(z_{j})\Big{)}^n\cdot 
\widehat{f}^{(n+1)}_{jk}(z_{j})\cdot (\widehat{w}'_{j})^{n+1}
+O((\widehat{w}'_j)^{n+2}).\vspace{3mm}
\end{array}
$}
\\
Here $\widehat{f}_{jk}^{(n+1)}$ is as in the expansion (\ref{eq:def_u_n}) in \S\ref{section:review_ueda_theory}. 
Note that 
$O(w_j)=O(\widehat{w}_j)=O(\widehat{w}'_j)$, 
$\widetilde{a}^{(1)}_j/~\widetilde{a}^{(1)}_k=1+O(\widehat{w}_j^{n+1})$ and 
$a^{(1)}_j(z_{j})$ is the top term of the expansion of $\widetilde{a}^{(1)}_j(z_{j},\widehat{w}_{j})$
in the variable $\widehat{w}_{j}$.
It follows that 
$$u_n(S, X; \{\widehat{w}_j'\})=(a^{(1)})^n\cdot u_n(S, X; \{\widehat{w}_j\}),$$ 
where $a^{(1)}\neq 0$. 
The system $\{\widehat{w}_j\}$ can be replaced by $\{\widehat{w}_j'\}$ for our purpose (For simplicity, we use the notation $\{\widehat{w}_j\}$ again). 
Thus the expansion is written as
\[
\widehat{w}_j=w_j+a^{(2)}_j\cdot w_j^2+O(w_{j}^3). 
\]

In what follows, we will show the equality 
$u_n(S, X; \{w_j\})=u_n(S, X; \{\widehat{w}_j\})$ 
for 
\textit{any} system $\{(W_j, \widehat{w}_j)\}$ of type $n$ with 
\[
\widehat{w}_j=w_j+a^{(\nu)}_j\cdot w_j^{\nu}+O(w_{j}^{\nu+1})
\]
for each $j$. 
Here $\nu\geq 2$ is the infimum over $j$ of the lowest degrees of $\{\widehat{w}_j-w_j\}$. 
The proof is by induction on $\nu$. 
First, a direct calculation shows that 
\begin{equation}\label{change}
\widehat{w}_{k}-\widehat{w}_{j}=
f_{jk}^{(n+1)}(z_{j})\cdot w_{j}^{n+1}+O(w_{j}^{n+2})
+\Big{(}a^{(\nu)}_k(z_{k}(z_{j},0))-a^{(\nu)}_j(z_{j})\Big{)}\cdot w_{j}^{\nu}+O(w_{j}^{\nu+1}). 
\end{equation}
Note that $a^{(\nu)}_k(z_{k}(z_{j},0))$ is the top term of the expansion of $a^{(\nu)}_k(z_{k}(z_{j},w_{j}))$ in the variable $w_{j}$. 
When $\nu \geq n+1$, 
the equality $u_n(S, X; \{w_j\})=u_n(S, X; \{\widehat{w}_j\})$ clearly holds. 

When $\nu < n+1$,
assume that the equality holds for any system of type $n$ with such an expansion for $\nu>\nu_0\geq 2$. 
Consider a system $\{(W_j, \widehat{w}_j)\}$ of type $n$ with 
\[
\widehat{w}_j=w_j+a^{(\nu_0)}_j\cdot w_j^{\nu_0}+O(w_{j}^{\nu_{0}+1}). 
\]
It is easy to check that $\{a^{(\nu_0)}_j\}$ 
defines a global function $a^{(\nu_0)}\in H^0(S, \mathcal{O}_S)$. 
Thus, by the assumption, 
we can take holomorphic functions $\widetilde{a}^{(\nu_0)}_j\colon W_j\to \mathbb{C}$ 
with 
$\widetilde{a}^{(\nu_0)}_j|_{V_{j}}=a^{(\nu_0)}_j$ and 
$\widetilde{a}^{(\nu_0)}_j-\widetilde{a}^{(\nu_0)}_k=O(w_j^{\nu_0+1})$ on $W_{jk}$ for each $j,k$.  
Then set 
\[
\widehat{w}_j':=\widehat{w}_j-\widetilde{a}^{(\nu_0)}_j\cdot w_j^{\nu_0}
\]
and consider the new system $\{(W_j, \widehat{w}_j')\}$. 
A simple computation shows that 
\[
u_n(S, X; \{\widehat{w}_j\})=u_n(S, X; \{\widehat{w}'_j\})
{\quad\textrm{and}\quad
\widehat{w}_j'=w_{j}+O(w_{j}^{\nu_0+1}). }
\] 
By the inductive assumption, we obtain $u_n(S, X, \{w_j\})=u_n(S, X, \{\widehat{w}_j\})$. 
The proof is completed. 
\end{proof}

\subsubsection{Well-definedness of the condition $``u_{n, m}=0"$}\label{welldefuv}

\begin{lemma}\label{lemma_well-def_of_u_n_m_z}
The class $u_{n, m}(C,S,X;\{w_j\})$ does not depend on the choice of the type $(1,0)$ extension $\{z_j\}$ of $\{y_{j}\}$.
\end{lemma}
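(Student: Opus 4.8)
The plan is to fix the system $\{w_j\}$ (together with $\{x_j\}$ and $\{y_j\}$) and compare the $(n,m)$-th Ueda class computed from two different type $(1,0)$ extensions $\{z_j\}$ and $\{\widehat{z}_j\}$ of $\{y_j\}$. The guiding principle is that, once $\{w_j\}$ is fixed, the cocycle $\{g_{jk}^{(n+1,m)}\}$ is extracted from the single intrinsic function $t_{jk}w_k-w_j$ and only sees its behaviour along $V_{jk}$, where the two extensions agree. First I would record the basic relation between the extensions: since $z_j|_{V_j}=\widehat{z}_j|_{V_j}=y_j$, the difference $z_j-\widehat{z}_j$ vanishes on $\{w_j=0\}$, so one may write $z_j=\widehat{z}_j+w_j\cdot\psi_j$ for some holomorphic $\psi_j$ on $W_j$. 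This is the only way in which changing the extension reparametrizes the coordinates, and it moves the $z$-direction only by a multiple of $w_j$.

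The key step is to check that the coefficient of $w_j^{n+1}$ in the expansion (\ref{eq:def_u_n}) is insensitive to this reparametrization. The function $t_{jk}w_k-w_j$ does not depend on the choice of extension, and its vanishing order along $V_{jk}=\{w_j=0\}$ is intrinsic; hence the type-$n$ part of the type $(n,m)$ condition (vanishing of all $w_j^\nu$-coefficients with $\nu\leq n$) holds simultaneously for both extensions. Writing the expansion in the $\{z_j\}$-coordinates as $t_{jk}w_k-w_j=f_{jk}^{(n+1)}(x_j,z_j)\,w_j^{n+1}+O(w_j^{n+2})$ and substituting $z_j=\widehat{z}_j+w_j\psi_j$, the Taylor expansion $f_{jk}^{(n+1)}(x_j,\widehat{z}_j+w_j\psi_j)=f_{jk}^{(n+1)}(x_j,\widehat{z}_j)+O(w_j)$ shows that the $w_j^{n+1}$-coefficient computed in the $\{\widehat{z}_j\}$-coordinates is again $f_{jk}^{(n+1)}(x_j,\widehat{z}_j)$, the very same coefficient function with $\widehat{z}_j$ in place of $z_j$. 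In particular, since $f_{jk}^{(n+1)}(x_j,\cdot)$ vanishes to order $m$ in its second variable by (\ref{eq:def_u_nm}), the remaining half of the type condition is preserved and $\{w_j\}$ is again of type $(n,m)$ for $\{\widehat{z}_j\}$.

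It then remains to compare the leading $z$-jets. On $V_{jk}$ both $z_j$ and $\widehat{z}_j$ restrict to $y_j$, so the coefficient function $f_{jk}^{(n+1)}$ restricts to one and the same function of $y_j$; expanding it in $y_j$ around $U_{jk}=\{y_j=0\}$ and reading off the coefficient of $y_j^m$ therefore produces the same $g_{jk}^{(n+1,m)}(x_j)$ in both cases. Consequently the two $1$-cocycles $\{(U_{jk},g_{jk}^{(n+1,m)})\}$ coincide term by term, and $u_{n,m}(C,S,X;\{z_j\})=u_{n,m}(C,S,X;\{\widehat{z}_j\})$. I expect the only delicate point to be the bookkeeping in the second paragraph: one must be sure that feeding $z_j=\widehat{z}_j+w_j\psi_j$ into the higher-order terms $O(w_j^{n+2})$ cannot contaminate the $w_j^{n+1}$-coefficient, which is exactly why the reparametrization must enter with a factor of $w_j$. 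Unlike Lemma \ref{lem:jet_ext_welldef_unm}, here no jet extension property is needed and the class is unchanged on the nose, not merely up to a nonzero multiple, because we modify only the extension $\{z_j\}$ and keep the normal coordinate $\{w_j\}$ fixed.
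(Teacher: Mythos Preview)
Your argument is correct and follows essentially the same idea as the paper's proof: the coefficient $g_{jk}^{(n+1,m)}(x_j)$ can be read off from the restriction of $f_{jk}^{(n+1)}$ to $V_{jk}$, where any extension of $\{y_j\}$ restricts to $y_j$, so the resulting cocycle is independent of the choice of extension. The paper states this in one line, writing $g_{jk}^{(n+1,m)}(x_j)=\left.\dfrac{f_{jk}^{(n+1)}(x_j,y_j)}{y_j^m}\right|_{y_j=0}$; your second paragraph (tracking the substitution $z_j=\widehat{z}_j+w_j\psi_j$ through the expansion) is more than is strictly needed, but it is correct and makes explicit why the $O(w_j^{n+2})$ terms cannot contaminate the $w_j^{n+1}$-coefficient.
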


\begin{proof}
This is immediately from the definition of $u_{n, m}$. 
In fact, after restricting (\ref{eq:def_u_nm})
on $S$, 
we have $g_{jk}^{(n+1, m)}(x_j) =\frac{f_{jk}^{(n+1)}(x_j, y_j)}{y_j^m}\Big{|}_{y_{j}=0}$ and 
it shows the independence.
\end{proof}

\begin{lemma}\label{lemma_well-def_of_u_n_m}
The condition $u_{n,m}(C,S,X; \{w_j\})=0$
does not depend on the choice of the system $\{w_j\}$ of type $(n,m)$ if
the pair $(S,X)$ satisfies the $n$-jet extension property and $t_{jk}\equiv 1$.
\end{lemma}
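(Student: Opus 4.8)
The plan is to mimic the codimension-one argument of Lemma~\ref{lem:jet_ext_welldef_unm}, now carrying along the second variable $z_j$ and extracting the $(n,m)$-class by passing to the $z_j^m$-coefficient at the very end. First I would fix one type $(1,0)$ extension $\{z_j\}$ of $\{y_j\}$: by Lemma~\ref{lemma_well-def_of_u_n_m_z} the class $u_{n,m}$ is insensitive to this choice, so it suffices to compare two systems $\{w_j\}$ and $\{\widehat{w}_j\}$ of type $(n,m)$ sharing the same $\{x_j\},\{y_j\},\{z_j\}$. Writing $\widehat{w}_j=a_j^{(1)}(x_j,z_j)\cdot w_j+O(w_j^2)$, the relations $\widehat{w}_j=\widehat{w}_k+O(w_j^2)$ and $w_j=w_k+O(w_j^2)$ (valid since $t_{jk}\equiv 1$) force $a_j^{(1)}=a_k^{(1)}$ on $V_{jk}$, so $\{a_j^{(1)}\}$ glues to a nowhere vanishing $a^{(1)}\in H^0(S,\mathcal{O}_S^*)$. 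Using the $n$-jet extension property I would lift $a^{(1)}$ to nowhere vanishing $\widetilde{a}_j^{(1)}$ on $W_j$ with $\widetilde{a}_j^{(1)}-\widetilde{a}_k^{(1)}=O(w_j^{n+1})$ and replace $\widehat{w}_j$ by $\widehat{w}_j/\widetilde{a}_j^{(1)}$. Exactly as in Lemma~\ref{lem:jet_ext_welldef_unm}, this rescaling multiplies the coefficient $f_{jk}^{(n+1)}$ of (\ref{eq:def_u_n}) by $(a^{(1)})^n$; since the lowest $z_j$-order of $f_{jk}^{(n+1)}$ is $m$, passing to the $z_j^m$-coefficient multiplies $g_{jk}^{(n+1,m)}$ of (\ref{eq:def_u_nm}) by the value $(a^{(1)}|_C)^n\in H^0(C,\mathcal{O}_C^*)$, which is nowhere vanishing, so the condition $u_{n,m}=0$ is preserved. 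Thus I may assume $\widehat{w}_j=w_j+O(w_j^2)$.

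Next I would run an induction on $\nu$, the infimum over $j$ of the lowest $w_j$-degree of $\widehat{w}_j-w_j$, writing $\widehat{w}_j=w_j+a_j^{(\nu)}(x_j,z_j)\cdot w_j^{\nu}+O(w_j^{\nu+1})$. For $2\le\nu\le n$, the type $(n,m)$ condition for both systems forces every $w_j^{\nu}z_j^{\mu}$-coefficient of $\widehat{w}_k-\widehat{w}_j$ and of $w_k-w_j$ to vanish, because $(\nu,\mu)<(n+1,0)\le(n+1,m)$ in the lexicographic order for \emph{all} $\mu\ge 0$; hence $a_k^{(\nu)}=a_j^{(\nu)}$ on overlaps and, using $t_{jk}\equiv 1$, the system $\{a_j^{(\nu)}\}$ glues to a global $a^{(\nu)}\in H^0(S,\mathcal{O}_S)$. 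I would lift it by the $n$-jet extension property to $\widetilde{a}_j^{(\nu)}$ with $\widetilde{a}_j^{(\nu)}-\widetilde{a}_k^{(\nu)}=O(w_j^{n+1})$ and set $\widehat{w}_j':=\widehat{w}_j-\widetilde{a}_j^{(\nu)}\cdot w_j^{\nu}$. A short computation, using $w_k=w_j+O(w_j^{n+1})$ and $\nu\ge 2$, shows the correction alters $\widehat{w}_k-\widehat{w}_j$ only at order $O(w_j^{\nu+n})\subseteq O(w_j^{n+2})$, so $f_{jk}^{(n+1)}$, and therefore $u_{n,m}$, are literally unchanged, while $\widehat{w}_j'=w_j+O(w_j^{\nu+1})$ raises $\nu$. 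Iterating reduces the situation to $\nu\ge n+1$.

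In the terminal case $\nu\ge n+1$ I would compare the $w_j^{n+1}$-coefficients directly. If $\nu>n+1$ they coincide and $u_{n,m}$ is unchanged; if $\nu=n+1$, then $\widehat{f}_{jk}^{(n+1)}=f_{jk}^{(n+1)}+\left(a_k^{(n+1)}-a_j^{(n+1)}\right)$, and passing to the $z_j^m$-coefficient over $C$ changes $g_{jk}^{(n+1,m)}$ by a coboundary in $N_{S/X}|_C^{-n}\otimes N_{C/S}^{-m}$. Hence $u_{n,m}(\{\widehat{w}_j\})=u_{n,m}(\{w_j\})$ in $H^1(C,N_{S/X}|_C^{-n}\otimes N_{C/S}^{-m})$, and combined with the rescaling of the first paragraph this shows the condition $u_{n,m}=0$ is independent of the system $\{w_j\}$. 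The main obstacle is the bidegree bookkeeping: verifying that the type $(n,m)$ hypothesis really forces the \emph{entire} coefficient $a_j^{(\nu)}(x_j,z_j)$, to all orders in $z_j$, to glue for $\nu\le n$ — this is exactly where both $t_{jk}\equiv 1$ and the lexicographic definition of type enter — and confirming that extraction of the $z_j^m$-coefficient commutes with the rescaling and the successive subtractions, which rests on the fact that the lowest $z_j$-order of $f_{jk}^{(n+1)}$ is precisely $m$.
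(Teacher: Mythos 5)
Your overall strategy (normalize the leading coefficient via the $n$-jet extension property, induct on the lowest $w_j$-degree $\nu$ of $\widehat{w}_j-w_j$, then treat $\nu=n+1$ as the critical case) is the same as the paper's, and the steps up to and including the reduction to $\nu\geq n+1$ are sound. The gap is in your terminal case. When $\nu=n+1$ you assert that passing to the $z_j^m$-coefficient of $a_k^{(n+1)}-a_j^{(n+1)}$ produces a coboundary in $N_{S/X}|_C^{-n}\otimes N_{C/S}^{-m}$. This is only true if $a_j^{(n+1)}(x_j,z_j)=O(z_j^m)$, and nothing in your argument guarantees that: the type $(n,m)$ hypothesis on both systems forces the $z_j^\mu$-coefficients of the \emph{difference} $a_k^{(n+1)}-a_j^{(n+1)}$ to vanish for $\mu<m$, i.e.\ it makes each $\{a_j^{(n+1,\mu)}\}$ a Čech cocycle, not zero. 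A nonzero lower-order coefficient $a_k^{(n+1,\mu)}(x_k)$ with $\mu<m$ contaminates the $z_j^m$-coefficient of $a_k^{(n+1)}(x_k(x_j,z_j),z_k)$ through the $z_j$-dependence of $x_k$, contributing terms of the form $\tfrac{1}{(m-\mu)!}\partial_{z_j}^{m-\mu}\bigl[a_k^{(n+1,\mu)}(x_k(x_j,z_j))\bigr]\big|_{z_j=0}$, which are not coboundaries of anything in sight. Your closing remark that the bookkeeping ``rests on the fact that the lowest $z_j$-order of $f_{jk}^{(n+1)}$ is precisely $m$'' addresses the wrong object: it is the lowest $z_j$-order of the discrepancy $a_j^{(n+1)}$, not of $f_{jk}^{(n+1)}$, that must be controlled.

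The paper closes exactly this gap with a second induction in the $z_j$-direction: letting $\mu<m$ be the lowest $y_j$-degree of $\{a_j^{(n+1)}\}$, the vanishing of $\delta\{a_j^{(n+1,\mu)}\}$ makes $\{a_j^{(n+1,\mu)}\}$ a global section of the flat bundle $N_{C/S}^{-\mu}$ over the \emph{compact} curve $C$; the maximum principle then forces it to be a constant, after which the correction $\widehat{w}_j\mapsto\widehat{w}_j-a^{(n+1,\mu)}z_j^{\mu}w_j^{n+1}$ (which transforms consistently precisely because the section is constant and $s_{jk}y_k=y_j$) raises $\mu$, and iterating reduces to $\mu\geq m$, where your coboundary claim becomes correct. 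This is the one place in the lemma where compactness of $C$ is actually used, and it is missing from your proposal.
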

\begin{proof}
Let $\{(W_j, w_j)\}$ and $\{(W_j, \widehat{w}_j)\}$ be systems of type $(n,m)$. 
Denote the expansion coefficients by 
$f_{jk}^{(n+1)}(x_j, z_j), \widehat{f}_{jk}^{(n+1)}(x_j, z_j), g_{jk}^{(n+1,m)}(x_{j})$
and $\widehat{g}_{jk}^{(n+1,m)}(x_{j})$ 
as in the expansions (\ref{eq:def_u_n}) and (\ref{eq:def_u_nm}) in \S \ref{section:review_ueda_theory}. 
The proof is the same manner as Lemma \ref{lem:jet_ext_welldef_unm}.
After rescaling the defining functions, 
we may assume that 
\[
\widehat{w}_j=w_j+a^{(\nu)}_j(x_j, z_j)\cdot w_j^{\nu}+O(w_j^{\nu+1})
\]
while keeping the desired property for the Ueda type. 

Let us show the critical case $\nu=n+1$
(The other cases can be proved similarly. 
If necessarily, we can replace the system while keeping the desired property). 
From the equation (\ref{eq:def_u_nm}) and (\ref{change}), 
\[
\widehat{g}_{jk}^{(n+1, m)}(x_j)\cdot z_j^m=g_{jk}^{(n+1, m)}(x_j)\cdot z_j^m+a^{(n+1)}_k(x_j,z_j)-a^{(n+1)}_j(x_j,z_j)+O(z_j^{m+1}).
\]
Consider the expansion
\[
a_j^{(n+1)}(x_j, y_j)=a_j^{(n+1,\mu)}(x_j)\cdot y_{j}^{\mu}+O(y_{j}^{\mu+1})
\]
on $V_{j}$, 
where $\mu\geq 1$ is the infimum over $j$ of the lowest degrees of $\{a_{j}^{(n+1)}\}$. 
As repeating the same argument as Lemma \ref{lem:jet_ext_welldef_unm} for $\{a_{j}^{(n+1,\mu)}\}$, 
$\{g_{jk}^{(n+1,m)}\}$, and $\{\widehat{g}_{jk}^{(n+1,m)}\}$ on $(C,S)$, 
we can show the assertion. 
In fact, when $\mu\geq m$, 
the equality $u_{n,m}(C, S, X; \{w_j\})=u_{n,m}(C, S, X; \{\widehat{w}_j\})$ clearly holds.
When $\mu<m$, $\{a_{j}^{(n+1,\mu)}\}$ defines 
the non-zero constant $a^{(n+1,\mu)} \in H^0(C, N_{C/S}^{-\mu})$ 
(recall the assumption $s_{jk}y_{k}=y_{j}$ on $V_{jk}$).
Since $C$ is compact and $N_{C/S}^{-\mu}$ is flat, it follows by applying the maximal value principle 
to the plurisubharmonic function $|a^{(n+1,\mu)}|$ that $a^{(n+1,\mu)}$ is a constant 
(note that especially 
$N_{C/S}^{-\mu}$ is holomorphically trivial).
Then we set 
\[
\widehat{w}_j':=\widehat{w}_j-a^{(n+1,\mu)}\cdot z_{j}^{\mu}w_j^{n+1} = 
w_j+O(z_{j}^{\mu+1})w_{j}^{n+1}+O(w_{j}^{n+2}) 
\]
and consider the new system $\{(W_j, \widehat{w}'_j)\}$. 
By the direct calculation of $\widehat{w}'_k-\widehat{w}'_j$, we obtain 
$u_{n,m}(C,S,X; \{\widehat{w}_j\})=u_{n,m}(C,S,X; \{\widehat{w}'_j\})$. 
As repeating this procedure inductively, it reduces to the case $m=\mu$, so that the proof is completed. 
\end{proof}

\subsubsection{Well-definedness of the condition $``v_{n, m}=0"$} 

\begin{lemma}
The condition $v_{n, m}(C,S,X;\{z_j\})=0$ does not depend on the choice of the system $\{w_j\}$ of type $(1,0)$.
\end{lemma}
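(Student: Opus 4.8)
The plan is to fix $\{z_j\}$ (together with $\{x_j\}$ and $\{y_j\}$) and to compare the extension cocycles $\{(U_{jk},q_{jk}^{(n,m)})\}$ attached to two systems $\{w_j\}$ and $\{\widehat{w}_j\}$, both of type $(1,0)$, showing that they differ by multiplication by a single global unit on $C$. This is in the same spirit as Lemma \ref{lemma_well-def_of_u_n_m_z} and parallels the multiplicative behaviour of $u_n$ under rescaling of the defining function in the proof of Lemma \ref{lem:jet_ext_welldef_unm}. The crucial simplification, which explains why this lemma needs neither the jet extension property nor $t_{jk}\equiv 1$, is that the function $s_{jk}z_k-z_j$ is \emph{fixed} when we change $\{w_j\}$: only the variable in which it is expanded changes, so the leading coefficient is merely rescaled by a unit that is automatically global on $C$.

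First I would write $\widehat{w}_j=h_j\,w_j$ with $h_j\in H^0(W_j,\mathcal{O}_{W_j}^*)$, which is legitimate since $w_j$ and $\widehat{w}_j$ are two reduced holomorphic defining functions of the smooth hypersurface $S=\{w_j=0\}$ on $W_j$. Because both systems are normalized so that
\[
(w_j/w_k)|_{V_{jk}}\equiv t_{jk}\equiv(\widehat{w}_j/\widehat{w}_k)|_{V_{jk}},
\]
one gets $h_j|_{V_{jk}}=h_k|_{V_{jk}}$, so $\{h_j|_S\}$ glues to a global nowhere-vanishing function $\alpha\in H^0(S,\mathcal{O}_S^*)$, and its restriction $\alpha|_C\in H^0(C,\mathcal{O}_C^*)$ is again a unit.

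Next I would substitute $w_j=\widehat{w}_j/h_j$ into the defining expansion $s_{jk}z_k-z_j=p_{jk}^{(n)}(x_j,z_j)\,w_j^{n}+O(w_j^{n+1})$. Since $h_j|_{\{w_j=0\}}=\alpha_j(x_j,z_j)$ one has $w_j^{n}=\alpha_j(x_j,z_j)^{-n}\,\widehat{w}_j^{\,n}+O(\widehat{w}_j^{\,n+1})$, so the order-$n$ coefficient with respect to $\widehat{w}_j$ is exactly $\widehat{p}_{jk}^{(n)}=\alpha_j^{-n}\,p_{jk}^{(n)}$; in particular the expansion still begins at order $n$ and its lowest $z_j$-degree is still $m$ (as $\alpha_j(x_j,0)\neq 0$), so $\{z_j\}$ remains of type $(n,m)$ with respect to $\{\widehat{w}_j\}$ and $v_{n,m}(C,S,X;\{z_j\})$ is defined for both systems. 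Extracting the coefficient of $z_j^{m}$ and evaluating $\alpha_j$ on $C$ then yields $\widehat{q}_{jk}^{(n,m)}=(\alpha|_C)^{-n}\,q_{jk}^{(n,m)}$, hence
\[
v_{n,m}(C,S,X;\{z_j\})_{\{\widehat{w}_j\}}=(\alpha|_C)^{-n}\cdot v_{n,m}(C,S,X;\{z_j\})_{\{w_j\}}
\]
in $H^1(C,N_{S/X}|_C^{-n}\otimes N_{C/S}^{-m+1})$. As multiplication by the global unit $(\alpha|_C)^{-n}$ sends coboundaries to coboundaries and is invertible, the condition $v_{n,m}=0$ coincides for the two systems.

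The hard part will be the bookkeeping that confirms the $z_j^{m}w_j^{n}$-coefficient transforms \emph{exactly} by $(\alpha|_C)^{-n}$: I must verify that the $z_j$-dependence of $\alpha_j$ and the $O(\widehat{w}_j)$-part of $h_j$ contribute only to strictly higher $z_j$- or $\widehat{w}_j$-orders, and that the lower-order terms which vanish by the type $(n,m)$ hypothesis are not resurrected by the coordinate change. Unlike in Lemma \ref{lem:jet_ext_welldef_unm}, no inductive reduction of higher-order corrections is needed here, precisely because $s_{jk}z_k-z_j$ is not itself altered by the change of $\{w_j\}$; once the preservation of the extension type $(n,m)$ and the form of the leading coefficient are checked, the conclusion is immediate.
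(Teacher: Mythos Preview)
Your proposal is correct and follows essentially the same approach as the paper's proof: writing $\widehat{w}_j=h_j w_j$ with $h_j|_S$ gluing to a global unit $\alpha\in H^0(S,\mathcal{O}_S^*)$ (the paper's $a^{(1)}$), then observing that the leading coefficient $p_{jk}^{(n)}$ is multiplied by $\alpha^{-n}$ and hence $q_{jk}^{(n,m)}$ by $(\alpha|_C)^{-n}$ (the paper's $(a^{(1,0)})^{-n}$). Your write-up is in fact more explicit than the paper's about the bookkeeping that the type $(n,m)$ is preserved and that the higher-order parts of $h_j$ contribute only to higher $\widehat{w}_j$- or $z_j$-orders.
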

\begin{proof}
Let 
$\{z_{j}\}$ be an $(n,m)$ type extension of $\{y_{j}\}$ and
$\{(W_j, w_j)\}$, $\{(W_j, \widehat{w}_j)\}$ be systems of type $(1,0)$.  
The expansion of $z_{j}$ in $\widehat{w}_j$ is denoted by 
\[
 s_{jk}z_k=z_j+\widehat{p}_{jk}^{(n)}(x_j, z_j)\cdot \widehat{w}_j^{n}+O(\widehat{w}_{j}^{n+1}). 
\]
The expansion of $\widehat{w}_{j}$ in $w_{j}$ is written by 
\[
\widehat{w}_j=a^{(1)}_j(x_j, z_j)\cdot w_j+O(w_j^2)
\]
and $\{a^{(1)}_{j}\}$ defines a global function $a^{(1)}\colon S\to \mathbb{C}^{\ast}$ 
as in the proof of Lemma \ref{lem:jet_ext_welldef_unm}. 
Moreover, the top terms $\{a^{(1,0)}_j(x_j)\}$ 
of the expansion of $a^{(1)}_{j}(x_j, y_j)$ in $y_{j}$ on $V_{j}$ defines 
a global (constant) function $a^{(1,0)}\colon C\to \mathbb{C}^{\ast}$. 
Then we have
\[
\widehat{p}_{jk}^{(n)}(x_j, y_j)=(a^{(1)})^{-n} \cdot {p}_{jk}^{(n)}(x_j, y_j)
=(a^{(1,0)})^{-n}\cdot q^{(n, m)}_{jk}(x_j)\cdot y_j^m+O(y_j^{m+1}), 
\]
so that we conclude that $\widehat{q}^{(n, m)}_{jk}(x_{j})=(a^{(1,0)})^{-n}\cdot q^{(n, m)}_{jk}(x_j)$ and the claim follows. 
\end{proof}

Let $\{w_j\}$ be a system of type $(n, 0)$ and 
the extension class $v_{n,m}(C,S,X; \{z_j\})$ is defined. 

\begin{lemma}\label{lem:weldef_v_n_m_z_j}
For each $m\geq 0$, $v_{n,m}(C,S,X; \{z_j\})=0$ for any extension $\{z_{j}\}$ of type $(n,m)$ 
if the pair $(S,X)$ satisfies the $n$-jet extension property and $t_{jk}\equiv 1$,  $s_{jk}\equiv 1$.
\end{lemma}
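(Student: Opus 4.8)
The plan is to mimic the inductive order-raising in the proof of Lemma \ref{lem:jet_ext_welldef_unm}: I will use the $n$-jet extension property to manufacture a reference extension of $\{y_j\}$ whose extension obstruction vanishes through order $n$, and then compare an arbitrary type $(n,m)$ extension against it, exhibiting $\{q_{jk}^{(n,m)}\}$ as a coboundary. First I would produce the reference. Since $s_{jk}\equiv1$, the defining functions $\{y_j\}$ coincide on overlaps and glue to a global holomorphic $y\in H^0(S,\mathcal{O}_S)$ with $\mathrm{div}(y)=C$. Applying the $n$-jet extension property to $y$ yields a section of $\mathcal{O}_X/I_S^{n+1}$ over $S$ lifting $y$, i.e. holomorphic functions $z_j^0$ on $W_j$ with $z_j^0|_{V_j}=y_j$ and $z_k^0-z_j^0=O(w_j^{n+1})$; here I also use $t_{jk}\equiv1$ and that $\{w_j\}$ is of type $(n,0)$, so that $w_k=w_j+O(w_j^{n+1})$.

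Next, for an arbitrary type $(n,m)$ extension $\{z_j\}$ I set $\phi_j:=z_j-z_j^0$. Both $z_j$ and $z_j^0$ restrict to $y_j$ on $V_j$, so $\phi_j=O(w_j)$, and since $z_k^0-z_j^0=O(w_j^{n+1})$ while $z_k-z_j=O(w_j^n)$, we have $\phi_k-\phi_j=O(w_j^n)$. Writing $\phi_j=\sum_{l\ge1}\phi_j^{(l)}(x_j,z_j)\,w_j^l$, I raise the vanishing order of $\phi_j$ one step at a time. If $\nu<n$ is the lowest order present, then the coefficient of $w_j^\nu$ in $\phi_k-\phi_j$ vanishes, and because $\nu$ is the lowest order no correction terms from the coordinate change intervene (recall $t_{jk}\equiv s_{jk}\equiv1$), so $\phi_k^{(\nu)}=\phi_j^{(\nu)}$ under the transition on $S$ and $\{\phi_j^{(\nu)}\}$ glue to a global function in $H^0(S,\mathcal{O}_S)$. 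Extending it by the $n$-jet extension property to $\widetilde\phi_j^{(\nu)}$ on $W_j$ with $\widetilde\phi_k^{(\nu)}-\widetilde\phi_j^{(\nu)}=O(w_j^{n+1})$ and replacing $z_j$ by $z_j-\widetilde\phi_j^{(\nu)}w_j^\nu$ raises the order of $\phi_j$ to $\nu+1$; since $w_k=w_j+O(w_j^{n+1})$, this correction alters $z_k-z_j$ only by $O(w_j^{n+1})$ and hence leaves $p_{jk}^{(n)}$, and a fortiori $q_{jk}^{(n,m)}$, unchanged.

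Iterating, I reduce to the case $\phi_j=O(w_j^n)$. Then the coefficient of $w_j^n$ in $\phi_k-\phi_j$ is exactly $p_{jk}^{(n)}=\phi_k^{(n)}-\phi_j^{(n)}$ in the twisted sense — again correction-free, as $w_j^n$ is now the lowest order present — so $\{p_{jk}^{(n)}\}$ is the coboundary of $\{\phi_j^{(n)}\}$. Passing to the coefficient of $z_j^m$ and restricting to $C$ then exhibits $\{q_{jk}^{(n,m)}\}$ as the coboundary of the $0$-cochain $\{(\text{the coefficient of }z_j^m\text{ in }\phi_j^{(n)})|_C\}$ valued in $N_{S/X}|_C^{-n}\otimes N_{C/S}^{-m+1}$, whence $v_{n,m}(C,S,X;\{z_j\})=0$.

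The main obstacle is organizing this induction so that each correction genuinely raises the vanishing order of $\phi_j$ while disturbing $z_k-z_j$ only beyond order $w_j^n$; this is precisely what keeps $q_{jk}^{(n,m)}$ fixed until the final, correction-free identification of it as a coboundary, and it is where the careful Taylor bookkeeping of Lemma \ref{lem:jet_ext_welldef_unm} and Lemma \ref{lemma_well-def_of_u_n_m} must be reproduced. Note that the maximum-principle step used in Lemma \ref{lemma_well-def_of_u_n_m} is not needed here, since $t_{jk}\equiv s_{jk}\equiv1$ already trivializes the relevant normal bundles and every gluing above takes place in the untwisted sheaf $\mathcal{O}_S$.
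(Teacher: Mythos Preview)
Your proposal is correct and follows essentially the same approach as the paper: both arguments use $s_{jk}\equiv 1$ to glue $\{y_j\}$ into a global function on $S$, invoke the $n$-jet extension property to produce a reference extension $\{z_j^0\}$ with $z_k^0-z_j^0=O(w_j^{n+1})$, and then compare an arbitrary type $(n,m)$ extension against this reference to exhibit $\{q_{jk}^{(n,m)}\}$ as a coboundary. The only cosmetic difference is that you carry out the order-raising by explicitly modifying $z_j$ step by step (checking at each stage that the correction alters $z_k-z_j$ only at order $w_j^{n+1}$ and hence preserves $q_{jk}^{(n,m)}$), whereas the paper leaves $\widehat{z}_j$ untouched and reads off the comparison directly from the expansion $\widehat{z}_j=z_j+a_j^{(\nu)}w_j^\nu+\cdots$, dispatching the cases $\nu<n$ with a terse ``similarly''.
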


\begin{proof}
By the assumption $s_{jk}\equiv 1$, $\{y_{j}\}$ defines a global function on $S$. 
From the $n$-jet extension property, there exists an extension $\{z_{j}\}$ of $\{y_{j}\}$ with
\begin{equation}\label{expz}
z_k=z_j+O(w_j^{n+1}), 
\end{equation}
so that $v_{n,m}(C,S,X; \{z_{j}\})=0$ holds. 

Let $\{\widehat{z}_{j}\}$ be another extension of $\{y_j\}$ with type  $(n,m)$.  
The expansion coefficients are denoted by 
$p_{jk}^{(n+1)}(x_j, z_j),$ $\widehat{p}_{jk}^{(n+1)}(x_j, \widehat{z}_j),$ 
$q_{jk}^{(n+1,m)}(x_{j})$
and $\widehat{q}_{jk}^{(n+1,m)}(x_{j})$. 
Since $\widehat{z}_{j}|_{V_{j}}=z_j|_{V_j}=y_j$, 
the expansion of $\widehat{z}_{j}$ in $w_{j}$ is written by
\[
\widehat{z}_j= z_j+a^{(\nu)}_j(x_j, z_j)\cdot w_{j}^\nu+O(w_{j}^{\nu+1}), 
\]
for $\nu \geq 1$.
Then, by (\ref{expz}), 
\[
\widehat{z}_k-\widehat{z}_j
=(a^{(\nu)}_k(x_j, z_j)-a^{(\nu)}_j(x_j, z_j))\cdot w_{j}^{\nu}+O(w_{j}^{\nu+1})+O(w_{j}^{n+1}). 
\]
When the critical case $\nu=n$,  
as repeating same procedure for the expansion of $a^{(n)}_k(x_j, z_j)-a^{(n)}_j(x_j, z_j)$, 
it follows that 
$v_{n,m}(C,S,X; \{\widehat{z_{j}}\}) =v_{n,m}(C,S,X; \{z_{j}\}) =0$. 
The other cases can be proved similarly.
\end{proof}

In the above, under the jet extension property, we showed the well-definedness of obstruction classes. 
However, the authors do not know in general case.

\begin{problem}
Is there an example of $(C,S,X)$ for which the classes $u_{n,m}$ and $v_{n,m}$ actually depend on 
the choice of the system $\{w_{j}\}$ and the extension $\{z_{j}\}$, especially, 
in which the jet extension property fails ? 
\end{problem}

\subsection{Proof of jet extension property}\label{section:jetextproperty}
Let us go back to the situation where $L$ is a leaf of a Levi-flat hypersurface 
$(M,\mathcal{F})$ in $X$. 
Here we use the notation $L$ instead of $S$ in the subsections above.
As shrinking $X$ together with $L$ and $M$,  
we may assume that a connected component of $L$ is embedded in $X$.  
In \S4 below, it will be assumed only a small tubular neighborhood in $X$ of an elliptic curve $C$ contained in $L$.

\begin{proposition}\label{prop:jet_ext_welldef}
Let $(M,\mathcal{F})$, $L$ and $X$ be as above.  
Assume that (i) the holonomy group $\mathcal{H}(L)$ along $L$ is generated by $C^2$-flat diffeomorphisms 
and  
(ii) there exists a $C^\infty$ retraction $p\colon M\to L$
which is leafwise holomorphic. 
Then, the pair $(L,X)$ satisfies \textit{the jet extension property}.
\end{proposition}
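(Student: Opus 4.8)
The plan is to verify the $n$-jet extension property for every $n$ by constructing, for a given $g\in H^0(L,\mathcal{O}_L)$, an explicit compatible family of holomorphic $n$-jets extending it. First I would invoke Proposition \ref{prop:BAppend}: since $\mathcal{H}(L)$ is generated by $C^2$-flat diffeomorphisms, there is a system $\{(W_j,w_j)\}$ of holomorphic defining functions for the plaques of $L$ with $N_{L/X}$ holomorphically trivial ($t_{jk}\equiv 1$), and with $(\mathrm{Im}\,w_j)|_{\mathcal{U}_j}=o(|w_j|^2)$, $(\mathrm{Re}\,w_j)|_{\mathcal{U}_j}=u_j+o(u_j^2)$, where $\mathcal{U}_j=W_j\cap M$ and $u_j$ is a leafwise-constant transverse defining function. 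Writing $x_j$ for the ambient holomorphic coordinate on $W_j$ chosen so that $x_j|_{\mathcal{U}_j}$ is a leafwise-holomorphic coordinate, I record the function $\phi_j:=w_j|_{\mathcal{U}_j}$, regarded as a function of $(x_j,u_j)$; note $\phi_j=u_j+o(u_j^2)$, so its $u_j$-linear coefficient is $1$.

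The second input is the retraction. I would set $\widetilde{g}:=g\circ p\in C^\infty(M)$. Because $p$ is a retraction whose restriction to each leaf is holomorphic, we have $\widetilde{g}|_L=g$ and $\widetilde{g}$ is leafwise holomorphic. The crucial observation is that both $\widetilde{g}$ and $\phi_j$ have \emph{holomorphic} transverse Taylor coefficients: for each fixed $u_j$ the leaf $\{u_j=\mathrm{const}\}$ is a complex submanifold of $X$, so the restrictions to it of the holomorphic function $w_j$ and of the leafwise-holomorphic function $\widetilde{g}$ are holomorphic in $x_j$; since $\partial_{u_j}$ commutes with $\bar\partial_{x_j}$, the coefficients $c_j^{(l)}(x_j):=\frac{1}{l!}\partial_{u_j}^l\widetilde{g}|_{u_j=0}$ and the $u_j$-Taylor coefficients of $\phi_j$ are all holomorphic in $x_j$.

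Next I would solve for the local extension. Seeking $G_j=\sum_{k=0}^n a_{j,k}(x_j)\,w_j^{\,k}$ with $G_j|_{\mathcal{U}_j}\equiv\widetilde{g}\ (\mathrm{mod}\ u_j^{n+1})$, I expand $G_j|_{\mathcal{U}_j}=\sum_k a_{j,k}\phi_j^{\,k}$ and compare $u_j$-Taylor coefficients with those of $\widetilde{g}$. Since $\phi_j^{\,k}=u_j^{\,k}+O(u_j^{\,k+1})$, this linear system in the $a_{j,k}$ is triangular with unit diagonal, hence uniquely solvable with holomorphic solutions; in particular $a_{j,0}=c_j^{(0)}=g$, so $G_j|_L=g$.

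Finally, the main point is the gluing, which I expect to be the delicate step. On an overlap, both $G_j$ and $G_k$ restrict to $\widetilde{g}$ modulo the $(n{+}1)$-st transverse order (using $u_k=u_j+O(u_j^3)$, so that order-$n$ vanishing is chart-independent). Expressing their difference $H$ in the coordinates $(x_j,w_j)$, we get a holomorphic $n$-jet with $H|_{\mathcal{U}_j}\equiv 0\ (\mathrm{mod}\ u_j^{n+1})$; the same triangular structure forces all its coefficients to vanish, i.e.\ $G_j\equiv G_k\ (\mathrm{mod}\ w_j^{n+1})$. Thus $\{G_j\}$ is a well-defined global section of $\mathcal{O}_X/I_L^{n+1}$ restricting to $g$, proving the $n$-jet extension property; letting $n$ vary yields the jet extension property. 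The hard part is exactly this last reconciliation: showing that restriction to $M$ is injective on holomorphic jets, which rests on the holomorphicity of the transverse expansion of the approximately-real defining function $w_j$. This is where Levi-flatness (the complexness of the leaves) and the leafwise-holomorphicity of $p$ enter, allowing a single global function $\widetilde{g}$ to enforce gluing even though $L$ is noncompact and the relevant $H^1(L,\mathcal{O}_L)$ need not vanish.
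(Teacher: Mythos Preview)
Your proposal is correct and follows essentially the same approach as the paper's proof. Both arguments use $p^*g$ (the paper writes $p^*f$) as a global leafwise-holomorphic template on $M$, build on each $W_j$ a holomorphic extension whose restriction to $\mathcal{U}_j$ agrees with this template to order $n$ in $u_j$, and then obtain the compatibility $G_j\equiv G_k\pmod{w_j^{n+1}}$ by restricting the holomorphic difference to $\mathcal{U}_{jk}$ and reading off vanishing of the $w_j$-coefficients from the triangular relation $w_j|_{\mathcal{U}_j}=u_j+o(u_j^2)$; the only cosmetic difference is that the paper packages the local construction as an inductive correction (its Lemma~\ref{lem:f_tilde}: set $\widetilde f_j^{(n+1)}=\widetilde f_j^{(n)}-\widetilde\theta_j^{(n)}w_j^n$) whereas you solve the resulting triangular system for the coefficients $a_{j,k}$ all at once, and the paper phrases the gluing as ``the lowest $w_j$-order $\nu$ of the difference satisfies $\nu\ge n$'' rather than invoking injectivity of restriction on holomorphic $n$-jets.
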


\begin{proof}
Since the holonomy along $L$ is $C^2$-flat, 
there exist the data $\{(W_{j},w_{j})\}$ and $\{(\mathcal{U}_{j},u_{j})\}$ 
which satisfy the condition (ii),(iii) and (iv) 
described in \S \ref{section:Barrett-Fornaess}. 
Take a holomorphic function $f\in H^0(L, \mathcal{O}_L)$ and 
denote the restriction $f|_{V_j}$  by $f_j$ . 
From Lemma \ref{lem:f_tilde} below, for $n\geq1$ and each $j$,
we can take a holomorphic function 
$\widetilde{f}^{(n)}_j\colon W_j \to \mathbb{C}$  
such that 
$\widetilde{f}^{(n)}_j|_{V_{j}}=f_j$ and 
$\widetilde{f}^{(n)}_j|_{\mathcal{U}_j}-(p^*f)|_{\mathcal{U}_j}=O(u_j^n)$. 
Let 
\[
\widetilde{f}^{(n)}_{k}-\widetilde{f}^{(n)}_{j}=
\widetilde{a}^{(\nu)}_{jk}\cdot w_j^\nu+O(w_j^{\nu+1}) 
\]
be the expansion on $W_{jk}$, 
where $\widetilde{a}^{(\nu)}_{jk}$ is the pull back of a holomorphic function $a^{(\nu)}_{jk}$ 
on $V_{j}$ by the coordinate projection $W_{j} \to V_{j}$. 
On $\mathcal{U}_{jk}$, the expansion in $u_{j}$ is obtained by
\begin{eqnarray}
(\widetilde{f}^{(n)}_k-\widetilde{f}^{(n)}_j)|_{\mathcal{U}_{jk}}&=&
\widetilde{a}^{(\nu)}_{jk}|_{\mathcal{U}_{jk}}\cdot w_j^\nu|_{\mathcal{U}_{jk}}+O(u_j^{\nu+1})\nonumber \\
&=& (\pi_j^*a^{(\nu)}_{jk}+O(u_j))\cdot (u_j+O(u_j^2))^\nu+O(u_j^{\nu+1})\nonumber \\
&=& \pi_j^*a^{(\nu)}_{jk}\cdot u_j^\nu+O(u_j^{\nu+1}), \nonumber
\end{eqnarray}
where $\pi_j\colon \mathcal{U}_j\to V_j$ is the coordinate projection. 
On the other hand, we have 
\[
(\widetilde{f}^{(n)}_k-\widetilde{f}^{(n)}_j)|_{\mathcal{U}_{jk}}
=(\widetilde{f}^{(n)}_k-p^*f)|_{\mathcal{U}_{jk}}-(\widetilde{f}^{(n)}_j-p^*f)|_{\mathcal{U}_{jk}}
=O(u_j^n). 
\]
Comparing these expansions, we obtain $\nu\geq n$, so that the proposition follows. 
\end{proof}

\begin{lemma}\label{lem:f_tilde}
Let $\{W_j\}, \{w_j\}, \{u_j\}$ and $\{f_j\}$ be as above. 
Then, for $n\geq1$ and each $j$, 
there exist holomorphic functions 
$\widetilde{f}^{(n)}_j\colon W_j \to \mathbb{C}$ such that 
\[
{\widetilde{f}^{(n)}_j|_{V_{j}}=f_j \quad\textrm{and}\quad}
\widetilde{f}^{(n)}_j|_{\mathcal{U}_j}-(p^*f)|_{\mathcal{U}_j}=O(u_j^n).
\]  
\end{lemma}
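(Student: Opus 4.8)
The plan is to construct the functions $\widetilde{f}^{(n)}_j$ by induction on $n$, working separately on each fixed chart $W_j$: no gluing is needed at this stage, since all of the cocycle bookkeeping is carried out later in the proof of Proposition~\ref{prop:jet_ext_welldef}. Fix holomorphic coordinates $(x_j, w_j)$ on $W_j$ with $V_j=\{w_j=0\}$, and let $\varpi_j\colon W_j\to V_j$ be the associated holomorphic projection, so that $\varpi_j^*g$ is a holomorphic extension to $W_j$ of any holomorphic $g$ on $V_j$. For the base case $n=1$ I would simply take $\widetilde{f}^{(1)}_j:=\varpi_j^*f_j$. Since $p$ is a retraction onto $L$ we have $p|_{V_j}=\mathrm{id}$, hence $(p^*f)|_{V_j}=f_j=\widetilde{f}^{(1)}_j|_{V_j}$; as $V_j=\{u_j=0\}\cap\mathcal{U}_j$, the difference $(\widetilde{f}^{(1)}_j-p^*f)|_{\mathcal{U}_j}$ vanishes along $\{u_j=0\}$ and is therefore $O(u_j)$.

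For the inductive step, suppose $\widetilde{f}^{(n)}_j$ has been produced with $(\widetilde{f}^{(n)}_j-p^*f)|_{\mathcal{U}_j}=O(u_j^n)$. The point I would emphasize is that the error $h_j:=(\widetilde{f}^{(n)}_j-p^*f)|_{\mathcal{U}_j}$ is leafwise holomorphic: $\widetilde{f}^{(n)}_j|_{\mathcal{U}_j}$ is the restriction of a holomorphic function to the Levi-flat $M$, and $p^*f$ is leafwise holomorphic because $p$ is leafwise holomorphic and $f$ is holomorphic. Since $u_j$ is leafwise constant, the Taylor coefficients of $h_j$ in the transverse variable $u_j$ are again leafwise holomorphic (differentiation in $u_j$ commutes with $\bar\partial$ in the leaf directions); in particular the coefficient of order $n$ restricts to a genuinely holomorphic function $c_j$ on the leaf piece $V_j$. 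I would then set
\[
\widetilde{f}^{(n+1)}_j:=\widetilde{f}^{(n)}_j-(\varpi_j^*c_j)\cdot w_j^n,
\]
which is holomorphic on $W_j$ and, because $n\geq 1$, still satisfies $\widetilde{f}^{(n+1)}_j|_{V_j}=f_j$. This is exactly the jet-correction procedure of the Appendix in \cite{B} recalled in \S\ref{section:Barrett-Fornaess}.

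It then remains to verify that the correction kills the order-$n$ part of the error on $\mathcal{U}_j$. Here I would invoke conditions (iii) and (iv) of \S\ref{section:Barrett-Fornaess}, which together give $w_j|_{\mathcal{U}_j}=u_j+o(u_j^2)$ and hence $(w_j|_{\mathcal{U}_j})^n=u_j^n+o(u_j^{n+1})$; combined with $(\varpi_j^*c_j)|_{\mathcal{U}_j}=c_j+O(u_j)$, the subtracted term restricts to $c_j\,u_j^n+O(u_j^{n+1})$ on $\mathcal{U}_j$, matching the leading term of $h_j$. Thus $(\widetilde{f}^{(n+1)}_j-p^*f)|_{\mathcal{U}_j}=O(u_j^{n+1})$, and the induction closes. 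The one genuinely delicate point — the step I expect to be the main obstacle — is the holomorphicity on $V_j$ of the successive coefficients $c_j$, which is what allows each correction to be extended holomorphically over $W_j$ and subtracted; this is precisely where the hypotheses that $p$ be leafwise holomorphic and that $u_j$ be leafwise constant are indispensable, since they are what promote the merely smooth $u_j$-Taylor coefficients of the leafwise-holomorphic error to honest holomorphic functions. Note also that only $C^2$-flatness (i.e. $r=2$) is used, yet the estimate $w_j^n=u_j^n+o(u_j^{n+1})$ is strong enough to bootstrap the approximation of $p^*f$ to every order $n$.
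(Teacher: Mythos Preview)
Your proof is correct and follows essentially the same inductive jet-correction scheme as the paper's proof: expand the leafwise-holomorphic error in $u_j$, observe that the leading coefficient is holomorphic on $V_j$, extend it holomorphically to $W_j$, and subtract off the corresponding multiple of $w_j^n$, using $w_j|_{\mathcal{U}_j}=u_j+O(u_j^2)$ from the $C^2$-flat data of \S\ref{section:Barrett-Fornaess}. The only cosmetic difference is that you produce the holomorphic extension via the explicit coordinate projection $\varpi_j$, whereas the paper invokes Stein-ness of $W_j$ to obtain an extension $\widetilde{\theta}^{(n)}_j$; both achieve the same end.
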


\begin{proof}
The proof is by induction on $n$. 
We may assume that $W_j$ is Stein by shrinking if necessary. 
The statement is clear when $n=1$. 
Assume that the lemma is true for $n$. 
Let 
\[
\widetilde{f}^{(n)}_j|_{\mathcal{U}_j}-(p^*f)|_{\mathcal{U}_j}=\theta^{(n)}_j\cdot u_j^n+O(u_j^{n+1})
\]
be the expansion in $u_j$. 
As $u_j$ is constant along the leaves, 
it follows that $\overline{\partial}u_j\equiv 0$ holds on each leaves. 
Since $\widetilde{f}^{(n)}_j|_{\mathcal{U}_j}$ and $(p^*f)|_{\mathcal{U}_j}$ 
are leafwise holomorphic, 
$\theta^{(n)}_j$ is holomorphic on $V_j$. 
As taking holomorphic functions $\widetilde{\theta}^{(n)}_j \colon W_j \to \mathbb{C}$ 
such that $\widetilde{\theta}^{(n)}_j|_{V_j}={\theta}^{(n)}_j$, 
we define holomorphic functions $\widetilde{f}^{(n+1)}_j\colon W_j \to \mathbb{C}$ by 
\[
\widetilde{f}^{(n+1)}_j:=\widetilde{f}^{(n)}_j-\widetilde{\theta}^{(n)}_j\cdot w_j^n. 
\]
It follows by the inductive assumption and the choice of the systems $\{(W_{j},w_j)\}, \{(\mathcal{U}_j,u_j)\}$ that 
$\widetilde{f}^{(n+1)}_j|_{\mathcal{U}_j}-(p^*f)|_{\mathcal{U}_j}=O(u_j^{n+1})$ holds. 
In fact, we have$\vspace{2mm}$ 
\\
\centerline{
$
\begin{array}{lll}
\widetilde{f}^{(n+1)}_j|_{\mathcal{U}_j}-(p^*f)|_{\mathcal{U}_j}&=&
\widetilde{f}^{(n)}_j|_{\mathcal{U}_j}-(p^*f)|_{\mathcal{U}_j}-\widetilde{\theta}^{(n)}_j|_{\mathcal{U}_j} \cdot w_j^n|_{\mathcal{U}_j}
\vspace{2mm}
\\
&=&\theta^{(n)}_j(z_{j})\cdot u_j^n+O(u_j^{n+1})-(\theta^{(n)}_j(z_{j})+O(u_{j}))\cdot (u_{j}+O(u_{j}^2))^n
\vspace{2mm}
\\
&=&O(u_{j}^{n+1}).
\vspace{2mm}
\end{array}
$}
\\
Note that, we need at least $C^2$-flatness for the last estimate.
\end{proof}

\subsection{Proof of Theorem \ref{thm:ueda_nbhd}}\label{pfthm:ueda_nbhd} 
Take open coverings $\{U_{j}\}$ of $C$, $\{V_{j}\}$ of $V$ and $\{W_{j}\}$ of $W$ as beginning at \S3. 
Denote the transition functions of the normal bundles $N_{S/X}$ and $N_{C/S}$ by $t_{jk}$ and $s_{jk}$.
In this section, let us prove Theorem \ref{thm:ueda_nbhd} in a slightly general setting:
not only for the case ${\rm div}(f)=C$, 
but also the case $\{f=0\}=C$. 
Set $\lambda:={\rm mult}_Cf$ and $y_j:=(f^{1/\lambda})|_{V_j}$. 
Thus $\{y_{j}\}$ satisfies the setting as beginning at \S3, that is, $s_{jk}y_{k}=y_{j}$. 
In particular, $N_{C/S}$ is torsion.
In what follows, we prove this theorem under the assumption 
``(iii)' ${\rm type}(C, S, X)=\infty$ and $\{y_j\}$ is of extension type infinity" (Note that (iii)' is a milder condition than (iii)). 
First we prepare the following lemma.
The authors recomend the reader to skip the proof of Lemma 3.12 at first, 
since the proof is almost the same as (and much simpler than) that of Lemma \ref{lem:Ueda_codim2_key}.
\begin{lemma}
There exists an extension of $\{y_j\}$ of type $(2, 0)$. 
\end{lemma}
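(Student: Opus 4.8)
The plan is to build the desired extension by starting from an arbitrary extension of $\{y_j\}$ and cancelling the coefficient of $w_j$ in $s_{jk}z_k-z_j$ one $z_j$-order at a time, the vanishing of each obstruction being supplied by the hypothesis that $\{y_j\}$ is of extension type infinity (Definition \ref{def:infinexttype}). First I would fix any holomorphic extension $\{z_j\}$ of $\{y_j\}$ to $\{W_j\}$, say the one furnished by the coordinate system of \S\ref{section:codim2ueda_review}; it is of type $(1,m_0)$ for some $m_0\ge 0$ (if its $w_j$-coefficient already vanishes identically there is nothing to prove), so that
\[
s_{jk}z_k-z_j=p_{jk}^{(1)}(x_j,z_j)\cdot w_j+O(w_j^2),\qquad p_{jk}^{(1)}(x_j,z_j)=q_{jk}^{(1,m_0)}(x_j)\cdot z_j^{m_0}+O(z_j^{m_0+1}).
\]

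The main step is an inductive refinement turning a type $(1,m)$ extension into a type $(1,m+1)$ one. Given a type $(1,m)$ extension, the leading cochain $\{q_{jk}^{(1,m)}\}$ represents the class $v_{1,m}(C,S,X;\{z_j\})\in H^1(C,N_{S/X}|_C^{-1}\otimes N_{C/S}^{-m+1})$, which vanishes by extension type infinity. Hence $\{q_{jk}^{(1,m)}\}$ is a coboundary, i.e. there are holomorphic functions $\{c_j\}$ on $\{U_j\}$ with $q_{jk}^{(1,m)}=t_{jk}^{-1}s_{jk}^{-m+1}c_k-c_j$ on each $U_{jk}$. Extending $c_j$ to $W_j$ by pullback along the coordinate projection and putting $z_j':=z_j-\tilde c_j\cdot z_j^m w_j$, a direct computation of $s_{jk}z_k'-z_j'$ — using that $t_{jk},s_{jk}$ are constants and that $s_{jk}z_k=z_j+O(w_j)$, $t_{jk}w_k=w_j+O(w_j^2)$ — shows that the $z_j^m w_j$-term is cancelled, so $\{z_j'\}$ is of type $(1,m+1)$. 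Applying this from $m=m_0$ upward produces, for every $m$, an extension whose $w_j$-coefficient vanishes to order $m$ in $z_j$, the correction introduced at stage $m$ being of order $z_j^m w_j$.

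It remains to pass to the limit. Since the correction at stage $m$ has order $z_j^m w_j$, the later corrections leave the already-cancelled lower-order terms untouched, so the accumulated system is well defined at least as a formal series in $w_j$ whose coefficient of $w_j$ vanishes identically; the resulting $\{z_j\}$ then satisfies $s_{jk}z_k-z_j=O(w_j^2)$, i.e. it is of type $(2,0)$. The one delicate point — and the main obstacle — is to guarantee that this infinite succession of corrections assembles into a genuine holomorphic extension on a neighborhood of $C$ rather than a merely formal one. This is exactly where the compactness of $C$ enters: it yields uniform control on the bounding cochains $\{c_j\}$ (the relevant line bundles $N_{S/X}|_C^{-1}\otimes N_{C/S}^{-m+1}$ being flat and torsion), so that the series of corrections converges where $|z_j|$ and $|w_j|$ are small. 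This is the same convergence issue appearing in Lemma \ref{lem:Ueda_codim2_key}, but in a considerably milder form, since here only the single $w_j$-coefficient, and not the full bivariate jet, must be controlled.
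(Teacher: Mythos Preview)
Your inductive scheme is correct and is essentially the same as the paper's: the paper packages the successive corrections $z_j\mapsto z_j-c_j z_j^m w_j$ into a single Schr\"oder-type functional equation $z_j=\zeta_j+\sum_{\mu\ge 0}Q_j^{(1,\mu)}(x_j)\,w_j\zeta_j^\mu$, choosing the $Q_j^{(1,\mu)}$ exactly as you choose the $c_j$'s (as primitives of the extension cocycle $v_{1,\mu}$), and your observation that the correction at stage $m$ is of order $z_j^m w_j$ modulo $O(w_j^2)$ is what makes the two formulations equivalent.

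The one genuine gap is the convergence step. Saying that compactness of $C$ and the torsion property of $N_{S/X}|_C^{-1}\otimes N_{C/S}^{-m+1}$ give ``uniform control on the bounding cochains'' is not enough: the uniform Kodaira--Spencer constant $K$ bounds $\|c_j\|$ only in terms of the \emph{current} cocycle $\|q_{jk}^{(1,m)}\|$, and the latter is not the original coefficient but has been modified by all previous corrections (through the expansion of $c_k(x_k(x_j,z_j,w_j))$ in $z_j$). These feedback terms --- the paper's $h_{1,jk,1m}$ --- must themselves be estimated, and this is the content of the majorant-series argument you allude to. The paper carries it out by exhibiting an explicit dominant series $\zeta_j+\dfrac{2KRM}{1-(2K+1)R\zeta_j}\,w_j$ (with $R$ coming from Cauchy estimates on a fixed polydisc and $M$ from the size of the original transitions) and checking that its coefficients majorize $|Q_j^{(1,\mu)}|$; without some such quantitative bound the passage from ``formal solution'' to ``holomorphic extension'' is not justified.
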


\begin{proof}
Take an extension $\{z_j\}$ of type $(1, 0)$ and denote by 
\[
      s_{jk}z_k = z_j+\sum_{\nu=1}^\infty\sum_{\mu=0}^\infty q_{jk}^{(\nu, \mu)}(x_j)\cdot w_j^{\nu}z_j^\mu 
\]
the expansion of $s_{jk}z_k$ in the variables $z_j$ and $w_j$. 
We will construct an extension $\{\zeta_j\}$ of $\{y_j\}$ of type $(2, 0)$ as the solution of a suitably determined Schr\"oder type functional equation 
\begin{equation}\label{eq:func_eq_for_q_1_m}
      z_j  =\zeta_j+\sum_{\mu=0}^\infty Q_j^{(1, \mu)}(x_j)\cdot w_j\zeta_j^\mu. 
\end{equation}
Let us explain how to choose the coefficient functions $\{Q_j^{(1, \mu)}\}$. 
First, take $\{Q_j^{(1, 0)}\}$ so that 
$Q_j^{(1, 0)}-t_{jk}^{-1}s_{jk}Q_k^{(1, 0)}=q_{jk}^{(1, 0)}$ holds for each $j$ and $k$ 
(Recall the condition that $v_{1, 0}\in H^1(C, N_{S/X}|_C^{-1}\otimes N_{C/S})$ vanishes). 
Next, 
we explain how to take $\{Q_j^{(1, m)}\}$ by assuming there exists $\{Q_j^{(1, \mu)}\}$ for each $\mu<m$ which satisfies the inductive assumption {\bf (condition)}$_{m}$: for any choice of the remaining coefficient functions $\{Q_j^{(1, \mu)}\}_{\mu\geq m}$, the solution $\{\zeta_j\}$ of the above Schr\"oder type functional equation is an extension of $\{y_j\}$ of type $(1, m)$ if exists. 
Consider the solution $\{\xi_j\}$ of the functional equation 
\[
      z_j  =\xi_j+\sum_{\mu=0}^{m-1} Q_j^{(1, \mu)}(x_j)\cdot w_j\xi_j^\mu, 
\]
which is an extension of type $(1, m)$ by the inductive assumption {\bf (condition)}$_{m}$. 
By considering the two-fold manner of expanding $s_{jk}\xi_k$ in the variables $\xi_j$ and $w_j$ (for the details, see the arguments below on comparing the equations (\ref{eq:funceq_1}) and (\ref{eq:funceq_2}), or in the proof of Lemma \ref{lem:Ueda_codim2_key}), we obtain the equality 
\begin{eqnarray}
s_{jk}\xi_k-\xi_j&=&-\sum_{\mu=0}^{m-1} 
\Big{(} t_{jk}^{-1}s_{jk}^{-\mu+1}Q_k^{(1, \mu)}(x_k(x_j))+h_{1, jk, 1\mu}(x_j)\Big{)}\cdot w_j\xi_j^\mu
-h_{1, jk, 1m}(x_j)\cdot w_j\xi_j^m\nonumber \\
&&+\sum_{\mu=0}^{m-1} \Big{(} Q_j^{(1, \mu)}(x_j)+q_{jk}^{(1, \mu)}(x_j)\Big{)}\cdot w_j\xi_j^\mu
+q_{jk}^{(1, m)}(x_j)\cdot w_j \xi_j^m+O(\xi_j^{m+1})\cdot w_j +O(w_j^2), \nonumber 
\end{eqnarray}
where $h_{1, jk, 1\ell}:=\textstyle\sum_{\mu=0}^{\ell-1}t_{jk}^{-1}s_{jk}^{-\mu+1}Q_{kj, 0(\ell-\mu)}^{(1, \mu)}(x_j)$ is the function determined by the coefficients $Q_{kj, 0q}^{(1, \mu)}$ of the expansion 
\[
Q_k^{(1, \mu)}(x_k)=Q_k^{(1, \mu)}(x_k(x_j, z_j, w_j))=Q_k^{(1, \mu)}(x_k(x_j, 0, 0))+\sum_{q=1}^\infty Q_{kj, 0q}^{(1, \mu)}(x_j)\cdot z_j^q+O(w_j). 
\]
Note that it follows from {\bf (condition)}$_{m}$ that 
\[
t_{jk}^{-1}s_{jk}^{-\mu+1}Q_k^{(1, \mu)}(x_k(x_j))-Q_j^{(1, \mu)}(x_j)=-h_{1, jk, 1\mu}(x_j)+q_{jk}^{(1, \mu)}(x_j)
\]
holds on $U_{jk}$ for each $\mu<m$. Take $\{Q_j^{(1, m)}\}$ so that 
\[
t_{jk}^{-1}s_{jk}^{-m+1}Q_k^{(1, m)}(x_k(x_j))-Q_j^{(1, m)}(x_j)=-h_{1, jk, 1m}(x_j)+q_{jk}^{(1, m)}(x_j)
\]
holds on $U_{jk}$ (here we used the condition that $v_{1, m}\in H^1(C, N_{S/X}|_C^{-1}\otimes N_{C/S}^{-m+1})$ vanishes). 
Then it is easily checked that {\bf (condition)}$_{m+1}$ holds. 

Finally, by {\bf (condition)}$_{m}$'s and the inverse function theorem, it is sufficient to show that the right hand side of the functional equation (\ref{eq:func_eq_for_q_1_m}) has a positive radius of convergence as a (formal) power series. 
By the same (or much easier) argument as in the proof of Lemma \ref{lem:Ueda_codim2_key} below, we obtain that 
\[
\zeta_j+\frac{2KRM}{1-(2K+1)R\cdot \zeta_j}\cdot w_j
=\zeta_j+2KRM\cdot w_j+2KR^2M(2K+1)\cdot w_j\zeta_j+2KR^3M(2K+1)^2\cdot w_j\zeta_j^2+\cdots
\]
is a dominant convergence sequence of the right hand side of the functional equation (\ref{eq:func_eq_for_q_1_m}) for sufficiently large positive numbers $K, R$, and $M$, which proves the lemma (see also the proof of \cite[Lemma 5 (i')]{K}). 
\end{proof}

Take an extension $\{z_j\}$ of $\{y_j\}$ of type $(2, 0)$ and a system $\{w_j\}$ of type $(1, 0)$. 
Let 
\begin{equation}\label{eq:eq_1}
\left(
    \begin{array}{c}
      t_{jk}w_k  \\
      s_{jk}z_k 
    \end{array}
  \right)
=
\left(
    \begin{array}{c}
       w_j\\
       z_j
    \end{array}
  \right)
+\sum_{\nu=2}^\infty\sum_{\mu=0}^\infty 
\left(
    \begin{array}{c}
       g_{jk}^{(\nu, \mu)}(x_j) \\
       q_{jk}^{(\nu, \mu)}(x_j)
    \end{array}
  \right)
\cdot w_j^{\nu}z_j^\mu 
\end{equation}
be the expansions on $W_{jk}$. 
We will refine the system $\{w_{j}\}$ and 
the extension $\{z_{j}\}$ of $\{y_{j}\}$ on $W_{j}$ \textit{keeping fixed $\{x_{j}\}$ and $\{y_{j}\}$}. 

The goal of this proof is 
to construct a new system $\{v_j\}$ and a new extension $\{\zeta_j\}$ of $\{y_j\}$ 
satisfying 
\begin{equation}\label{eq:eq_2}
\left(
    \begin{array}{c}
      t_{jk}v_k  \\
      s_{jk}\zeta_k 
    \end{array}
  \right)
=
\left(
    \begin{array}{c}
       v_j\\
       \zeta_j
    \end{array}
  \right)
\end{equation}
on each $W_{jk}$ 
after shrinking $W$ if necessary. 
Then $\{v_j\}$ gives a $U(1)$-flat structure on $\mathcal{O}_X(S)$ 
and $\{\zeta_j\}$ defines a complex hypersurface $Y$ of $W$ 
which intersects $S$ transversally along $C$. 
Let us construct such defining functions $\{v_j\}$ and $\{\zeta_j\}$ 
by solving a Schr\"oder type functional equation 
defined by (\ref{eq:eq_1}) and (\ref{eq:eq_2}) 
as in \cite[\S 4.2]{U} and \cite[\S 4.1]{K}. However, in our situation, 
some difficulties arise from non-compactness of $V$ and $Y$.  
In order to avoid the difficulties, we solve two functional equations together, 
whose solutions have a compact common zero $C=V\cap Y$. 

Let us consider the Schr\"oder type functional equation 
\begin{equation}\label{eq:main_funceq}
\left(
    \begin{array}{c}
      w_j  \\
      z_j  
    \end{array}
  \right)
=
\left(
    \begin{array}{c}
       v_j\\
       \zeta_j
    \end{array}
  \right)
+\sum_{\nu=2}^\infty\sum_{\mu=0}^\infty 
\left(
    \begin{array}{c}
       G_{j}^{(\nu, \mu)}(x_j) \\
       Q_{j}^{(\nu, \mu)}(x_j)
    \end{array}
  \right)
\cdot  v_j^{\nu}\zeta_j^\mu
\end{equation}
on each $W_j$, where the coefficient functions $G_{j}^{(\nu, \mu)}$ and $Q_{j}^{(\nu, \mu)}$ are holomorphic functions depending only on $x_j$. 
We will construct the coefficients $\{(G_{j}^{(\nu, \mu)}, Q_{j}^{(\nu, \mu)})\}$  
so that the solutions $\{( v_j, \zeta_j)\}$ exist and satisfy the equation (\ref{eq:eq_2}). 

In order to explain how to construct the coefficients, let us first observe the properties of them by assuming that $\{v_j\}$ and $\{\zeta_j\}$ are solutions of the equation (\ref{eq:eq_2}) on each $W_{jk}$. 
Consider the following two manners of the expansion of $(t_{jk}w_k, s_{jk}z_k)|_{W_{jk}}$ in $v_j$ and $\zeta_j$. 
The first one is obtained as follows: 
\begin{eqnarray}\label{eq:funceq_1}
\left(
    \begin{array}{c}
      t_{jk}w_k  \\
      s_{jk}z_k  
    \end{array}
  \right)
&=& 
\left(
    \begin{array}{c}
       t_{jk}v_k\\
       s_{jk}\zeta_k
    \end{array}
  \right)
+\sum_{(\nu, \mu)\geq(2, 0)}
\left(
    \begin{array}{c}
       t_{jk}G_{k}^{(\nu, \mu)}(x_k) \\
       s_{jk}Q_{k}^{(\nu, \mu)}(x_k)
    \end{array}
  \right)
\cdot  v_k^{\nu}\zeta_k^\mu
\\
&=& 
\left(
    \begin{array}{c}
       v_j\\
       \zeta_j
    \end{array}
  \right)
+\sum_{(\nu, \mu)\geq(2, 0)}
\left(
    \begin{array}{c}
       t_{jk}^{-\nu+1}s_{jk}^{-\mu}G_{k}^{(\nu, \mu)}(x_k) \\
       t_{jk}^{-\nu}s_{jk}^{-\mu+1}Q_{k}^{(\nu, \mu)}(x_k)
    \end{array}
  \right)
\cdot  v_j^{\nu}\zeta_j^\mu
\nonumber\\
&=& 
\left(
    \begin{array}{c}
       v_j\\
       \zeta_j
    \end{array}
  \right)
+\sum_{(\nu, \mu)\geq(2, 0)}
\left(
    \begin{array}{c}
       t_{jk}^{-\nu+1}s_{jk}^{-\mu}G_{k}^{(\nu, \mu)}(x_k(x_j))+h^{(1)}_{1,jk,\nu\mu}(x_j) \\
       t_{jk}^{-\nu}s_{jk}^{-\mu+1}Q_{k}^{(\nu, \mu)}(x_k(x_j))+h^{(2)}_{1,jk,\nu\mu}(x_j)
    \end{array}
  \right)
\cdot  v_j^{\nu}\zeta_j^\mu. 
\nonumber
\end{eqnarray}
Here the expansions of $(G_{k}^{(\nu, \mu)},Q_{k}^{(\nu, \mu)})$ in $z_j$ and $w_j$ are denoted by 
\[
\left(
    \begin{array}{c}
       G_{k}^{(\nu, \mu)}(x_k(x_j, z_j, w_j)) \\
       Q_{k}^{(\nu, \mu)}(x_k(x_j, z_j, w_j))
    \end{array}
  \right)
=
\left(
    \begin{array}{c}
       G_{k}^{(\nu, \mu)}(x_k(x_j)) \\
       Q_{k}^{(\nu, \mu)}(x_k(x_j))
    \end{array}
  \right)
+\sum_{(p, q)\geq(0,1)}
\left(
    \begin{array}{c}
       G_{kj, pq}^{(\nu, \mu)}(x_j) \\
       Q_{kj, pq}^{(\nu, \mu)}(x_j)
    \end{array}
  \right)
\cdot w_j^pz_j^q
\]
and 
$h^{(i)}_{1,jk,\nu\mu}(x_j)$'s are the coefficients of $v_j^\nu\zeta_j^\mu$ in the expansion of
\[
\sum_{(c, d)\geq(2, 0)}\sum_{(p, q)\geq(0,1)}\left(
    \begin{array}{c}
       t_{jk}^{-\nu+1}s_{jk}^{-\mu}G_{kj, pq}^{(\nu, \mu)}(x_j) \\
       t_{jk}^{-\nu}s_{jk}^{-\mu+1}Q_{kj, pq}^{(\nu, \mu)}(x_j)
    \end{array}
  \right)
\cdot\left(v_j+\sum G_j^{(a, b)}v_j^a\zeta_j^b\right)^p
\cdot\left(\zeta_j+\sum Q_j^{(a, b)}v_j^a\zeta_j^b\right)^q
\cdot  v_j^{c}\zeta_j^{d}. 
\]

The second one is as follows: 
\begin{eqnarray}\label{eq:funceq_2}
\left(
    \begin{array}{c}
      t_{jk}w_k  \\
      s_{jk}z_k 
    \end{array}
  \right)
&=&
\left(
    \begin{array}{c}
       w_j\\
       z_j
    \end{array}
  \right)
+\sum_{(\nu, \mu)\geq(2,0)}
\left(
    \begin{array}{c}
       g_{jk}^{(\nu, \mu)}(x_j) \\
       q_{jk}^{(\nu, \mu)}(x_j)
    \end{array}
  \right)
\cdot w_j^{\nu}z_j^\mu 
\\
&=&
\left(
    \begin{array}{c}
       v_j\\
       \zeta_j
    \end{array}
  \right)
+\sum_{(\nu, \mu)\geq(2,0)}
\left(
    \begin{array}{c}
       G_{j}^{(\nu, \mu)}(x_j)+h^{(1)}_{2,jk,\nu\mu} \\
       Q_{j}^{(\nu, \mu)}(x_j)+h^{(2)}_{2,jk,\nu\mu} 
    \end{array}
  \right)
\cdot  v_j^{\nu}\zeta_j^\mu.
\nonumber
\end{eqnarray}
Here $h^{(i)}_{2,jk,\nu\mu}(x_j)$'s are the coefficients of $v_j^\nu\zeta_j^\mu$ 
in the expansion of 
\[
\sum_{(\nu, \mu)\geq(2,0)}\left(
    \begin{array}{c}
       g_{jk}^{(\nu, \mu)}(x_j) \\
       q_{jk}^{(\nu, \mu)}(x_j)
    \end{array}
  \right)
\cdot\left(v_j+\sum G_j^{(a, b)}v_j^a\zeta_j^b\right)^\nu\cdot\left(\zeta_j+\sum Q_j^{(a, b)}v_j^a\zeta_j^b\right)^\mu. 
\]

By comparing two expansions (\ref{eq:funceq_1}) and (\ref{eq:funceq_2}), 
it is observed that one should take $\{G_j^{(\nu, \mu)}\}$ as a solution of the $\delta$-equation 
\begin{equation}\label{eq:delta-eq_1}
\delta\{(U_j, G_j^{(\nu, \mu)})\}=\{(U_{jk}, h^{(1)}_{1,jk,\nu\mu}-h^{(1)}_{2,jk,\nu\mu})\}
\in \check{Z}^1(\{U_j\}, N_{S/X}|_C^{-\nu+1}\otimes N_{C/S}^{-\mu})
\end{equation}
and $\{Q_j^{(\nu, \mu)}\}$ as a solution of 
\begin{equation}\label{eq:delta-eq_2}
\delta\{(U_j, Q_j^{(\nu, \mu)})\}=\{(U_{jk}, h^{(2)}_{1,jk,\nu\mu}-h^{(2)}_{2,jk,\nu\mu})\}
\in \check{Z}^1(\{U_j\}, N_{S/X}|_C^{-\nu}\otimes N_{C/S}^{-\mu+1}). 
\end{equation}
In fact, the following lemma tells that 
$\{G_j^{(\nu, \mu)}\}, \{Q_j^{(\nu, \mu)}\}$ are inductively determined by 
(\ref{eq:delta-eq_1}),(\ref{eq:delta-eq_2}) 
with some estimates to assure the existence of solutions 
of (\ref{eq:main_funceq}). 
Further, it is not difficult to check that, for $(n,m)\geq(2,0)$,  
$h^{(i)}_{1,jk,nm}(x_j), h^{(i)}_{2,jk,nm}(x_j)$ depend on 
the choice of $\{G_j^{(\nu, \mu)}\}, \{Q_j^{(\nu, \mu)}\}$ only for 
$\nu<n$ and $\mu\leq m$, or $\nu\leq n$ and $\mu<m$. 

\begin{lemma}\label{lem:Ueda_codim2_key}
There exists a power series $A(X, Y)=\sum_{\nu=2}^\infty\sum_{\mu=0}^\infty A_{\nu, \mu}X^\nu Y^\mu$ with positive radius of convergence which satisfy the following property 
for each $(n, m)\geq (2, 0)$: \\
if $\{G_j^{(\nu, \mu)}\}$ and $\{Q_j^{(\nu, \mu)}\}$ satisfy 
{\bf (condition)}$_{(\nu, \mu)}$ for each $(2,0)\leq (\nu, \mu)<(n, m)$, 
then there exist $\{G_j^{(n, m)}\}$ and $\{Q_j^{(n, m)}\}$ with {\bf (condition)}$_{(n, m)}$.
\begin{description}
\item[(condition)$_{(\nu, \mu)}$] 
$\{G_j^{(\nu, \mu)}\}$ and $\{Q_j^{(\nu, \mu)}\}$ are solutions of the $\delta$-equations (\ref{eq:delta-eq_1}) and (\ref{eq:delta-eq_2}) with $\max_j\sup_{U_j}|G_j^{(\nu, \mu)}|\leq A_{\nu, \mu}$ and $\max_j\sup_{U_j}|Q_j^{(\nu, \mu)}|\leq A_{\nu, \mu}$ respectively.
\end{description} 
\end{lemma}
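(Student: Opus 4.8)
The plan is to run the classical method of majorants, as in Ueda \cite[\S 4.2]{U} and \cite[Lemma 5]{K}, but carried out simultaneously for the two coordinates $v_j$ and $\zeta_j$. The construction of the coefficients $\{(G_j^{(\nu,\mu)}, Q_j^{(\nu,\mu)})\}$ proceeds by induction on the pairs $(\nu,\mu)$ ordered lexicographically as in the definition of type $(n,m)$, and at each stage I must verify two things: that the $\delta$-equations (\ref{eq:delta-eq_1}) and (\ref{eq:delta-eq_2}) are \emph{solvable}, and that a solution can be chosen whose sup-norm obeys the prescribed bound $A_{n,m}$. The series $A(X,Y)$ is then produced as the power-series solution of a single closed-form implicit equation, so that positivity of its radius of convergence is automatic.

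First I would settle solvability. At the stage $(\nu,\mu)$ the partially normalized system $\{w_j\}$ is of type $(\nu-1,\mu)$ and the partially normalized extension $\{\zeta_j\}$ is of type $(\nu,\mu)$, so the right-hand cocycle of (\ref{eq:delta-eq_1}) represents the Ueda class $u_{\nu-1,\mu}(C,S,X)$ in $H^1(C, N_{S/X}|_C^{-\nu+1}\otimes N_{C/S}^{-\mu})$, while the right-hand cocycle of (\ref{eq:delta-eq_2}) represents the extension class $v_{\nu,\mu}(C,S,X)$ in $H^1(C, N_{S/X}|_C^{-\nu}\otimes N_{C/S}^{-\mu+1})$. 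Both vanish: the former because $(C,S,X)$ is of type $\infty$ and the latter because $\{y_j\}$ is of extension type infinity, using the well-definedness results of \S\ref{section:well-def} to guarantee that these classes are the genuine obstructions for \emph{any} system. Hence both $\delta$-equations are always solvable.

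Second, and this is the structural point that makes a single $A$ work for all indices, I would fix once and for all a uniform bounded solution operator for $\delta$. Since $C$ is compact and both $N_{S/X}|_C$ and $N_{C/S}$ are $U(1)$-flat and torsion, the bundles $N_{S/X}|_C^{-\nu+1}\otimes N_{C/S}^{-\mu}$ and their shifts fall into only finitely many isomorphism classes and are carried by transition functions of modulus one; for the fixed finite covering $\{U_j\}$ the map $\delta\colon \check{C}^0\to\check{Z}^1$ then has closed image (as $H^1$ is finite-dimensional and Hausdorff) and admits a bounded right inverse on that image, whose operator norm can be taken to be a single constant $K$ independent of $(\nu,\mu)$. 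Thus whenever the obstruction vanishes I may choose $\{G_j^{(\nu,\mu)}\}$ and $\{Q_j^{(\nu,\mu)}\}$ with sup-norm at most $K$ times the sup-norm of the corresponding right-hand cocycle. Applying Cauchy's estimates to the convergent expansions (\ref{eq:eq_1}) and to the nonlinear coordinate change $x_k(x_j,z_j,w_j)$ gives geometric bounds $M R^{\nu+\mu}$ on the data $g_{jk}^{(\nu,\mu)}, q_{jk}^{(\nu,\mu)}$ and on the Taylor coefficients $G_{kj,pq}^{(\nu,\mu)}, Q_{kj,pq}^{(\nu,\mu)}$. Because $h^{(i)}_{1,jk,nm}$ and $h^{(i)}_{2,jk,nm}$ are universal polynomial expressions in these data and in the already-chosen coefficients of index $(\nu,\mu)<(n,m)$, multiplying their sup-norms by $K$ yields a convolution-type recursive inequality for $A_{n,m}$. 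I would then take $A(X,Y)$ to be the analytic solution, vanishing to order two at the origin, of the implicit equation modelled on the rational dominant $\zeta+\tfrac{2KRM}{1-(2K+1)R\cdot\zeta}\,w$ displayed before the statement, and confirm by the analytic implicit function theorem that it has positive radius of convergence; a term-by-term comparison then gives \textbf{(condition)}$_{(n,m)}$.

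The main obstacle is this last bookkeeping step: verifying that the polynomial recursion for the sup-norms of $h^{(i)}_{1,jk,nm}-h^{(i)}_{2,jk,nm}$ — which mixes the two coordinates both through the substitution of the functional equation (\ref{eq:main_funceq}) into itself and through the nonlinear change $x_k(x_j,z_j,w_j)$ — is genuinely dominated coefficient-by-coefficient by the chosen closed-form majorant $A(X,Y)$. The uniformity of $K$ in $(\nu,\mu)$ is exactly what permits one fixed $A$ to control every stage simultaneously, and this uniformity is precisely where the $U(1)$-flat and torsion hypotheses on the two normal bundles enter.
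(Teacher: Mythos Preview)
Your proposal is correct and follows essentially the same route as the paper's proof: solvability of the $\delta$-equations via the infinite-type hypotheses, a uniform constant $K$ coming from the finitely many isomorphism classes of the (torsion, $U(1)$-flat) line bundles $N_{S/X}|_C^{-k}\otimes N_{C/S}^{-\ell}$ (the paper cites this as \cite[Lemma 3]{U}, the Kodaira--Spencer lemma, which is exactly your bounded-right-inverse statement), Cauchy estimates for the data, and a closed-form majorant solved by the implicit function theorem. The only point worth flagging is that the majorant you gesture toward --- the one-variable dominant $\zeta+\frac{2KRM}{1-(2K+1)R\zeta}\,w$ from the preceding lemma --- is not sharp enough here; the paper instead writes down an explicit \emph{cubic} equation in $A(X,Y)$ (equation (\ref{eq:funceq_A})) whose two rational terms separately dominate the contributions from $h^{(i)}_{1,jk,nm}$ and $h^{(i)}_{2,jk,nm}$, and then checks termwise domination directly. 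Your final paragraph already identifies this bookkeeping as the crux, so there is no conceptual gap, only the need to write out the correct two-variable functional equation.
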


\begin{proof}
Let $K_{k, \ell}:=K(N_{S/X}|_C^{-k}\otimes N_{C/S}^{-\ell})$ be the constant 
as in \cite[Lemma 3]{U} (Kodaira-Spencer's lemma \cite{KS}) and 
set $K:=\max_{k, \ell} K_{k, \ell}$ (here we used the fact that $N_{S/X}|_C$ and $N_{C/S}$ are torsion). 
Take sufficiently large $M,R>0$ 
such that $\textstyle\sup_{W_{jk}}|t_{jk}w_k-w_j|<M$,  $\textstyle\sup_{W_{jk}}|s_{jk}z_k-z_j|<M$ and 
$\Delta_{R}=\{(x_j, z_j, w_j)\mid x_j\in U_{jk}, |z_j|<R^{-1}, |w_j|<R^{-1}\}\subset W_{jk}$ hold for each $j, k$ 
(Strictly speaking, we can not take such $R$ in general. 
In the general case, we have to consider a new open covering $\{U_j^*\}$ of $C$ such that each $U_j^*$ is a relatively compact subset of $U_j$. After replacing $R$ with a sufficiently large number, $\Delta_R$ with $\{(x_j, z_j, w_j)\mid x_j\in U_{j}\cap U_k^*, |z_j|<R^{-1}, |w_j|<R^{-1}\}$, and $M$ with $2M$, and so on, 
this difficulty can be avoided. See \cite[p. 599]{U} and \cite[Remark 2]{K} for the details). 
Consider the solution $A(X, Y)$ of the cubic equation
 \begin{equation}\label{eq:funceq_A}
  A(X, Y)=KR\cdot\left(\frac{A(X, Y)(Y+A(X, Y))}{1-RY-RA(X, Y)}+\frac{A(X, Y)(X+A(X, Y))+MR(X+A(X, Y)^2)}{(1-RX-RA(X, Y))(1-RY-RA(X, Y))}\right).   
 \end{equation}
Though the functional equation $(\ref{eq:funceq_A})$ has three solutions, the solution $A$ is uniquely determined by the condition that $A(X, Y)=O(X^2)$. 
In fact, the coefficients are inductively determined. 

Let us check that this $A(X, Y)$ satisfies the required property. 
Assume the existence of $\{G_j^{(\nu, \mu)}\}, \{Q_j^{(\nu, \mu)}\}$ 
with {\bf(condition)}$_{(\nu, \mu)}$ for each $(2,0)\leq (\nu, \mu)<(n, m)$. 
Then, by the assumption $(iii)$ of types,
the existence of solutions $\{G_j^{(n, m)}\}, \{Q_j^{(n, m)}\}$ 
satisfying {\bf(condition)}$_{(n, m)}$
can be shown as the following steps, even if $(n,m)=(2,0)$.  

\vskip3mm
\noindent{\bf Construction of $\{G_j^{(n, m)}\}$ and $\{Q_j^{(n, m)}\}$. }
\\
Consider the solution of the functional equation 
\[
\left(
    \begin{array}{c}
      w_j  \\
      z_j  
    \end{array}
  \right)
=
\left(
    \begin{array}{c}
       v_j\\
       \zeta_j
    \end{array}
  \right)
+\sum_{(2, 0)\leq (\nu, \mu)<(n, m)}
\left(
    \begin{array}{c}
       G_{j}^{(\nu, \mu)}(x_j) \\
       Q_{j}^{(\nu, \mu)}(x_j)
    \end{array}
  \right)
\cdot  v_j^{\nu}\zeta_j^\mu. 
\]
From the $\delta$-equations (\ref{eq:delta-eq_1}) and (\ref{eq:delta-eq_2}) for $(2,0)\leq (\nu, \mu)<(n, m)$, 
it follows from the inductive argument that the system $\{v_j\}$ is of type $(n-1,m)$ and the extension $\{\zeta_j\}$ is of type $(n,m)$. 
By using this fact, we have the two expansions of $(t_{jk}w_{k},s_{jk}z_{k})$ (see also \cite[Lemma 7]{K}):
\begin{eqnarray}
\left(
    \begin{array}{c}
      t_{jk}w_k  \\
      s_{jk}z_k  
    \end{array}
  \right)
&=& 
\left(
    \begin{array}{c}
       t_{jk}v_k\\
       s_{jk}\zeta_k
    \end{array}
  \right)
+\sum_{(2, 0)\leq (\nu, \mu)<(n, m)}
\left(
    \begin{array}{c}
       t_{jk}G_{k}^{(\nu, \mu)}(x_k) \\
       s_{jk}Q_{k}^{(\nu, \mu)}(x_k)
    \end{array}
  \right)
\cdot  v_k^{\nu}\zeta_k^\mu
\nonumber
\\
&=& 
\left(
    \begin{array}{c}
       t_{jk}v_k\\
       s_{jk}\zeta_k
    \end{array}
  \right)
+\sum_{(2, 0)\leq (\nu, \mu)<(n, m)}
\left(
    \begin{array}{c}
       t_{jk}^{-\nu+1}s_{jk}^{-\mu}G_{k}^{(\nu, \mu)}(x_k) \\
       t_{jk}^{-\nu}s_{jk}^{-\mu+1}Q_{k}^{(\nu, \mu)}(x_k)
    \end{array}
  \right)
\cdot  v_j^{\nu}\zeta_j^\mu+O(v_j^{n+1})
\nonumber\\
&=& 
\left(
    \begin{array}{c}
       t_{jk}v_k\\
       s_{jk}\zeta_k
    \end{array}
  \right)
+\sum_{(2, 0)\leq (\nu, \mu)<(n, m)}
\left(
    \begin{array}{c}
       t_{jk}^{-\nu+1}s_{jk}^{-\mu}G_{k}^{(\nu, \mu)}(x_k(x_j))+h^{(1)}_{1,jk,\nu\mu}(x_j) \\
       t_{jk}^{-\nu}s_{jk}^{-\mu+1}Q_{k}^{(\nu, \mu)}(x_k(x_j))+h^{(2)}_{1,jk,\nu\mu}(x_j)
    \end{array}
  \right)
\cdot  v_j^{\nu}\zeta_j^\mu
\nonumber \\
&&+
\left(
    \begin{array}{c}
       h^{(1)}_{1,jk,nm}(x_j) \\
       h^{(2)}_{1,jk,nm}(x_j)
    \end{array}
  \right)
\cdot  v_j^{n}\zeta_j^m +O(\zeta_j^{m+1})\cdot v_j^{n}+O(v_j^{n+1})
\nonumber
\end{eqnarray}
and 
\begin{eqnarray}
\left(
    \begin{array}{c}
      t_{jk}w_k  \\
      s_{jk}z_k 
    \end{array}
  \right)
&=&
\left(
    \begin{array}{c}
       w_j\\
       z_j
    \end{array}
  \right)
+\sum_{(2, 0)\leq (\nu, \mu)\leq(n, m)}
\left(
    \begin{array}{c}
       g_{jk}^{(\nu, \mu)}(x_j) \\
       q_{jk}^{(\nu, \mu)}(x_j)
    \end{array}
  \right)
\cdot w_j^{\nu}z_j^\mu +O(\zeta_j^{m+1})\cdot v_j^{n}+O(v_j^{n+1})
\nonumber 
\\
&=&
\left(
    \begin{array}{c}
       v_j\\
       \zeta_j
    \end{array}
  \right)
+\sum_{(2, 0)\leq (\nu, \mu)<(n, m)}
\left(
    \begin{array}{c}
       G_{j}^{(\nu, \mu)}(x_j)+h^{(1)}_{2,jk,\nu\mu} \\
       Q_{j}^{(\nu, \mu)}(x_j)+h^{(2)}_{2,jk,\nu\mu} 
    \end{array}
  \right)
\cdot  v_j^{\nu}\zeta_j^\mu\nonumber \\
&&+
\left(
    \begin{array}{c}
       h^{(1)}_{2,jk,nm}(x_j) \\
       h^{(2)}_{2,jk,nm}(x_j)
    \end{array}
  \right)
\cdot  v_j^{n}\zeta_j^m +O(\zeta_j^{m+1})\cdot v_j^{n}+O(v_j^{n+1}).
\nonumber
\end{eqnarray}
As $\{G_j^{(\nu, \mu)}\}$ and $\{Q_j^{(\nu, \mu)}\}$ are solutions of the $\delta$-equations (\ref{eq:delta-eq_1}) and (\ref{eq:delta-eq_2}) for each $(\nu, \mu)<(n, m)$, we have  
\[
\left(
    \begin{array}{c}
      t_{jk}v_k  \\
      s_{jk}\zeta_k  
    \end{array}
  \right)
=
\left(
    \begin{array}{c}
       v_j\\
       \zeta_j
    \end{array}
  \right)-
\left(
    \begin{array}{c}
       h^{(1)}_{1,jk,nm}-h^{(1)}_{2,jk,nm} \\
       h^{(2)}_{1,jk,nm}-h^{(2)}_{2,jk,nm}
    \end{array}
  \right)
\cdot  v_j^{n}\zeta_j^m
+O(\zeta_j^{m+1})\cdot v_j^{n}+O(v_j^{n+1}).  
\] 
From the assumption $(iii)$, 
$u_{n-1,m}(C, S, X;\{v_{j}\})=0$ and $v_{n,m}(C,S,X;\{\zeta_j\})=0$ hold. 
Thus we obtain the solutions $\{G_j^{(n, m)}\}, \{Q_j^{(n, m)}\}$ 
of (\ref{eq:delta-eq_1}), (\ref{eq:delta-eq_2}). 
\vskip3mm
\noindent{\bf Estimate of $\{G_j^{(n, m)}\}$ and $\{Q_j^{(n, m)}\}$. }
\\
Let us estimate for such $\{G_j^{(n, m)}\}$ and $\{Q_j^{(n, m)}\}$. 
From the definition of $h^{(i)}_{1,jk,\nu\mu}(x_j)$ and 
the Cauchy's estimate for $G_{k}^{(\nu, \mu)}$ and $Q_{k}^{(\nu, \mu)}$ 
on $\Delta_{R,x_j}=\{|z_j|<R^{-1}, |w_j|<R^{-1}\}$,
it follows that $|h^{(i)}_{1,jk,nm}(x_j)|$ is bounded by the coefficient of $X^nY^m$ 
in the expansion of
\begin{eqnarray}
&&\sum_{(\nu, \mu)\geq(2,0)}\sum_{(p, q)\geq(0,1)}(A_{\nu, \mu}R^{p+q})
\cdot(X+A(X, Y))^p\cdot (Y+A(X, Y))^q
\cdot  X^{\nu}Y^\mu\nonumber \\
&=&\sum_{(\nu, \mu)\geq(2,0)}\sum_{q=1}^\infty (A_{\nu, \mu}R^{q})\cdot (Y+A(X, Y))^q
\cdot  X^{\nu}Y^\mu \nonumber \\
&+&\sum_{(\nu, \mu)\geq(2,0)}\sum_{(p, q)\geq(1,0)} (A_{\nu, \mu}R^{p+q})
\cdot(X+A(X, Y))^p\cdot (Y+A(X, Y))^q
\cdot  X^{\nu}Y^\mu \nonumber \\
&=&\frac{RA(X, Y)(Y+A(X, Y))}{1-RY-RA(X, Y)}+\frac{RA(X, Y)(X+A(X, Y))}{(1-RX-RA(X, Y))(1-RY-RA(X, Y))}. \nonumber
\end{eqnarray}

Similarly, $|h^{(i)}_{2,jk,nm}(x_j)|$ is bounded by the coefficient of $X^nY^m$ in the expansion of
\begin{eqnarray}
&&\sum_{(\nu, \mu)\geq(2,0)}MR^{\nu+\mu}
\cdot(X+A(X, Y))^\nu\cdot (Y+A(X, Y))^\mu\nonumber \\
&=&\frac{MR^2(X+A(X, Y))^2}{(1-RX-RA(X, Y))(1-RY-RA(X, Y))}. \nonumber
\end{eqnarray}
Thus, from \cite[Lemma 3]{U} and the functional equation (\ref{eq:funceq_A}), 
we can choose the solutions $\{G_j^{(n, m)}\}$ and $\{Q_j^{(n, m)}\}$ of the $\delta$-equations 
(\ref{eq:delta-eq_1}) and (\ref{eq:delta-eq_2}) with 
$\max_j\sup_{U_j}|G_j^{(n, m)}|\leq A_{n, m}$ and $\max_j\sup_{U_j}|Q_j^{(n, m)}|\leq A_{n, m}$ respectively.
\end{proof}

Let $\{G^{(\nu, \mu)}_j\}$, $\{Q^{(\nu, \mu)}_j\}$ be 
the coefficients defined by Lemma \ref{lem:Ueda_codim2_key}. 
From the inverse function theorem, 
we obtain the solutions $\{v_j\}$ and $\{\zeta_j\}$ of the functional equation (\ref{eq:main_funceq}). 
Again by comparing two expansions of $(t_{jk}w_{k},s_{jk}z_{k})$ as 
(\ref{eq:funceq_1}) and (\ref{eq:funceq_2}) 
{\it without assuming} (\ref{eq:eq_2}), 
it follows that these solutions satisfy the equation (\ref{eq:eq_2}). 
The proof is completed.
\qed

\begin{remark}\label{rmk:case_E_1}
Let us denote by $\mathcal{E}_1(C)$ the set of all elements $F\in{\rm Pic}^0(C)$ which satisfies the condition $|\log d(\mathbb{I}_C, F^{n})|=O(\log n)$ as $n\to\infty$, 
where $\mathbb{I}_T\in{\rm Pic}^0(C)$ is the trivial line bundle and $d$ is an invariant distance of ${\rm Pic}^0(C)$ ($\mathcal{E}_1(C)$ does not depend on the choice of $d$, see \cite[\S 4.1]{U}). 
Theorem \ref{thm:ueda_nbhd} also holds when $N_{S/X}|_C\cong N_{C/S}$ and $N_{S/X}|_C, N_{C/S}\in \mathcal{E}_1(C)$, which gives a corrected version of \cite[Theorem $(ii)$]{K}. 
\end{remark}


\section{Proof of Theorem \ref{thm:main} and Theorem \ref{thm:main_higher_dim}}
\label{proof of theorem}

\subsection{Proof of Theorem \ref{thm:main_higher_dim}}
First we prove Theorem \ref{thm:main_higher_dim}. 
The outline of the proof is based on that of \cite[Theorem 1]{B}. 
We use the result on the $U(1)$-flatness in Theorem \ref{thm:ueda_nbhd} (we do not use the result on the existence of a transversal). 

We will apply Theorem \ref{thm:ueda_nbhd} 
instead of the original one \cite[Theorem 3]{U}. 
We lead to the contradiction 
by assuming that there exists a Levi-flat $C^\infty$-embedding $(M, \mathcal{F})\to X$. 
By shrinking $X$ to an open tubular neighborhood of $C$ if necessary, 
we may assume that $L$ is embedded in $X$.
First we will check the conditions to apply Theorem \ref{thm:ueda_nbhd}. 

\begin{lemma}\label{lem:key}
By shrinking $X$ to an open tubular neighborhood of $C$ if necessary, 
the followings hold: \\
$(a)$ $N_{L/X}$ is holomorophically trivial,  \\
$(b)$ the triple $(C, L, X)$ is of infinite type (in the sense of Definition \ref{def:infintype}), and \\
$(c)$ the holomorphic function $f$ in (iii) of Theorem \ref{thm:main_higher_dim} is of extension type infinity. 
\end{lemma}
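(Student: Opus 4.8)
The plan is to verify the three items separately, observing that together they are exactly the hypotheses of Theorem \ref{thm:ueda_nbhd} applied to the triple $(C,L,X)$ with $S=L$; the single engine driving all three is the jet extension property of Proposition \ref{prop:jet_ext_welldef}, which takes over the role that compactness of $L$ plays in the classical theory. Throughout I work after shrinking $X$ to a tubular neighborhood of $C$, so that $L$ is a connected neighborhood of $C$ embedded in $X$ together with its coordinate data. For $(a)$ I would simply invoke Proposition \ref{prop:BAppend}. The holonomy $\mathcal{H}(\mathcal{U}\cap L)\cong\mathbb{Z}$ is generated by a single $C^\infty$-flat contracting germ, and every power (and the inverse) of a germ tangent to the identity to infinite order is again $C^\infty$-flat, so the whole holonomy group is $C^\infty$-flat and in particular $C^1$-flat. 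Hence Proposition \ref{prop:BAppend} applies, giving a holomorphic defining system $\{(W_j,w_j)\}$ for the plaques of $L$ with $N_{L/X}$ holomorphically trivial, i.e. $t_{jk}\equiv 1$.

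For $(b)$ I would first establish the jet extension property for $(L,X)$. Conditions $(i)$ and $(ii)$ of Theorem \ref{thm:main_higher_dim} supply precisely the $C^2$-flatness of the holonomy and the $C^\infty$ leafwise holomorphic retraction $p$ demanded by Proposition \ref{prop:jet_ext_welldef}, so $(L,X)$ satisfies the $n$-jet extension property for every $n$. Next, Barrett's construction (Proposition \ref{prop:BAppend} with $r=\infty$) furnishes a system $\{(W_j,w_j)\}$ with $w_k-w_j=O(w_j^{N})$ for every $N$; this system is of type $n$ for all $n$, and all its higher coefficients $f_{jk}^{(n+1)}$ vanish, so $u_n(L,X;\{w_j\})=0$ for every $n$. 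Combining this with Lemma \ref{lem:jet_ext_welldef_unm}, whose hypotheses (the $n$-jet extension property and $t_{jk}\equiv 1$) now hold, the condition ``$u_n=0$'' is independent of the chosen system, so the pair $(L,X)$ is of infinite type. Finally, Lemma \ref{lemma_well-def_of_u_n_m} gives that ``$u_{n,m}=0$'' is independent of the system of type $(n,m)$, and Remark \ref{rmk:un_unm} then promotes the infinite type of the pair to the infinite type of the triple $(C,L,X)$ in the sense of Definition \ref{def:infintype}.

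For $(c)$ the extra ingredient needed is $s_{jk}\equiv 1$. Since ${\rm div}(f)=C$, the multiplicity $\lambda={\rm mult}_Cf$ equals $1$, so $y_j:=f|_{V_j}$ is a globally consistent holomorphic function cutting out $C$ with multiplicity one; consequently $N_{C/L}$ is holomorphically trivial and we may take $s_{jk}\equiv 1$ (in particular $N_{C/L}$ is torsion). With the $n$-jet extension property, $t_{jk}\equiv 1$, and $s_{jk}\equiv 1$ all available, Lemma \ref{lem:weldef_v_n_m_z_j} applies for every $n\geq 1$ and every $m\geq 0$, yielding $v_{n,m}(C,L,X;\{z_j\})=0$ for every extension $\{z_j\}$ of type $(n,m)$. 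By Definition \ref{def:infinexttype} this is exactly the assertion that $f$ (equivalently $\{y_j\}$) is of extension type infinity.

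I expect the genuine obstacle to be $(b)$, namely forcing the pair $(L,X)$ to be of infinite type while $L$ is \emph{not} compact. In Ueda's original setting, compactness of $S$ yields $H^0(S,\mathcal{O}_S)=\mathbb{C}$ and through it both the surjectivity of restriction and the well-definedness of ``$u_n=0$''. Here that function is discharged entirely by the jet extension property, and its verification (Proposition \ref{prop:jet_ext_welldef}) is the only place where the leafwise holomorphic retraction and the $C^2$-flatness of the holonomy are used in an essential way. Once that property is in hand, the remaining work is the bookkeeping of assembling the earlier lemmas in the correct order, together with the elementary observation that ${\rm div}(f)=C$ trivializes $N_{C/L}$.
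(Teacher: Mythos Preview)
Your proposal is correct and follows essentially the same route as the paper's proof: Proposition \ref{prop:BAppend} for $(a)$ and to supply Barrett's system of arbitrary type; Proposition \ref{prop:jet_ext_welldef} for the jet extension property; Lemma \ref{lemma_well-def_of_u_n_m} together with Remark \ref{rmk:un_unm} for $(b)$; and Lemma \ref{lem:weldef_v_n_m_z_j} (with $t_{jk}\equiv 1$, $s_{jk}\equiv 1$) for $(c)$. The only cosmetic difference is that in $(b)$ you pass explicitly through the pair $(L,X)$ being of infinite type via Lemma \ref{lem:jet_ext_welldef_unm} before invoking Remark \ref{rmk:un_unm}, whereas the paper observes directly that Barrett's system already has $u_{n,m}(C,L,X;\{w_j\})=0$ and then appeals to well-definedness; both reach the same conclusion by the same ingredients.
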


\begin{proof}
Since the holonomy of the foliation $\mathcal{F}$ is $C^\infty$-flat along $L$, 
it follows that the normal bundle $N_{L/X}$ is holomorophically trivial and that there exists a system $\{w_j\}$ of type $n+1$ for each integer $n\geq 1$, see Proposition \ref{prop:BAppend}. 
Thus the obstruction class $u_{n, m}(C, L, X; \{w_j\})$ vanishes for each integer $n, m$. 
It follows from Lemma \ref{lemma_well-def_of_u_n_m} and Proposition \ref{prop:jet_ext_welldef} that $u_{n, m}(C, L, X)=0$ holds for {\it any} system of type $(n, m)$. 
Hence, the triple $(C, L, X)$ is of infinite type, see also Remark \ref{rmk:un_unm}. 
The assertion $(c)$ follows from Proposition \ref{prop:BAppend}, 
Lemma \ref{lem:weldef_v_n_m_z_j}, 
and Proposition \ref{prop:jet_ext_welldef}. 
\end{proof}

From Lemma \ref{lem:key} and Theorem \ref{thm:ueda_nbhd}, 
we may assume that $\mathcal{O}_X(L)$ is $U(1)$-flat, in particular, $t_{jk}\equiv 1$. 
Thus, we can take a global holomorphic defining function $h\colon X\to \mathbb{C}$ of $L$. 
More precisely, we can take $h$ as follows: 

\begin{lemma}\label{lem:function_h}
By shrinking $X$ to an open tubular neighborhood of $C$ if necessary, 
there exists a global holomorphic defining function $h\colon X\to \mathbb{C}$ of $L$ such that 
$d({\rm Re}\,h|_M)(x)\not=0$ holds for each point $x\in L$. 
\end{lemma}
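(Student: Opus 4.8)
The plan is to produce $h$ not as an arbitrary global defining function but as the specific function coming out of Theorem \ref{thm:ueda_nbhd}, fed with the Barrett--Fornaess defining system as its initial data. Since the holonomy along $L$ is $C^\infty$-flat, Proposition \ref{prop:BAppend} supplies a system $\{(W_j,w_j)\}$ of holomorphic defining functions for the plaques of $L$ together with leafwise-constant defining functions $\{(\mathcal{U}_j,u_j)\}$ for $L$ in $M$ satisfying conditions (i)--(iv) of \S\ref{section:Barrett-Fornaess} with $r=\infty$; in particular $t_{jk}\equiv 1$, $({\rm Im}\,w_j)|_{\mathcal{U}_j}=o(|w_j|^{r})$ and $({\rm Re}\,w_j)|_{\mathcal{U}_j}=u_j+o(u_j^{r})$ for every $r$. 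This $\{w_j\}$ is of type $(1,0)$, so I would use it as the starting system in the proof of Theorem \ref{thm:ueda_nbhd}. Since Lemma \ref{lem:key} guarantees the hypotheses of that theorem, the Schr\"oder construction then yields defining functions $\{v_j\}$ with $t_{jk}v_k=v_j$; because $t_{jk}\equiv 1$ these glue to a single global holomorphic defining function $h$ of $L$ on a neighborhood of $C$, with $h|_{W_j}=v_j$.

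The key point is that $h$ agrees with the Barrett--Fornaess system to first order along $L$. Indeed, in the functional equation \eqref{eq:main_funceq} every correction term $G_j^{(\nu,\mu)}v_j^{\nu}\zeta_j^{\mu}$ has $v_j$-degree $\nu\ge 2$, so $w_j=v_j+O(v_j^{2})$ and hence $v_j=w_j+O(w_j^{2})$ near $L=\{w_j=0\}$. Differentiating $v_j-w_j$ at a point $x\in L$, where $v_j=w_j=0$, every term of the correction vanishes (each carries a factor $v_j^{\nu-1}$ with $\nu-1\ge 1$ in its differential), so $dh(x)=dv_j(x)=dw_j(x)$ as covectors on $T_xX$. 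Restricting to $T_xM$ gives $d(h|_M)(x)=d(w_j|_{\mathcal{U}_j})(x)$ and therefore $d({\rm Re}\,h|_M)(x)=d({\rm Re}\,w_j|_{\mathcal{U}_j})(x)$.

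It then remains to evaluate $d({\rm Re}\,w_j|_{\mathcal{U}_j})$ on $L$ using the flatness. Since the error terms in (iii) and (iv) are $o(u_j^{r})$ for all $r$, hence $C^\infty$-flat along $L=\{u_j=0\}$, their differentials vanish at points of $L$; consequently $d({\rm Re}\,w_j|_{\mathcal{U}_j})(x)=du_j(x)$ for every $x\in L$. As $u_j$ is a defining function for $L$ in $M$, its differential $du_j(x)$ is nonzero: it is nonzero in the one-dimensional transverse direction $\mathbb{R}\xi$, where $T_xM=T_xL\oplus\mathbb{R}\xi$, while $dh(x)|_{T_xL}=0$ because $h$ is holomorphic and vanishes on the complex submanifold $L$. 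Combining these, $d({\rm Re}\,h|_M)(x)=du_j(x)\neq 0$ for each $x\in L$, which is the assertion.

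I expect the main obstacle to be the bookkeeping in the first two paragraphs: one must verify that the output of Theorem \ref{thm:ueda_nbhd} can legitimately be built on top of the Barrett--Fornaess system, so that $h$ and $w_j$ share their $1$-jet along $L$, rather than on an unrelated type-$(1,0)$ system. A generic global defining function $h=\psi\cdot w_j$ would only yield $d({\rm Re}\,h|_M)(x)=c(x)\,{\rm Re}(\psi(x))\,du_j(x)$ and could fail the conclusion wherever the nonvanishing holomorphic factor $\psi(x)$ is purely imaginary; choosing the initial data as above forces $\psi\equiv 1$ on $L$ and removes this difficulty. Once this first-order matching is secured, the remaining computation is routine given the flatness of the (iii)--(iv) error terms.
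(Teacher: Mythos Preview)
Your proposal is correct and takes essentially the same approach as the paper: the paper also feeds the Barrett--Forn\ae ss system $\{(W_j,w_j)\}$ from Proposition~\ref{prop:BAppend} into the Schr\"oder construction of Theorem~\ref{thm:ueda_nbhd}, reads off $w_j=h+\sum_{\nu\ge 2}G_j^{(\nu,\mu)}h^{\nu}\zeta_j^{\mu}$, and concludes $d({\rm Re}\,h|_M)(x)=d({\rm Re}\,w_j|_M)(x)\neq 0$ for $x\in L$. Your third paragraph makes explicit, via conditions (iii)--(iv), why $d({\rm Re}\,w_j|_M)(x)=du_j(x)\neq 0$, a point the paper simply asserts.
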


\begin{proof}
Let $\{(W_j, w_j)\}$ be the system of type $r=2$ as in Proposition \ref{prop:BAppend}. 
In the last part of the proof of Theorem \ref{thm:ueda_nbhd}, 
we solved the functional equation 
\[
w_j=h+\sum_{\nu=2}^\infty\sum_{\mu=0}^\infty G_{j}^{(\nu, \mu)}\cdot h^\nu\zeta_j^\mu
\]
to construct a global function $h$. 
From this equation, we obtain that 
\[
d({\rm Re}\,h|_{W_j \cap M})(x)=d({\rm Re}\,w_j|_{W_j \cap M})(x)\not=0
\] 
holds for each $x\in L$. 
\end{proof}

Take a function $h$ as in Lemma \ref{lem:function_h}. 
Fix a leaf $L'\in\mathcal{F}$ which accumulates on $L$ and set $A:=p^{-1}(E)\cap L'$, where $E\subset C$ is an elliptic curve as in Theorem \ref{thm:main_higher_dim}. 
It follows from the holonomy condition that $A$ is diffeomorphic to an annulus 
which is accumulating on $E$. 
From Lemma \ref{lem:function_h}, ${\rm Re}\,h|_A$ is a {\it positive} harmonic function which tends to $0$ on one of the boundaries. 
Let $A$ be biholomorphic to the punctured disc. 
As compactfying $A$ to the unit disk and extending $h$, 
we can apply the maximal principle to $-{\rm Re}\,h|_A$ to lead to the contradiction \cite{B}. 
If $A$ is biholomorphic to an annulus, by considering the Laurent expansion, we can conclude that $h|_A\equiv0$ holds, 
which contradicts Lemma \ref{lem:function_h}. 
This completes the proof of Theorem \ref{thm:main_higher_dim}. 
\qed

\subsection{Proof of Theorem \ref{thm:main}}
When ${\rm div}(f)=C$, Theorem \ref{thm:main} directly follows from Theorem \ref{thm:main_higher_dim}. 
In general case, it is easily observed that $f$ defines an elliptic fibration containing $C$ as a singular fiber. 
By replacing $C$ with a general fiber, we can apply Theorem \ref{thm:main_higher_dim} to show the non-embeddability. 
\qed

\section{Some examples}\label{section:eg}
In this section, we give some examples. 
The following Example \ref{eg:cor_proof} shows Corollary \ref{cor:main}. 
\begin{example}\label{eg:reeb_times_C}
Let $(S^3, \mathcal{F}_{\rm Reeb}, J_{\mathcal{F}})$ be the Reeb foliation in Example \ref{eg:reeb} 
and $\Sigma$ a Riemann surface  (possibly non-compact). 
Consider a Levi-flat manifold $(S^3\times \Sigma, \mathcal{G}, J_{\mathcal{G}})$
where $\mathcal{G}=\{L\times \Sigma~|~L\in \mathcal{F}_{\rm Reeb}\}$. 
The map $\widetilde{R}\setminus\{z=0\}\ni(z, u)\mapsto z\in\mathbb{C}^*$ induces a retraction $p$ 
as in the condition (ii) of Theorem \ref{thm:main}.
The other conditions also obviously hold. 
Therefore, this Levi-flat manifold does not admit 
a $C^{\infty}$-embedding into any complex 3-manifolds. 
Note that this example is not covered 
by \cite{B}, \cite{BI} and \cite{D} if $\Sigma$ is non-compact. 
The authors do not know 
if $(S^3, \mathcal{F}_{\rm Reeb}, J_{\mathcal{F}})$ admits a Levi-flat $C^{\infty}$-embedding into a complex 3-manifold  or higher dimensional one. 
The above example showed that 
there exist no such embeddings of $(S^3, \mathcal{F}_{\rm Reeb}, J_{\mathcal{F}}) \times \mathbb{C}$ (or $\mathbb{D}$) into complex 3-manifolds. 
\end{example}

\begin{remark}
This also holds for turbulized Levi-flat manifolds instead of the Reeb foliation. 
The turbulization procedure for Levi-flat manifolds was introduced in \cite{HM}. 
\end{remark}


For our non-embeddability results, the assumption 
on the existence of a holomorphically embedded torus is essential. 
Actually, there exists a Levi-flat $5$-manifold 
which is homeomorphic to $(S^3, \mathcal{F}_{\rm Reeb}, J_{\mathcal{F}}) \times \mathbb{C}$ 
and which is embeddable in $\mathbb{C}^3$, see \cite[\S 9.1]{FT}. 
In this example, the torus is totally real and the holonomy is $C^{r}$-flat (not $C^{\infty}$-flat). 

\begin{example}
Let $(R,\mathcal{F},J_{\mathcal{F}})$ be the $5$-dimensional Reeb component 
described in Example \ref{eg:reeb}. 
The boundary leaf $L$ is a $2$-dimensional Hopf manifold 
and it contains an elliptic curve 
\[
C=(\mathbb{C}\times\{0\}\times \{0\}) \backslash \{(0',0)\}/\langle\lambda \rangle^{\mathbb{Z}}.
\] 
Consider the map $f\colon L\to \mathbb{C}P^{1}$ induced by the map 
$\mathbb{C}^2\backslash \{0'\} \ni (z_1, z_2)\mapsto [z_1, z_2]\in \mathbb{C}P^{1}$. 
Then $(z_2/z_1)\circ f$ is a global holomorphic defining function of $C$ on its neighborhood. 
It follows from Theorem \ref{thm:main} that the Reeb component $(R,\mathcal{F},J_{\mathcal{F}})$ does not admit 
a Levi-flat $C^{\infty}$-embedding into any complex $3$-manifolds.
Our result shows that a neighborhood of $C$ can not be embeddable (cf. \cite{D}).
\end{example}

\begin{example}\label{eg:cor_proof} Let $C$ be an elliptic curve and $L:=C\times\mathbb{C}$. 
Take a representation $\rho:\pi_1(L) \to \textrm{Diff}^{\infty}_{+}(\mathbb{R})$. 
Assume that
$\rho (\alpha)$ has a $C^{\infty}$-flat contracting fixed point at $0$ 
and $\rho (\beta)$ is the identity 
for generators $\alpha, \beta \in \pi_{1}(L)$. 
After usual suspension procedure by $\rho$, 
we obtain a Levi-flat manifold $(M_{1},\mathcal{F}_{1}, J_{\mathcal{F}_{1}})$
which is a foliated $\mathbb{R}$-bundle 
and is diffeomorphic to $T^2\times \mathbb{R}^2\times \mathbb{R}$. 
Obviously, this example satisfies the assumption in Theorem \ref{thm:main}. 
Therefore, this is Levi-flat $C^{\infty}$-nonembeddable into any complex 3-manifolds. 
On the other hand, in \cite[Example 1]{BI}, Barrett and Inaba gave an example 
$(N, \mathcal{G},J_{\mathcal{G}})$
which admits a Levi-flat $C^{\infty}$-embedding into $\mathbb{C}^{2}$ 
(Precisely speaking, we need to slightly modify it so as to be an embedding). 
In fact, 
the $C^{\infty}$ Levi-flat hypersurface in $\mathbb{C}^2$ is obtained by the map 
$\Psi:\{(z,t)\in \mathbb{C}\times \mathbb{R}~|~\textrm{Re}~z > 0,~0< \textrm{Im}~z < 1 \}\to \mathbb{C}^2$ such that
\[
  \Psi(z,t) = \begin{cases}
    (e^{2\pi\sqrt{-1}z},f^{-1}(f(t)+z)) & (t>0) \\
    (e^{2\pi\sqrt{-1}z},t) & (t\leq 0),
  \end{cases}
\]
where $f(t)=e^{\frac{1}{t}}$. The leaves consist of annuli and planes. 
It follows by the construction that the foliation has a $C^{\infty}$-flat (one sided) contracting holonomy 
along the punctured disk at $t=0$.
Then we obtain a Levi-flat $5$-manifold 
$(M_{2},\mathcal{F}_{2}, J_{\mathcal{F}_{2}}):=(N,\mathcal{G}, J_{\mathcal{G}})\times (\mathbb{C}^*,J_{st})$
which is Levi-flat $C^{\infty}$-embeddable in $\mathbb{C}^{3}$. 
By choosing a suitable representation $\rho$ in the above construction, 
we can get a non-embeddable example $(M_{1},\mathcal{F}_{1}, J_{\mathcal{F}_{1}})$
which is diffeomorphic to $(M_{2},\mathcal{F}_{2}, J_{\mathcal{F}_{2}})$ as $C^{\infty}$-foliated manifolds,  
so that the Corollary \ref{cor:main} follows. 
\end{example}
\vskip3mm
{\bf Acknowledgment. } 
The first author is supported by the Grant-in-Aid for Scientific Research (KAKENHI No.25-2869). 



\begin{thebibliography}{99}
\bibitem[AB]{AB} \textsc{M. Adachi, J. Brinkschulte}, 
Curvature restrictions for Levi-flat real hypersurfaces in complex projective planes, 
Ann. Inst. Fourier (Grenoble) {\bf 65} (2015), no. 6, 2547--2569.

 \bibitem[B]{B} \textsc{D. E. Barrett}, Complex analytic realization of Reeb's foliation of $S^3$, Math. Z., {\bf 203} (1990), 355--361. 

 \bibitem[BF]{BF} \textsc{D. E. Barrett, J. -E. Forn\ae ss}, On the smoothness of Levi-foliations. Publ. Mat., {\bf 32} (1988), 171--177.

 \bibitem[BI]{BI} \textsc{D. E. Barrett, T. Inaba}, On the Topology of Compact Smooth Three-Dimensional Levi-Flat Hypersurfaces, 
J. Geom. Anal., {\bf 2} (1992), no. 6, 489--497. 

\bibitem[Br]{Br} \textsc{M. Brunella}, On the dynamics of codimension one holomorphic foliations with ample
normal bundle, Indiana. Univ. Math. J. {\bf 57} (2008), 3101--3113.

\bibitem[BLM]{BLM} \textsc{C. Bonatti, R. Langevin, R. Moussu}, 
Feuilletages de CP(n): de l'holonomie hyperbolique pour les minimaux exceptionnels. 
Inst. Hautes \'{E}tudes Sci. Publ. Math. No. {\bf 75} (1992), 123--134.

\bibitem[C]{C} \textsc{D. Cerveau}, Minimaux des feuilletages alg\'{e}briques de $\mathbb{C}$P$^n$, 
Ann. Inst. Fourier {\bf 43}, 1535--1543 (1993).

\bibitem[CC]{CC} \textsc{A. Candel, L. Conlon}, Foliations I and II,  American Mathematical Society,
2003. 

\bibitem[CLPT]{CLPT} \textsc{B. Claudon, F. Loray, J. V. Pereira, F. Touzet}, Compact leaves of codimension one holomorphic foliations on projective manifolds, arXiv:math/1512.06623. 

\bibitem[CLS]{CLS} \textsc{C. Camacho, A. Lins Neto, P. Sad}, 
Minimal sets of foliations on complex projective spaces,  
Inst. Hautes \'{E}tudes Sci. Publ. Math. No. {\bf 68} (1988), 187--203.

\bibitem[D]{D} \textsc{G. Della Sala}, Non-embeddability of certain classes of Levi flat manifolds, Osaka J. Math. {\bf 51} (2014), 161--169. 

\bibitem[Der]{Der}\textsc{B. Deroin},
Hypersurfaces Levi-plates immerg\'{e}es dans les surfaces complexes de courbure positive, 
C. R. Math. Acad. Sci. Paris {\bf 337} (2003), no.12, 777--780.

\bibitem[FT]{FT} \textsc{F. Forstneri\v{c} and C. Laurent-Thi\'ebaut},  Stein compacts in Levi-flat hypersurfaces, Trans. Amer. Math. Sor. {\bf 360} (2008), 307--329. 

\bibitem[HM]{HM} \textsc{T. Horiuchi, Y. Mitsumatsu}, 
 Reeb components with complex leaves and their symmetries I : 
 The automorphism groups and Schr\"{o}der's equation on the half line, arXiv:1605.08977. 

\bibitem[KS]{KS} \textsc{K. Kodaira and D. C. Spencer}, A theorem of completeness o f characteristic systems of
complete continuous systems, Amer. J. Math., {\bf 81} (1959), 477-500. 

\bibitem[K]{K} \textsc{T. Koike}, Toward a higher codimensional Ueda theory, Math. Z., Volume 281, Issue 3 (2015), 967--991. 
 
\bibitem[LN]{LN} \textsc{A. Lins-Neto}, A note on projective Levi flats and minimal sets of algebraic foliations, 
 Ann. Inst. Fourier (Grenoble) {\bf 49} (1999), 1369--1385.                                                                                                                                                                                                                                             

\bibitem[MV]{MV} \textsc{L. Meersseman, A. Verjovsky}, 
On the moduli space of certain smooth codimension-one foliations of the 5-sphere by complex surfaces, 
J.Reine Angew. Math. {\bf 632}, (2009), 143--202.

\bibitem[N]{N} \textsc{A. Neeman}, Ueda theory: theorems and problems., Mem. Amer. Math. Soc. {\bf 81} (1989), no. 415. 

\bibitem[Ne]{Ne} \textsc{S. Nemirovski}, Stein domains with Levi-flat boundaries in compact complex surfaces, 
Math. Notes {\bf 66} (1999), 522--525.

\bibitem[Oh1]{Oh1} \textsc{T. Ohsawa}, Levi flat hypersurfaces in complex manifolds, Complex analysis and digital geometry, 
Acta Univ. Upsaliensis Skr. Uppsala Univ. C Organ. Hist., {\bf 86}, Uppsala Universitet, Uppsala, (2009), 223--231

\bibitem[Oh2]{Oh2} \textsc{T. Ohsawa}, A survey on Levi flat hypersurfaces, Complex Geometry and Dynamics Volume 10 of the series Abel Symposia, 211--225. 

\bibitem[Oh3]{Oh3} \textsc{T. Ohsawa}, Nonexistence of certain Levi flat hypersurfaces in K\"{a}hler manifolds 
from the viewpoint of positive normal bundles, Publ. RIMS Kyoto Univ. {\bf 49} (2013), 229--239.

\bibitem[S]{Si} \text{C. L. Siegel}, Iterations of analytic functions, Ann. of Math., {\bf 43} (1942), 607--612. 

\bibitem[Si]{Siu} \textsc{Y. Siu}, 
Nonexistence of smooth Levi-flat hypersurfaces in complex projective spaces of dimension $\geq$ 3, 
Ann. of Math. (2) {\bf 151} (2000), no. 3, 1217--1243.

\bibitem[U]{U} \textsc{T. Ueda},  On the neighborhood of a compact complex curve with topologically trivial normal bundle, J. Math. Kyoto Univ., {\bf 22} (1983), 583--607. 
\end{thebibliography}
\end{document}